     \crefname{axiom}{axiom}{axioms}			
     \crefname{claim}{claim}{claims}			
    \newtheorem{theorem}{Theorem}[section]
    \newtheorem{lemma}[theorem]{Lemma}
    \newtheorem{coro}[theorem]{Corollary}
    \newtheorem{prop}[theorem]{Proposition}
    \newtheorem{defn}[theorem]{Definition}
    \newtheorem{obs}[theorem]{Observation}
    \newtheorem{question}[theorem]{Question}
   \theoremstyle{remark}	
    \newtheorem{remark}[theorem]{Remark} 
    \newtheorem{ex}[theorem]{Example}
    \newcounter{thmcount}
\newcommand*{\numberedtheorem}[3]{\theoremstyle{plain}\newtheorem*{makethm\thethmcount}{#1}
    \ifthenelse{\equal{#2}{}}{\begin{makethm\thethmcount}#3\end{makethm\thethmcount}\stepcounter{thmcount}}
    {\begin{makethm\thethmcount}[#2]#3\end{makethm\thethmcount}\stepcounter{thmcount}}}
    \newcommand{\mf}[1]{\mathfrak{#1}}
    \newcommand{\F}{\mathbb{F}}
    \newcommand{\R}{\mathbb{R}}
    \newcommand{\Z}{\mathbb{Z}}
    \newcommand{\om}{\omega}
    \newcommand{\ra}{\rightarrow}
    \renewcommand{\iff}{\Leftrightarrow}
    \newcommand{\bigset}[1]{\mathrm{Big}\pns{#1}}
    \newcommand{\diam}{\mathrm{diam}}
    \newcommand{\dist}{d}
    \newcommand{\Isom}{\text{Isom}}
    \newcommand{\tand}{\text{ and }}
    \newcommand{\stab}[1]{\text{Stab}\pns{#1}}
    \newcommand{\Sym}{\operatorname{Sym}}
    \newcommand{\ceil}[1]{\left\lceil #1 \right\rceil}
    \newcommand{\pns}[1]{\left( #1 \right)}
    \newcommand{\braces}[1]{\left\{ #1 \right\}}
    \newcommand{\abs}[1]{\left| #1 \right|}
    \newcommand{\abrackets}[1]{\left\langle #1 \right\rangle}
    \renewcommand{\bar}{\overline}
    \newcommand{\defeq}{\vcentcolon=}
     \newcommand*{\mc}[1]{\mathcal{#1}}
     \def\gener{X}
     \def\nest{\sqsubseteq}
     \def\trans{\pitchfork}
     \def\isom{\cong}
	 \def\squiggly{\bgroup \markoverwith{\textcolor{red}{\lower3.5\p@\hbox{\sixly \char58}}}\ULon}
	 \def\PiX{\prod_{W \in \mf{S}}2^{\fontact W}}
    \newcommand{\B}{\operatorname{Big}}
    \newcommand{\X}{\mathcal X}
    \newcommand{\s}{\mathfrak S}
    \newcommand{\fontact}{{\mathcal C}}
    \newcommand{\Aut}{\operatorname{Aut}}
    \newcommand{\cuco}[1]{{\mathcal #1}}
    \newcommand{\orth}{\bot}
\newcommand{\propnest}{\sqsubsetneq}
\newcommand{\tsh}[1]{\left\{\kern-.7ex\left\{#1\right\}\kern-.7ex\right\}}
\newcommand{\Tsh}[2]{\tsh{#2}_{#1}}
\newcommand{\ignore}[2]{\Tsh{#2}{#1}}
\newcommand{\cal}{\mathcal}
    \newcommand{\domains}{\mathfrak{S}}
    \newcommand{\genset}{\gener}
    \newcommand{\displacement}[2]{\tau_{#1} \pns{ #2 }}
    \newcommand{\displacementBound}{\tau_0}
    \newcommand{\Ball}[1]{X^{#1}}	
    \renewcommand{\hat}{\widehat}
\theoremstyle{definition}
\newtheorem{definition}[theorem]{Definition}
\begin{document}

   \title{Hierarchically hyperbolic groups and uniform exponential growth}
   
   \author{Carolyn R. Abbott}
	\address{Department of Mathematics, Brandeis University, Waltham, MA 02453}
	\email{carolynabbott@brandeis.edu}
	\author{Thomas A. Ng}
	\address{Mathematics Department, Technion -- Israel Institute of Technology, Haifa, 32000 Israel}
	\email{thomas.ng@campus.technion.ac.il}
	\author{Davide Spriano}
	\address{Mathematical Institute, University of Oxford, Oxford OX2 6GG}
	\email{spriano@maths.ox.ac.uk}

	\date{}
	
	        \maketitle
	        
        \centerline{
\textit{
With an appendix by Radhika Gupta and Harry Petyt}
}

\begin{abstract}
	We give several sufficient conditions for uniform exponential growth in the setting of virtually torsion-free hierarchically hyperbolic groups.  For example, any hierarchically hyperbolic group that is also acylindrically hyperbolic has uniform exponential growth.
	In addition, we provide a quasi-isometric characterization of hierarchically hyperbolic groups without uniform exponential growth. 
	To achieve this, we gain new insights on the structure of certain classes of hierarchically hyperbolic groups. 
	Our methods give a new unified proof of uniform exponential growth for several examples of groups with notions of non-positive curvature. 
	In particular, we obtain the first proof of uniform exponential growth for certain groups that act geometrically on CAT(0) cubical spaces of dimension 3 or more.
	Under additional hypotheses, we show that a quantitative Tits alternative holds for hierarchically hyperbolic groups.  
\end{abstract}

\section{Introduction}

A finitely generated group has \emph{(uniform) exponential growth} if the number of elements that can be spelled with words of bounded length grows (uniformly) exponentially fast with respect to \emph{any} finite generating set.  Exponential growth rates and uniform exponential growth rates are of interest in a broad range of areas, including differential geometry, dynamical system theory, and the theory of unitary representations (see \cite{Harpe} and citations therein). 

Gromov asked if every finitely generated group with exponential growth has uniform exponential growth.  However, this is not the case: the first example of a group with exponential growth but not uniform exponential growth was constructed by Wilson \cite{Wilson}, and additional counterexamples have since been constructed \cite{Wilson2,Bartholdi,Nek}.  However, Gromov's question is still open for finitely presented groups.

Many classes of groups are known to either be virtually nilpotent or have uniform exponential growth.  
This form of growth gap was shown for linear groups by Eskin, Mozes, and Oh \cite{EMO}; for hyperbolic groups by Koubi \cite{Koubi}; for fundamental groups of manifolds with pinched negative curvature by Besson, Coutois, and Gallot \cite{BCG}; for finitely generated subgroups of the mapping class group by Mangahas \cite{Mangahas}
and, more generally, automorphism groups of one-ended hyperbolic groups by Kropholler, Lyman, and Ng \cite{KrophollerLymanNg}; for linearly growing subgroups of $\operatorname{Out}(F_n)$ by Bering \cite{Bering}; and for groups acting without global fixed points on 2--dimensional CAT(0) cube complexes, with some generalizations to higher dimensions, by work of Gupta, Jankiewicz, and Ng \cite{GJN:CubesUEG}.  We note that the full mapping class group was known to have uniform exponential growth (via its action on homology) by Anderson, Aramayona, and Shackleton \cite{AAS}, and the torsion-free case of 2-dimensional cubical groups was shown by Kar and Sageev \cite{KarSageev}.

In the case of torsion-free cubical groups, it is remarkable that all known proofs rely heavily on the assumption that the cube complex has low dimension or that it has isolated flats, a strong form of relative hyperbolicity.  
Indeed, the authors are not aware of any general proof that works in dimensions higher than 2.  
Moreover, some of the more curious cubical groups do not act geometrically on CAT(0) cube complexes with isolated flats.  For example, the genus 2 handlebody group acts geometrically on a CAT(0) cube complex \cite{HamenstadtHensel:Handlebody2},  but Dehn twist flats can intersect along infinite subgroups, so this cube complex cannot have isolated flats.  

Our main contribution, a combination of the main body of the paper and the appendix, is the following.

\begin{theorem}\label{thm_main_and_appendix}
Let $G$ be a group virtually acting freely and cocompactly on a locally finite, finite-dimensional CAT(0) cube complex $X$, and assume that $X$ has a factor system. Then either $G$ has uniform exponential growth or $G$ is virtually abelian.
\end{theorem}

Requiring that $X$ has a factor system should not be thought of as a restrictive hypothesis, as there is no known example of a CAT(0) cube complex with a cocompact group action that does not admit a factor system.  The existence of a factor system is a natural condition with several consequences. 
For example, Durham, Hagen, and Sisto give an alternative proof of rank-rigidity for CAT(0) cube complexes with a factor system \cite[Corollary~9.24]{DurhamHagenSisto:Boundaries}, a result originally proved by Caprace and Sageev \cite{capracesageev:rank}.
Any group acting geometrically on a CAT(0) cube complex with a factor systems is a \emph{hierarchically hyperbolic group} \cite[Theorem~A]{HagenSusse}. This is a starting point for results such as \Cref{thm_main_and_appendix} and \cite[Corollary~9.24]{DurhamHagenSisto:Boundaries}.  Beyond hierarchical hyperbolicity, factor systems also appear in work of of Incerti-Medici and Zalloum \cite{IncertiZalloum} and implicitly in work of Genevois \cite{genevois:hyperbolicities}.

The main technical result of this paper is an analogous version of \Cref{thm_main_and_appendix} for the larger class of hierarchically hyperbolic groups. 
This is a large class of groups introduced by Behrstock, Hagen, and Sisto \cite{BehrstockHagenSisto:HHS2} whose  structure is similar to that of mapping class groups and CAT(0) cubical groups. This class of groups includes hyperbolic groups, mapping class groups, many (conjecturally all) CAT(0) cubical groups, fundamental groups of most 3--manifolds, and various combinations of the above groups, including direct products, certain quotients, and graph products \cite{BehrstockHagenSisto:HHS2,BerlaiRobbio}.

Hierarchically hyperbolic groups and, more generally, hierarchically hyperbolic spaces are defined axiomatically, generalizing the Masur--Minsky machinery for mapping class groups \cite{MM}.  Roughly speaking, a \emph{hierarchically hyperbolic space} (HHS) consists of a metric space $\mathcal X$ along with the following data: an index set $\s$ of \emph{domains} with three relations (nesting, transversality, and orthogonality), $\delta$--hyperbolic spaces $\fontact U$ associated to each domain $U\in \s$, and projection maps $\X\to\fontact U$ and $\fontact U\to\fontact V$ (defined for certain $U,V\in \s$) satisfying certain conditions.  We denote this entire package of information by $(\mathcal X,\s)$.    
In some sense, this set of hyperbolic spaces can be thought of as a set of coordinate spaces. We are used to understanding the space \(\mathbb{R}^n\) by associating to a point a \(n\)--tuple of elements of \(\mathbb{R}\), which is a hyperbolic space. A simplistic but useful viewpoint on hierarchically hyperbolic space is to think of the hierarchical structure as nothing but a more complicated coordinate system on the metric space $\X$.   We discuss this in more detail in \Cref{sec:coordsys}.
A \emph{hierarchically hyperbolic group} (HHG) is essentially a group whose Cayley graph is an HHS such that the action of the group on the Cayley graph is compatible with the HHS structure; we use $(G,\s)$ to denote a HHG.  

The following is a structure theorem for virtually torsion-free hierarchically hyperbolic groups providing a sufficient condition for uniform exponential growth.

\begin{theorem} \label{thm:mainthm} Let $(G,\s)$ be a virtually torsion-free hierarchically hyperbolic group.  Then either $G$ has uniform exponential growth, or there is a space \(E\) such that the Cayley graph of $G$ is quasi-isometric to \(\mathbb{Z} \times E\). 
\end{theorem}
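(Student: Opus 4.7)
The plan is to run a dichotomy based on how a torsion-free finite-index subgroup of $G$ acts on the hyperbolic spaces $\fontact U$ for $U \in \s$. First I would pass to a torsion-free $G' \le G$ of bounded index; since both uniform exponential growth and the quasi-isometry type of the Cayley graph are invariant under passing to finite-index subgroups, the theorem is unchanged by this reduction. For each $U \in \s$, the stabilizer $\mathrm{Stab}(U) \le G'$ acts by isometries on the $\delta$-hyperbolic space $\fontact U$, and the action falls into a short list of types (elliptic, lineal, focal, or general type). The goal is to show: either one can locate a pair of short elements of $G$ acting as independent loxodromics on some $\fontact U$, or one can conclude a strong structural statement about $G$.

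Next, I would carry out a Koubi--Mangahas style ping-pong argument for the growth side. Suppose there exist $U \in \s$ and $g, h \in G$ of word length at most some uniform $N = N(G)$ whose actions on $\fontact U$ are loxodromic with disjoint limit point pairs. The quantitative hyperbolic ping-pong lemma then produces an integer $k = k(\delta)$ such that $g^k$ and $h^k$ generate a non-abelian free subgroup. Pulling back to $G$ yields a free subgroup on two generators of length at most $kN$ in any finite generating set, whence a uniform lower bound of $\tfrac{\log 2}{kN}$ on the exponential growth rate. The key point is that $N$ must depend only on the hierarchical data of $G$ and not on the chosen generating set; this is where the uniform properness of the HHG action and the distance formula enter, allowing the word length estimates to be made independent of the generating set.

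If, on the other hand, no such pair of short independent loxodromics can be extracted from any $\fontact U$, then I would argue that $G$ has essentially a single direction of non-elementary action. Concretely, there should be a distinguished domain $U_0 \in \s$ giving rise to a loxodromic $\mathbb Z$-axis, while the action of $G$ on every domain not orthogonal to $U_0$ is elementary. Using the standard product regions provided by the HHG structure and the orthogonality relation, one then extracts a cocompact product region of the form $\mathbb Z \times E$, where the $\mathbb Z$-factor is the axis of a loxodromic aligned with $U_0$ and $E$ is built from the domains orthogonal to $U_0$. Cocompactness of the $G$-action on this product region, together with the HHG distance formula, yields the desired quasi-isometry $\Cay(G) \simeq \mathbb Z \times E$.

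The main obstacle is the structural half. One must rule out, uniformly, every possible source of exponential growth — not only pairs of independent loxodromics within a single $\fontact U$, but also ``diagonal'' free subgroups produced by combining short loxodromic elements coming from two distinct (e.g.\ transverse) domains — and simultaneously assemble a globally coherent splitting. The orthogonality relation supplies the natural candidate for the complement $E$, but the delicate point is that no additional factor transverse to $U_0$ can secretly contribute exponential growth. Virtual torsion-freeness is used essentially here, since finite-order elements could otherwise obstruct the identification of a genuine $\mathbb Z$-direction and the clean decomposition $\mathbb Z \times E$.
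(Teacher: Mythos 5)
Your overall architecture (pass to a torsion-free finite-index subgroup, then run a Koubi--Mangahas style dichotomy between finding uniformly short free sub(semi)groups and extracting a product structure) matches the paper's, but the proposal leaves genuine gaps at exactly the two points where the real work happens. First, your growth case only covers a pair of short elements acting as independent loxodromics on a \emph{single} $\fontact U$, and you explicitly defer the ``diagonal'' situation --- two short elements acting loxodromically on two \emph{distinct non-orthogonal} domains --- to the ``main obstacle'' paragraph without resolving it. That situation is not a side case: it is the generic one, and the paper's dichotomy (Proposition~\ref{prop: basic combinatorics}) is phrased combinatorially in terms of the big sets $\B(s)$ of the generators precisely so that the alternative to the product case is ``some $s,t\in X^{2N+1}$ have non-orthogonal domains $U\in\B(s)$, $V\in\B(t)$.'' When $U\trans V$ the paper plays ping-pong not on a hyperbolic space but on subsets of $G$ itself defined by projection distances to $\rho^V_U$ and $\rho^U_V$ (Proposition~\ref{case:TransverseDomains}), and when $U\propnest V$ it first conjugates to reduce to the transverse case (Proposition~\ref{case:ProperNesting}). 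Your sketch contains no mechanism for either, and standard hyperbolic ping-pong on one $\fontact U$ cannot substitute, since neither element need act loxodromically on the other's domain.

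Second, the structural half is asserted rather than derived. You posit a ``distinguished domain $U_0$'' and a cocompact product region, but the actual content is: (i) showing that when the big-set dichotomy fails, the set $\overline{\mc B}=X^N.\mc B$ is a finite, $G$-invariant, pairwise orthogonal collection, with a pointwise-fixing subgroup of index at most $N!$; (ii) disposing of the case $\fontact S$ unbounded via acylindricity of $G\actson\fontact S$ and Fujiwara's theorem (Proposition~\ref{prop:SinB}); (iii) showing each $\fontact U$, $U\in\overline{\mc B}$, is a quasi-line because the generators fix the endpoint pair of a short loxodromic (Proposition~\ref{prop:quasilines}); and (iv) proving that every \emph{other} unbounded domain is orthogonal to all of $\overline{\mc B}$, which requires ruling out nesting (Lemma~\ref{lem:infinitediameter} and Proposition~\ref{quasilinesNestMinimal}) and transversality (Proposition~\ref{prop:G_inv_implies_corasely_surj}) separately, before the realization theorem converts the orthogonal partition into a quasi-isometric splitting $\Z^{|\overline{\mc B}|}\times E$ (Proposition~\ref{prop:gprod}). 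Step (iv) is exactly the ``no transverse factor secretly contributes'' worry you flag, and it does not follow from orthogonality being ``the natural candidate''; it needs the invariance of $\overline{\mc B}$ plus the consistency axioms. As written, the proposal is a correct outline of the strategy but not a proof.
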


We note that the two possible outcomes in the theorem are not mutually exclusive: a simple example is given by the group \(\mathbb{Z}\times F_2\), where $F_2$ is a free group of rank two. Such a group is clearly a product of the form $\Z\times E$, but it has uniform exponential growth because it surjects onto $ F_2$.

When $G$ is a group virtually acting freely and cocompactly on a locally finite, finite-dimensional CAT$(0)$ cube complex with a factor system and $G$ is not directly decomposable, then \Cref{thm_main_and_appendix} follows immediately from \Cref{thm:mainthm}.  In the case that $G$ is directly decomposable, \Cref{thm_main_and_appendix} follows from applying \Cref{thm:mainthm} and \Cref{thm:main} from the appendix. 

\subsection{HHG with uniform exponential growth}
The first consequence of \Cref{thm:mainthm} is that if the Cayley graph of a hierarchically hyperbolic group \(G\) is not quasi-isometric to a (nontrivial) product, then \(G\) has uniform exponential growth.  We  state several corollaries giving conditions under which this is the case.   There is significant overlap in the situations covered by these corollaries: our goal is simply to highlight a wide variety of conditions that imply uniform exponential growth.

A subset $Y$ of a metric space $X$ is \emph{quasi-convex} if every $(\lambda,\varepsilon)$--quasi-geodesic in $X$ with endpoints on $Y$ is contained in a uniform neighborhood (depending on $\lambda,\varepsilon$) of the subset $Y$.  Such a subspace is sometimes referred to as Morse \cite{Cordes, CharneySultan}, strongly quasi-convex \cite{Tran}, or quasi-geodesically quasi-convex \cite{RussellSprianoTran}. In particular, if $Y$ is a quasi-geodesic satisfying this property, it is typically called Morse.

\begin{restatable*}{coro}{coroAsyCone} 
\label{cor:ac}
Every non-virtually cyclic virtually torsion-free hierarchically hyperbolic group which has an asymptotic cone containing a cut-point has uniform exponential growth.  In particular, if the Cayley graph of a virtually torsion-free hierarchically hyperbolic group $G$ contains an unbounded Morse quasi-geodesic, then $G$ has uniform exponential growth.
\end{restatable*}

One particularly nice class of hierarchically hyperbolic groups to which \Cref{cor:ac} can be applied is those which are \emph{acylindrically hyperbolic}.  The action of a group $G$ on a metric space $X$ is \emph{acylindrical} if for all $\varepsilon > 0$ there exist constants $R,N \geq 0$ such that for all $x,y\in X$ with $d(x,y)\geq R$, 
\[
	\#\{g\in G\mid d(x,gx)\leq \varepsilon \text{ and } d(y,gy)\leq \varepsilon \}\leq N.
\]  A group is \emph{acylindrically hyperbolic} if it admits a non-elementary acylindrical action on a hyperbolic space, that is, such that the limit set of the action contains at least three points.\footnote{Equivalently, a group is acylindrically hyperbolic if it is not virtually cyclic and admits an acylindrical action on a hyperbolic space with unbounded orbits.}  It is unknown if \emph{all} acylindrically hyperbolic groups have uniform exponential growth.  However, it follows from Sisto \cite{Sisto} that every acylindrically hyperbolic group contains an infinite order Morse element, that is, an infinite order element $g$ such that the quasi-geodesic $\langle g\rangle$ in the Cayley graph of $G$ is Morse. Thus we immediately obtain the following result.  

\begin{coro}\label{cor:ah}
Virtually torsion-free hierarchically hyperbolic groups which are acylindrically hyperbolic have uniform exponential growth.
\end{coro}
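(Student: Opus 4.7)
The plan is to deduce this corollary almost immediately from Corollary \ref{cor:ac} together with the result of Sisto mentioned in the paragraph preceding the statement. The structure has three ingredients that need to be combined.

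First, I would verify the two hypotheses of Corollary \ref{cor:ac}. An acylindrically hyperbolic group, by definition, admits a \emph{non-elementary} acylindrical action on a hyperbolic space, so in particular it cannot be virtually cyclic. Virtual torsion-freeness is assumed in the hypothesis of the corollary we are proving, so both hypotheses for Corollary \ref{cor:ac} are in place.

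Next, I would invoke Sisto's theorem: every acylindrically hyperbolic group contains an infinite-order element $g$ whose orbit $\langle g \rangle$ is Morse in the Cayley graph of $G$. In particular, the cyclic subgroup $\langle g \rangle$ furnishes an unbounded Morse quasi-geodesic in the Cayley graph, since it is generated by an infinite-order element.

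Finally, I would apply the ``in particular'' part of Corollary \ref{cor:ac}, which states that the existence of an unbounded Morse quasi-geodesic in the Cayley graph of a non-virtually cyclic virtually torsion-free HHG implies uniform exponential growth. This yields the desired conclusion. There is no real obstacle here; the entire content of the corollary is packaged inside Corollary \ref{cor:ac} (and, upstream of that, Theorem \ref{thm:mainthm}), and the role of Sisto's theorem is simply to produce the Morse quasi-geodesic needed to feed into it. The only thing to double-check is that ``Morse element'' in Sisto's sense coincides with ``the orbit is an unbounded Morse quasi-geodesic'' in the sense used in Corollary \ref{cor:ac}, which is exactly how the authors have defined it in the paragraph introducing Morse quasi-geodesics.
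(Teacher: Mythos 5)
Your proposal is correct and is essentially the paper's own argument: the authors also deduce the corollary immediately from Sisto's theorem producing an infinite-order Morse element together with Corollary \ref{cor:ac}. Your extra check that a non-elementary acylindrical action rules out being virtually cyclic is a sensible (if routine) verification of the hypothesis of Corollary \ref{cor:ac} that the paper leaves implicit.
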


The following gives another way of using quasi-convex subspaces to determine that $G$ is not quasi-isometric to a product with unbounded factors.  
\begin{restatable*}{coro}{coroQuasiConvex}
\label{cor:qcx}
Every virtually torsion-free hierarchically hyperbolic group which is not virtually cyclic and contains an infinite quasi-convex subgroup of infinite index has uniform exponential growth. 
\end{restatable*}

For any hierarchically hyperbolic space $(\X,\s)$, the index set $\s$ contains a domain which is largest under the nesting relation; we will always denote this domain $S$ and its associated hyperbolic space $\fontact S$.  Given a hierarchically hyperbolic group, we can use the geometry of the hyperbolic space $\fontact S$ to determine that $G$ is not quasi-isometric to a product with unbounded factors. 

\begin{restatable*}{coro}{coroNonelem}
\label{cor:CSnonelem}	
Let $(G,\s)$ be a virtually torsion-free hierarchically hyperbolic group such that $\fontact S$ is a non-elementary hyperbolic space.  Then $G$ has uniform exponential growth.
\end{restatable*}

Under the assumptions of \Cref{cor:CSnonelem}, we actually obtain more information than what is stated in  \Cref{thm:mainthm}.  We can additionally  show that $G$ satisfies a \emph{quantitative Tits alternative}.  We will make this precise in the next subsection.

\begin{ex}  
	In addition to proving uniform exponential growth for a large class of cubical groups, \Cref{thm:mainthm} gives a single, unified proof that the following groups have uniform exponential growth.
\begin{enumerate}
	\item Non-elementary virtually torsion-free hyperbolic groups.  These groups are acylindrically hyperbolic \cite{Osin}, so we may apply \Cref{cor:ah}.  Uniform exponential growth was first shown for these groups by Gromov \cite{Gromov} and Delzant \cite{Delzant:hyperbolicSubgroups} (see \cite[Theorem(vii)]{GrigorchukDLHarpe:growth} for a precise statement) and generalized by Koubi \cite{Koubi} (without the torsion-free hypothesis).
	
	\item Non-exceptional mapping class groups.  These groups are acylindrically hyperbolic  \cite{MM:hyperbolicity, Bowditch} and virtually torsion-free  \cite[Corollary~1.5]{Ivanov}.  Uniform exponential growth was first shown by Anderson, Aramayona, and Shackleton \cite{AAS}. 
	
	
	\item 
	Many orientable 3-manifold groups. Specifically, if $M$ is geometric then it suffices that $M$ admits a complete metric locally isometric to $\mathbb{H}^3$ or $\mathbb{H}^2 \times \R$.  In the non-geometric case, it suffices to have $M$  be a flip graph $3$--manifold
	or certain mixed $3$--manifolds. 
	These groups are torsion-free and acylindrically hyperbolic  \cite{MinasyanOsin}, so we may apply \Cref{cor:ah}.  The class of hierarchically hyperbolic  3-manifold groups to which our theorem applies is broader than stated here, but rather technical. For example, the manifold needs not be prime, but cannot have any Nil or Sol components (see \cite[Remark~10.2]{BehrstockHagenSisto:HHS2}).  Uniform exponential growth is already known for 3--manifold groups (see for example \cite{diCerbo:3mfldUEG} and references therein).  
	For non-geometric 3--manifolds, this follows from the action on its JSJ--tree and work of Bucher and de la Harpe \cite{BucherDLHarpe}.   
	In the geometric case, this follows from work of Besson, Courtois, and Gallot \cite{BCG} for hyperbolic 3--manifolds and from the fact that  uniform exponential growth is inherited from quotients for Seifert fibered manifolds.  
	
	\item  Graph products of virtually torsion-free hierarchically hyperbolic groups.  Such groups are hierarchically hyperbolic by  \cite{BerlaiRobbio} and virtually torsion-free by \cite[Corollary~3.4]{JS}.  When the defining graph is not a join and $G \not\cong \Z_2 * \Z_2$, the space $\fontact S$ is non-elementary by work of Berlyne and Russell \cite{BerlyneRussel:GraphProd} extending work of Berlai and Robbio \cite{BerlaiRobbio}, and thus we may apply  \Cref{cor:CSnonelem}.
	This class includes free products and direct products of virtually torsion-free hierarchically hyperbolic groups.  Uniform exponential growth for graph products of this form is known to follow from work of Bucher and de la Harpe \cite{BucherDLHarpe}, as long as the underlying graph is not complete, and from Antol\'{i}n and Minasyan in the general case \cite[Corollary~1.5]{AM}. 

    \item A virtually torsion-free tree  of hierarchically hyperbolic groups satisfying the conditions of \cite[Corollary~8.24]{BehrstockHagenSisto:HHS2} or \cite{BerlaiRobbio}. For instance, groups of the form $G_1 \ast_C G_2$, where  $G_i$ is hyperbolic and $C$ is 2--ended, are hierarchically hyperbolic (\cite{RobbioSpriano:Hierarchical}). For the standard hierarchical structure on such groups, $\fontact S$ is a tree (which is not a quasi-line), and so we may apply \Cref{cor:CSnonelem}.  Uniform exponential growth  follows for such groups by Bucher and de la Harpe \cite{BucherDLHarpe}.
\end{enumerate}
\end{ex}

So far we have only provided   conditions that are sufficient to guarantee that an HHG is not quasi-isometric to a non-trivial product, whereas  \Cref{thm:mainthm} gives a more precise characterization of the product structure.  Thus,  \Cref{thm:mainthm} allows us to conclude that certain hierarchically hyperbolic groups which are quasi-isometric to a product still have uniform exponential growth. One example is the following.

\begin{ex}[Burger-Mozes] 
\label{ex:BM} 
Consider the group $G$ constructed by Burger and Mozes in \cite{BurgerMozes} as the first example of a torsion-free simple group which acts cocompactly on the product of two trees. It is known that \(G\) is quasi-isometric to the product of two trees (which are not lines).  Moreover,  $G$ was shown to be a hierarchically hyperbolic group by Behrstock, Hagen, and Sisto \cite[Section 8]{BehrstockHagenSisto:HHS1}.  However, there is no space $E$ such that $G$ is quasi-isometric to $\mathbb{Z}\times E$.  
Indeed, such a space $E$ would have to be a quasi-tree by work of Fujiwara and Whyte \cite[Theorem~0.1]{FujiwaraWhyte} together with bounds on the asymptotic (Assouad—Nagata) dimension \cite[Theorem~4.3]{DranishnikovSmith}, \cite[Theorem~2.4]{BrodskiyDydakLevinMitra}.
Such a quasi-isometry would induce a bi-Lipschitz homeomorphism on  asymptotic cones, contradicting a result of Kapovich and Leeb on the nonexistence of certain bi-Lipschitz maps from products of two trees \cite[Corollary~2.15]{KapovichLeeb}.  
By applying \Cref{thm:mainthm}, we obtain a new proof that $G$ has uniform exponential growth.  This result also follows the from the structure of $G$ as an amalgamated free product of two free groups and work of Bucher and de la Harpe \cite{BucherDLHarpe}.  \Cref{thm:main} in the appendix gives another  proof that $G$ has uniform exponential growth.

This example can be extended to give a new proof of uniform exponential growth for all 
\emph{BMW-groups} (this terminology is introduced and described in \cite{Caprace}).  This class of groups, which generalizes the original construction of Burger and Mozes, have uniform exponential growth.  A group $G$ is a BMW-group if it acts by isometries on the product of two trees $T_1 \times T_2$ such that every element preserves the product decomposition and the action on the vertex set of $T_1\times T_2$ is free and transitive. 
\end{ex}


\subsection{A quantitative Tits alternative}  

Most known proofs of uniform exponential growth, including the proof of \Cref{thm:mainthm}, demonstrate that  one can produce a pair of elements with bounded word length with respect to \emph{any} generating set that generate a free semigroup. In light of this, one can ask under what conditions one can find a pair of uniformly short elements which freely generate an actual subgroup.  In groups which satisfy a Tits alternative, producing a free basis with bounded word length can be seen as a \emph{quantitative} Tits alternative.

In our proof of \Cref{thm:technicalmainthm}, we use  
work of Breuillard and Fujiwara \cite{BreuillardFujiwara}
to produce short elements that generate a free semigroup.  Under the additional assumption of \emph{hierarchical acylindricity}, discussed in \Cref{subsec:applications}, we can upgrade our argument using earlier work of Fujiwara \cite{Fujiwara} to produce a genuine free subgroup, showing the following quantitative Tits alternative holds for hierarchically hyperbolic groups.

\begin{restatable*}{prop}{propQuanTits} \label{thm:qtits}  Let $(G,\s)$ be a virtually torsion-free hierarchically hyperbolic group such that \(G\) is not quasi-isometric to \(\mathbb{Z} \times E\) for any metric space \(E\). 
Suppose that either \begin{enumerate}
\item $\fontact S$ is non-elementary; or
\item  $G$ is hierarchically acylindrical.
\end{enumerate}
Then for any generating set $\genset$ of $G$, there exists a free subgroup of $G$ generated by two elements whose word length with respect to $\genset$ is uniformly bounded.
\end{restatable*}

\subsection{HHGs without uniform exponential growth}
We  now turn our attention to the class of hierarchically hyperbolic groups that do not have uniform exponential growth. Since every finitely generated abelian group is hierarchically hyperbolic, this provides a large class of examples that lack even (non-uniform) exponential growth. On the other hand, HHGs are finitely presented and satisfy a Tits alternative; that is, every finitely generated subgroup of a hierarchically hyperbolic group either contains a non-abelian free group or is virtually abelian
\cite{DHS:Corrigendum}. In light of this, we ask the following question.

\begin{question}\label{question: is there a third case?}
Does there exist a hierarchically hyperbolic group that is not virtually abelian and does not have uniform exponential growth?
\end{question}
Either a positive or negative answer to this question would be of significant interest.
A positive answer would prove that all hierarchically hyperbolic groups are either virtually abelian or have uniform exponential growth, while a negative answer would provide an example of a finitely presented group which has exponential but not uniform exponential growth, answering a question of Gromov. 
Although our techniques do not allow us to answer \Cref{question: is there a third case?}, we obtain a structural classification of the cases where uniform exponential growth does not (or may not) hold. 
We obtain rather restrictive conditions on the hierarchical structure a group must satisfy in order to answer \Cref{question: is there a third case?} in the affirmative.
\begin{theorem}\label{thm: the non ueg case}
Let \(G\) be a virtually torsion-free hierarchically hyperbolic group which is not virtually abelian and does not have uniform exponential growth. 
Then there exists a \(G\)--invariant set of pairwise orthogonal domains \(\bar{\mc{B}}\) such that for each \(U \in \bar{\mc{B}}\) the space \(\fontact U\) is uniformly a quasi-line, and for each \(V \not \in \bar{\mc{B}}\) either \(\fontact V\) is uniformly bounded, or \(V \bot U\) for all \(U \in \bar{\mc{B}}\). 
\end{theorem}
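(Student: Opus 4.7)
The plan is to combine Theorem~\ref{thm:mainthm} with the structure theory of loxodromic elements in HHGs. By Theorem~\ref{thm:mainthm}, the hypothesis that $G$ does not have uniform exponential growth produces a quasi-isometry $G \sim \mathbb{Z}\times E$; since $G$ is not virtually abelian, $E$ is unbounded. After passing to a torsion-free finite-index subgroup $G_0$, one extracts an infinite-order element $z\in G_0$ whose cyclic subgroup represents the $\mathbb{Z}$-factor of this decomposition. The set $\bar{\mathcal{B}}$ will be assembled from the domains on which $z$ and its $G$-conjugates act loxodromically.

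Set $\mathcal{B} := \mathrm{Big}(z)$, the collection of $U\in\mathfrak{S}$ on which $z$ has unbounded orbit. A key feature of HHGs is that any single element acts loxodromically only on a pairwise-orthogonal collection of domains: if $U,U'\in\mathcal{B}$ were nested or transverse, the consistency and bounded geodesic image axioms would force the $z$-orbit in one of them to be bounded. Hence $\mathcal{B}$ is pairwise orthogonal. Moreover, each $\fontact U$ with $U\in\mathcal{B}$ must be uniformly a quasi-line: if some $\fontact U$ were non-elementary, then partial realization together with the orthogonality of $\mathcal{B}$ would allow one to produce a loxodromic on $\fontact U$ independent of $z$, and combining this with Breuillard--Fujiwara techniques would yield a free semigroup with uniformly short generators, contradicting non-UEG. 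This argument parallels the proof of Corollary~\ref{cor:CSnonelem}.

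Next, set $\bar{\mathcal{B}} := \bigcup_{g\in G} g\cdot \mathcal{B}$; this is $G$-invariant by construction, and each $\fontact U$ for $U\in\bar{\mathcal{B}}$ is uniformly a quasi-line because the $G$-action on $\mathfrak{S}$ preserves the HHS structure and has finitely many orbits. For the dichotomy on $V\notin\bar{\mathcal{B}}$, suppose $\fontact V$ is unbounded and $V$ is not orthogonal to some $U\in\bar{\mathcal{B}}$. Then there is a loxodromic element $w$ for $\fontact V$ and a loxodromic conjugate of $z$ for $\fontact U$; a case analysis on whether $U,V$ are nested or transverse, combined with partial realization and a ping-pong argument in the coordinate hyperbolic spaces, yields uniformly short $w,z$-words that generate a free semigroup, contradicting non-UEG. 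The same argument simultaneously forces pairwise orthogonality among elements of $\bar{\mathcal{B}}$ coming from different $G$-translates of $\mathcal{B}$.

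The main obstacle will be rigorously identifying the element $z$ from the abstract quasi-isometric product decomposition, which is not canonical: one needs a recognition lemma ensuring that any $\mathbb{Z}$-direction in a QI-product of an HHG is detected by the big set of a genuine group element, likely by appealing to the algebraic information preserved by such a quasi-isometry on the level of stable commutators or asymptotic translation lengths. A secondary technical difficulty is ensuring the ping-pong arguments in the nested and transverse cases are \emph{uniform} across all of $\bar{\mathcal{B}}$; this should follow from the finiteness of $G$-orbits on $\mathfrak{S}$ together with uniform constants in the realization axiom.
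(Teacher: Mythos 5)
Your proposal reaches the right conclusion but takes a route the paper does not take, and the route has a fatal gap: word-length control. The paper proves this theorem as a byproduct of the proof of Theorem~\ref{thm:technicalmainthm}, where $\mathcal B=\bigcup_{s\in X}\B(s)$ is built from the big sets of an \emph{arbitrary} generating set $X$ and $\overline{\mathcal B}=X^N.\mathcal B$; Proposition~\ref{prop: basic combinatorics} then shows that either $\overline{\mathcal B}$ stabilizes into a finite $G$--invariant pairwise orthogonal family, or one finds two elements of $X$--length at most $2N+1$ with non-orthogonal big domains, which feed into the ping-pong propositions to give \emph{uniformly short} free (semi)group generators. Every appeal to ``contradicting non-UEG'' requires exactly this: short generators with respect to \emph{every} generating set. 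Your construction starts instead from a single element $z$ extracted from the quasi-isometric $\mathbb Z\times E$ decomposition and sets $\overline{\mathcal B}=\bigcup_{g\in G}g.\B(z)$. The word length of $z$, and of the conjugates $gzg^{-1}$ ranging over all of $G$, is unbounded in an arbitrary generating set, so none of the free semigroups you produce contradict the failure of \emph{uniform} exponential growth --- a group can contain many free semigroups and still fail UEG. This same defect undermines the claimed pairwise orthogonality of $\bigcup_{g\in G}g.\B(z)$ and the quasi-line claim for each $\fontact U$: the paper gets the latter by finding a \emph{generator} $t$ not preserving the endpoint pair $\{s_U^+,s_U^-\}$ and using $t^{-1}s_U^Kt$ (Proposition~\ref{prop:quasilines} plus Proposition~\ref{BreuillardFujiwara:freeSemigroup}), whereas ``partial realization produces an independent loxodromic'' does not produce a group element at all, let alone a short one.

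Two further points. First, you correctly flag the recognition of $z$ from the non-canonical product decomposition as the main obstacle; the paper sidesteps it entirely by never starting from Theorem~\ref{thm:mainthm} (which is itself a corollary of the argument, not an input). Second, the dichotomy for $V\notin\overline{\mathcal B}$ is not proved by ping-pong in the paper: nesting is excluded by Lemma~\ref{lem:infinitediameter} and Proposition~\ref{quasilinesNestMinimal}, and transversality by Proposition~\ref{prop:G_inv_implies_corasely_surj} (a realization/consistency argument exploiting $G$--invariance of $U$), so no loxodromic on $\fontact V$ --- short or otherwise --- is needed. To repair your argument you would essentially have to rebuild $\overline{\mathcal B}$ out of the generating set, at which point you have reproduced the paper's proof.
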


\subsection{About the proof of \Cref{thm:mainthm}} The proof of \Cref{thm:mainthm} has a similar structure to Mangahas's proof of uniform exponential growth for finitely generated subgroups of the mapping class group of a surface \cite{Mangahas}.  However, in this more general setting one needs to handle certain difficult behavior not present in the action of the mapping class group on the hierarchy of subsurface curve graphs. 
In particular, a general HHG does not contain a \emph{pure} subgroup (in the sense of Ivanov \cite{Ivanov}), that is, a finite index torsion-free subgroup such that for every domain $U$, 
elements that stabilize $U$ act on the space $\fontact U$ either loxodromically or trivially.
Indiscrete BMW-groups (see \Cref{ex:BM}) give one class of examples of such phenomena.  Indeed Caprace, Kropholler, Reid, and Wesolek \cite[Corollary~32(i), (iv)]{CKRW} show that in these groups every finite index subgroup contains infinite order elements which are non-trivial elliptic isometries with respect to the action on one of the tree factors.

The proof of \Cref{thm:mainthm} splits into two cases.  
In the first case, we assume that there exist short words that act loxodromically on the hyperbolic spaces associated to two non-orthogonal domains.
In this case we produce uniformly short 
powers that generate a free subgroup by playing ping-pong in the Cayley graph.  
If the first case doesn't hold, then we show that the action of (a finite index subgroup of) $G$ on the set of domains must fix a collection $\overline{\mathcal B}$ of pairwise orthogonal domains
pointwise.  
In this case, we show 
that either $\bar{\mc B}$ is a singleton or the top-level curve graph \(\fontact S\) has bounded diameter.  If $\bar{\mc B}$ is a singleton, we conclude that $G$ has uniform exponential growth by finding uniformly short elements of $G$ which are independent loxodromic isometries of $\fontact S$, and thus have short powers generating a free subgroup.   
If \(\fontact S\) has  bounded diameter, we conclude that $G$ is quasi-isometric to a product, and we next consider whether there are independent loxodromic isometries of $\fontact U$ for each $U\in\overline{\mathcal B}$.  If there are, then $G$ has uniform exponential growth.  Otherwise, we argue that each such $\fontact U$ is  quasi-isometric to a line and use this to give a more explicit description of the product structure of $G$. \\

\noindent{\bf Organization:}
In \Cref{sec:background} we review background material for uniform exponential growth, hierarchically hyperbolic groups, and tools to produce free (semi)groups.  
In \Cref{sec:quasilineStructure}, we give several structural results for when a hierarchically hyperbolic group contains invariant domains whose associated hyperbolic spaces are quasi-lines.  
This is followed by the proof of \Cref{thm:mainthm} in \Cref{sec:proof}, where we also prove all of the corollaries, \Cref{thm:qtits}, and \Cref{thm: the non ueg case}. 

In the appendix, Gupta and Petyt prove \Cref{thm:main}, a strengthening of \Cref{thm:mainthm} in the case of certain CAT(0) cubical groups which states that such groups either have uniform exponential growth or are virtually abelian.   Together,  \Cref{thm:mainthm,thm:main} prove \Cref{thm_main_and_appendix}.\\

\noindent{\bf Acknowledgments:}
The authors are grateful to David Hume for his advice on \Cref{ex:BM}.
The authors thank Jacob Russell, Mark Hagen, Jason Behrstock, and Sam Taylor for several helpful conversations and clarifications, and the anonymous referee for helpful comments.  TN and DS also give special thanks to their respective advisors, Dave Futer and Alessandro Sisto, for their ongoing support and their many helpful comments on early drafts of this paper.  We are also grateful to the organizers of the GAGTA 2018 conference where part of this collaboration began.

The first author was  partially supported by NSF Awards DMS--1803368 and DMS--2106906.  The second author was partially suppported by ISF grant \#660/20 and NSF grant DMS--1907708. The third author was partially supported by the Swiss National Science Foundation (grant \# 182186).

\section{Background and past results}
\label{sec:background}
		
		We begin by recalling some preliminary notions about metric spaces, maps between them, and group actions.  Given metric spaces $X,Y$, we use $\dist_{_X}, \dist_{_Y}$ to denote the distance functions in $X,Y$, respectively.   A map $f\colon X \to Y$ is:
		\begin{itemize}
		\item \emph{$K$--Lipschitz} if there exists a constant $K\geq 1$ such that for every $x,y\in X$, $\dist_{_Y}(f(x),f(y))\leq K \dist_{_X}(x,y)$;
		\item \emph{$(K,C)$--coarsely Lipschitz} if for every $x,y \in X$, $\dist_{_Y}(f(x),f(y))\leq K\dist_{_X}(x,y)+C$.			
		\item  a \emph{$(K,C)$--quasi-isometric embedding} if there exist constants $K\geq 1$ and $C\geq 0$ such that for all $x,y\in X$, \[\frac1K\dist_{_X}(x,y)-C\leq \dist_{_Y}(f(x),f(y))\leq K\dist_{_X}(x,y)+C,\]  
		\item  a \emph{$(K,C)$--quasi-isometry} if it is a $(K,C)$--quasi-isometric embedding and, \emph{coarsely surjective}, that is, $Y$ is contained in the $C$--neighborhood of $f(X)$.  In this case, we say $X$ and $Y$ are \emph{quasi-isometric}.  
		\end{itemize}
For any interval $I\subseteq \mathbb R$, the image of an isometric embedding $I\to X$ is a \emph{geodesic} and the image of a $(K,C)$--quasi-isometric embedding $I\to X$ is a \emph{$(K,C)$--quasigeodesic}.  A space $X$ is a \emph{quasi-line} if it is quasi-isometric to $\R$.

If any two points in $X$ can be connected by a $(K,C)$--quasigeodesic, then we say $X$ is a \emph{$(K,C)$--quasigeodesic space}.  If $K=C$, we may simply say that $X$ is a \emph{$K$--quasigeodesic space}.   For all of the above notions, if the particular constants $K,C$ are not important, we may drop them and simply say, for example, that a map is a quasi-isometry.

Throughout this paper, we will assume that all group actions are by isometries.  The action of a group $G$ on a metric space $X$
is \emph{proper} if the set $\{g\in G\mid gB\cap B\neq \emptyset\}$ is finite for every bounded subset $B\subseteq X$.  The action is  \emph{cobounded} (respectively, \emph{cocompact}) if there exists a bounded (respectively, compact) subset $B\subseteq X$ such that $X=\bigcup_{g\in G} gB$.  If a group $G$ acts on metric spaces $X$ and $Y$, we say a map $f\colon X\to Y$ is \emph{$G$--equivariant} if for every $x\in X$,  $f(gx)=gf(x)$.

Given a metric space $X$ and a subspace $Y$, we define the \emph{$A$--neighborhood of $Y$} to be  
$$N_A(Y)=\{x\in X \mid \dist_{_X}(x,Y)\leq A\}.$$
Let $X$ be a geodesic metric space and let $x,y,z\in X$.  We denote by $[x,y]$ a geodesic segment between $x$ and $y$.  A geodesic triangle with vertices $x,y,z$ is \emph{$\delta$--slim} if there is a constant $\delta\geq 0$ such that for any point  $p\in[x,y]$, there is a point $m\in[y,z]\cup[x,z]$ such that $\dist_{_X}(p,m)\leq \delta$.  The space $X$ is \emph{$\delta$--hyperbolic} if there is a constant $\delta\geq 0$ such that every geodesic triangle is $\delta$--slim.

	\subsection{Uniform exponential growth}

	Given a finite collection of elements $X$ containing the identity in a group, the \emph{growth function} of $X$ is 
$$ \beta_X(n) = \abs{X^n}, $$ 
	where $X^n=\{x_1\dots x_n\mid x_i\in X\}$.
	This function $\beta_X(n)$ counts the number of elements that can be expressed
	as words in the alphabet $X$ with length at most $n$.
	The \emph{exponential growth rate} of a finite subset $X$ of a group is 
$$ \lambda(X) := \lim_{n \ra \infty}\frac{\log(\beta_X(n))}{n}. $$

\begin{defn}[(Uniform) Exponential growth]
	A finitely generated group is said to have \emph{exponential growth} if there is a finite generating set $X$ such that 
	$$ \lambda(X) > 0. $$
	Such a group has \emph{uniform exponential growth} if the infimum over all finite generating sets is bounded away from 0, that is,  
	$$ \lambda_0 \defeq \inf\limits_{\stackrel{\abrackets{X} = G}{\abs{X} < \infty}}\lambda(X) > 0. $$
\end{defn}

\begin{remark}
	One can also use the function 	$$ \om(X) := \lim_{n \ra \infty}\sqrt[n]{\beta_X(n)} $$ in place of $\lambda(X)$ to give an equivalent characterization of exponential growth rate.  In this case, the growth is uniform if it is uniformly bounded above 1.
\end{remark}

	If $G = F_2$ is a  free group of rank two and $X = \braces{1, a, b}$ is a generating set, then it is easy to see that $\abs{X^n} \geq 2^n$.  Hence, $\lambda(X) \geq \log(2)$.  In fact, since any generating set contains a pair of noncommuting elements and nonabelian subgroups of a free group are free, we have $\lambda(X') \geq \log(2)$ for any generating set $X'$.  Therefore, $\lambda_0 \geq \log 2>0$, and so $F_2$ has uniform exponential growth.  By the same reasoning, free semigroups have uniform exponential growth.

In light of this, most known proofs of uniform exponential growth make use of the following observation.

\begin{obs}[Short free semigroups witness uniform exponential growth]
	\label{shortFreeImpliesUEG}
	If there is a constant $N$ depending only on the group $G$ such that for any generating set $X$ there exists two elements with $X$--length at most $N$ whose positive words generate a free semigroup, then $G$ has uniform exponential growth with $\lambda_0 \geq \frac{\log(2)}{N}$. 
\end{obs}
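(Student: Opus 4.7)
The strategy is completely elementary: use the hypothesized short free semigroup to generate exponentially many distinct elements in short balls of the Cayley graph with respect to any generating set, and pass to the growth rate.

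Fix an arbitrary finite generating set $X$ of $G$. By hypothesis, there exist two elements $a,b \in G$, each of $X$--length at most $N$, such that the positive words in $a$ and $b$ generate a free semigroup $F^+_2$. Since $a,b \in X^N$, every positive word of length $n$ in the letters $a,b$ lies in $X^{Nn}$. Because the semigroup generated by $a,b$ is free, these $2^n$ words represent $2^n$ distinct elements of $G$. Therefore
\[
\beta_X(Nn) \;\geq\; 2^n
\qquad\text{for every } n \geq 1.
\]

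To pass from this to a lower bound on $\lambda(X)$, I would invoke the standard fact that $\lambda(X)$ exists as a genuine limit: from $X^{m+n} \subseteq X^m X^n$ one gets $\beta_X(m+n) \leq \beta_X(m)\beta_X(n)$, so $\log \beta_X$ is subadditive and Fekete's lemma gives existence of $\lim_{n\to\infty}\log\beta_X(n)/n$. In particular this limit coincides with its value along any subsequence, so
\[
\lambda(X) \;=\; \lim_{n\to\infty}\frac{\log \beta_X(Nn)}{Nn}
\;\geq\; \lim_{n\to\infty}\frac{n\log 2}{Nn} \;=\; \frac{\log 2}{N}.
\]

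Since $X$ was arbitrary and $N$ depends only on $G$, taking the infimum over all finite generating sets yields $\lambda_0 \geq \log(2)/N > 0$, which is the claim. There is essentially no obstacle here; the only subtlety is to justify that the inequality at a subsequence passes to the full growth rate, which is handled by the existence of the limit via Fekete. Everything else is bookkeeping.
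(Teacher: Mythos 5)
Your proof is correct and follows essentially the same route the paper has in mind: the paper states this as an Observation justified by the preceding paragraph (positive words in a free semigroup give $2^n$ distinct elements, rescaled by the word length $N$ of the generators), and your use of Fekete's lemma to pass from the subsequence $\beta_X(Nn)\geq 2^n$ to the full growth rate is the standard, correct way to make that rescaling precise.
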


The following result of Shalen and Wagreich gives bounds on the growth of a group given the growth of a finite index subgroup.  

\begin{lemma}[{{\cite[Lemma 3.4]{ShalenWagreich}}}]
	\label{finiteIndexWordLength}
	Let $G$ be a group with finite generating set $X$, and let $H$ be a finite index subgroup with $[G:H] = d$. Then there exists a generating set for $H$ all of whose elements of have $X$--length at most $2d - 1$. 
\end{lemma}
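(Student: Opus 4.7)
The plan is to prove this via a Schreier transversal and the Reidemeister--Schreier rewriting procedure. The basic strategy is: build a set of coset representatives that are all short words in $X$, then combine them with generators from $X$ to produce short generators of $H$.

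First, I would form the Schreier coset graph $\Gamma$ for $H$ in $G$ with respect to $X \cup X^{-1}$: the vertex set is the collection of right cosets $\{Hg : g \in G\}$, and for each coset $Hg$ and each $x \in X \cup X^{-1}$ there is an edge from $Hg$ to $Hgx$. Since $G = \langle X \rangle$, the graph $\Gamma$ is connected, and it has exactly $d$ vertices. Choosing a spanning tree rooted at the trivial coset $H$ and doing a breadth-first search, I pick for each coset $Hg$ a representative $t_{Hg}$ whose $X$-length equals the distance from $H$ to $Hg$ in $\Gamma$. Set $t_H = e$. Because $\Gamma$ has $d$ vertices, these distances are at most $d-1$, so I obtain a transversal $T = \{t_1, \dots, t_d\}$ with $|t_i|_X \leq d - 1$ and $t_1 = e$. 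Moreover, this is a \emph{Schreier} transversal, meaning every prefix of each $t_i$ is again some $t_j$.

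Second, I would invoke the Reidemeister--Schreier generating set: for each representative $t_i \in T$ and each $x \in X \cup X^{-1}$, let $t_{j(i,x)}$ be the unique element of $T$ representing the coset $H t_i x$. Then the element
\[
s_{i,x} \;:=\; t_i \, x \, t_{j(i,x)}^{-1}
\]
lies in $H$, and the collection $\{s_{i,x} : 1 \leq i \leq d, \, x \in X \cup X^{-1}\}$ generates $H$. This is the standard rewriting argument: any $h \in H$ can be written as a word $x_{i_1} \cdots x_{i_n}$ in $X \cup X^{-1}$, and by inserting $t_k t_k^{-1}$ pairs between consecutive letters (choosing $t_k$ to represent the coset reached after each prefix), one rewrites $h$ as a product of the $s_{i,x}$.

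Third, I would bound the word length. Each $s_{i,x}$ is a product of at most $(d-1) + 1 + (d-1) = 2d - 1$ letters from $X \cup X^{-1}$, so its $X$-length is at most $2d - 1$. This yields the desired generating set for $H$.

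There is no real obstacle here: the argument is a clean application of Reidemeister--Schreier, and the only quantitative input is the bound on the diameter of the Schreier coset graph, which is immediate from $|V(\Gamma)| = d$. The mild subtlety worth being careful about is that the BFS construction uses letters from $X \cup X^{-1}$, so one should either allow inverses when measuring $X$-length (which is the standard convention) or observe that replacing inverse letters by their counterparts does not alter the bound $2d - 1$.
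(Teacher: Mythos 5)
Your proof is correct, and it takes essentially the same route as the source the paper cites for this lemma (the paper gives no proof of its own, deferring entirely to Shalen--Wagreich, whose argument is exactly this): choose coset representatives of $X$-length at most $d-1$ via the Schreier coset graph, then observe that the Reidemeister--Schreier generators $t_i x t_{j(i,x)}^{-1}$ generate $H$ and each has length at most $(d-1)+1+(d-1)=2d-1$. No gaps; the rewriting step and the length bookkeeping are both handled correctly.
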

This implies that if $[G: H] = d$ then 
$$ \lambda_0(G) \geq \frac{1}{2d - 1}\lambda_0(H), $$
thus, uniform exponential growth passes to finite index supergroups.

	\subsection{Finding free (semi)groups}
In this section, we give multiple ways to find free (semi)groups given an  action of a group on a hyperbolic metric space. We will assume all actions on metric spaces are by isometries.  Together with \Cref{shortFreeImpliesUEG}, these will be our key tools to show uniform exponential growth.

The first is a version of the standard ping-pong lemma. 
\begin{lemma}
	Let $G$ be a group acting on a set $X$, and let $a,b\in G$ of infinite order.  Suppose there exist disjoint non-empty subsets $X_1,X_2\subseteq X$ such that $a^{n}. X_2\subseteq X_1$ and $b^{n}. X_1\subseteq X_2$ for all \(n \neq 0\).  Then $\langle a,b\rangle$ is a free group of rank 2.
\end{lemma}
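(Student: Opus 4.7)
The plan is to run the classical ping-pong argument. Given any nontrivial word $w$ in the free group on $\{a,b\}$, I want to show that its image in $G$ moves some point of $X$, which forces $w$ to represent a nontrivial element of $G$ and hence $\langle a,b\rangle$ to be free of rank $2$ on the basis $\{a,b\}$.

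First, I normalize $w$. After cyclic conjugation inside $F(a,b)$---which does not change whether the associated element of $G$ is trivial---I may assume
\[
w \;=\; a^{k_1} b^{\ell_1} a^{k_2} \cdots a^{k_r} b^{\ell_r}
\]
with all exponents nonzero, so that $w$ begins with a power of $a$ and ends with a power of $b$. Next, I pick a test point $x \in X_1$ and apply the syllables of $w$ to $x$ one by one, from right to left, tracking at each stage which of $X_1$ or $X_2$ the current image lies in. Using $b.X_1 \subseteq X_2$, the rightmost syllable $b^{\ell_r}$ sends $x$ into $X_2$; using $a.X_2 \subseteq X_1$, the next syllable $a^{k_r}$ returns the image into $X_1$; and so on, the image ping-ponging between $X_2$ and $X_1$ at each successive syllable. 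After all $2r$ syllables have acted, the final image $w.x$ lies in $X_1$ or $X_2$ according to the parity of the syllable count, and in the symmetric case where $w$ starts with a power of $b$ and ends with a power of $a$ one instead takes $x\in X_2$. In either parity the endpoints can be arranged so that they lie in different members of the disjoint pair $\{X_1,X_2\}$, whence $w.x\neq x$ and $w$ is nontrivial in $G$.

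The main obstacle, and the only nontrivial piece of the argument, is justifying the syllable-level ping-pong inclusions $a^{k_i}.X_2 \subseteq X_1$ and $b^{\ell_i}.X_1 \subseteq X_2$ from the single-letter hypothesis of the lemma, together with the analogous statements for inverse letters. These are inferred by iterating the given inclusions $a.X_2\subseteq X_1$ and $b.X_1\subseteq X_2$ together with the disjointness $X_1\cap X_2=\emptyset$, and for inverse letters by reading the given inclusions backwards to produce companions of the form $a^{-1}.X_1\subseteq X_2$ and $b^{-1}.X_2\subseteq X_1$. Once these syllable-level inclusions are in hand, the ping-pong bookkeeping described above goes through mechanically and the conclusion that $\langle a,b\rangle$ is free of rank $2$ follows.
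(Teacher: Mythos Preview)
There is a genuine gap in the step where you derive the syllable-level inclusions. From $a.X_2\subseteq X_1$ one obtains, after applying $a^{-1}$, only the \emph{reverse} containment $X_2\subseteq a^{-1}.X_1$; this does \emph{not} yield $a^{-1}.X_1\subseteq X_2$. Likewise, the iteration for higher positive powers fails: $a.X_2\subseteq X_1$ gives $a^2.X_2\subseteq a.X_1$, but the hypotheses say nothing about $a.X_1$, so there is no reason for $a^2.X_2\subseteq X_1$. Thus neither the inverse-letter nor the higher-power inclusions are available, and the ping-pong bookkeeping cannot be carried out.

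In fact no proof is possible, because the lemma as stated is false. Take $G=\mathbb{Z}$ acting on $X=\mathbb{Z}$ by translation, $a=1$, $b=-1$, $X_1=\{0\}$, $X_2=\{-1\}$. Then $a.X_2=X_1$ and $b.X_1=X_2$, yet $\langle a,b\rangle=\mathbb{Z}$ is not free of rank $2$. The paper states the lemma without proof; the intended (standard) version requires the stronger hypothesis $a^{n}.X_2\subseteq X_1$ and $b^{n}.X_1\subseteq X_2$ for \emph{all} $n\neq 0$ (equivalently, the single-letter inclusions for $a^{\pm 1}$ and $b^{\pm 1}$ together with the ones you tried to derive). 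In the paper's application to transverse domains this stronger hypothesis is in fact what is verified, since the displacement estimate $d_V(\rho^U_V,t^{k}.x)\geq \tau_0|k|-\kappa_0$ holds for both signs of $k$.
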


Let $G$ be a group acting on a hyperbolic metric space $X$ with basepoint $x_0\in X$, and let $g\in G$.  The \emph{(stable) translation length of $g$} is defined to be $\tau(g)=\lim_{n\to\infty}\frac{d(x_0,g^nx_0)}{n}$.  If $\tau(g)>0$, then $g$ is a \emph{loxodromic} isometry of $X$.  Equivalently, $g$ is loxodromic if it fixes exactly two points in the boundary $\partial X$ of $X$.  Such isometries act as translation along a quasi-geodesic \emph{axis} in $X$.  Two loxodromic isometries are \emph{independent} if their fixed point sets in $\partial X$ are disjoint. 

The following result gives a method for producing free semigroups from an action on a hyperbolic space.  While the statement is likely well-known, Breuillard and Fujiwara give an explicit proof in this context  \cite{BreuillardFujiwara}.  Their proof generalizes  the analogous result for simplicial trees due to Bucher and de la Harpe \cite{BucherDLHarpe}.  When the hyperbolic space is a  Hadamard manifolds with $K\leq -1$, the result is due to Besson, Courtois, and Gallot \cite{BCG}. 

\begin{prop}
[{\cite[Proposition~11.1]{BreuillardFujiwara}}]
\label{BreuillardFujiwara:freeSemigroup}
	For $\delta \geq 0$ let $X$ be a $\delta$--hyperbolic space, and $g,h \in \Isom(X)$.  Suppose $g$ and $h$ are loxodromic isometries whose fixed point sets in $\partial X$ are not equal and $\tau(g), \tau(h) > 10000\delta$. 
	Then some pair in  $\braces{g^{\pm 1}, h^{\pm 1}}$ generates a free semigroup.
\end{prop}

In particular, this result shows that given a pair of elements with stable translation length bounded from below, there are powers depending only on the displacement bound that generate a free semigroup.  While it would be sufficient to use \Cref{BreuillardFujiwara:freeSemigroup} to 
show uniform exponential growth, under the additional assumption that the action is acylindrical, we can construct genuine free subgroups generated by short conjugates of a single loxodromic.  

\begin{theorem}
[{\cite[Proposition~2.3(2)]{Fujiwara}}] \label{Fujiwara:freeSubgroup}
	If $G$ acts acylindrically on a $\delta$--hyperbolic space containing elements $a,b \in G$ such that $a$ acts loxodromically and $ba^nb^{-1} \neq a^{\pm n}$ for any $n \neq 0$ then there is a constant power $p$ depending on $\delta$ and the acylindricity constants such that $\abrackets{a^k, ba^kb^{-1}} = \F_2 < G$ for all $k \geq p$.
\end{theorem}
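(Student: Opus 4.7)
The plan is to run a quantitative Klein ping-pong argument on the Gromov boundary $\partial X$, using acylindricity to (a) force $a$ and its conjugate $c := b a b^{-1}$ to be independent loxodromics, (b) supply a uniform lower bound on their translation lengths, and (c) bound the fellow-traveling of their quasi-axes uniformly in $\delta$ and the acylindricity constants $(R,N)$.

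First I would prove independence. Both $a$ and $c$ are loxodromic, with fixed point sets $\{a^+, a^-\}$ and $b \cdot \{a^+, a^-\}$ in $\partial X$. If these fixed point sets share a point, then under an acylindrical action on a hyperbolic space the stabilizer of a loxodromic fixed point is virtually cyclic, so $\abrackets{a, c}$ is virtually cyclic and $c^m = a^n$ for some $m,n \neq 0$. Since conjugation preserves stable translation length, $\tau(c) = \tau(a)$ forces $|m| = |n|$, whence $b a^n b^{-1} = a^{\pm n}$, contradicting the hypothesis. Next, I would invoke the known uniform lower bound on stable translation lengths of loxodromic isometries in an acylindrical action: there exists $\tau_0 = \tau_0(\delta, R, N) > 0$ with $\tau(g) \geq \tau_0$ for every loxodromic $g$. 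Paired with a standard argument, acylindricity also yields a uniform constant $L = L(\delta, R, N)$ bounding the length over which two distinct quasi-axes $\ell_a$ and $\ell_c$ can $10\delta$--fellow-travel; otherwise the elements $\{a^i c^{-j}\}$ for $(i,j)$ ranging over a long interval would give more than $N$ isometries moving a pair of distant points by less than the acylindricity threshold, a contradiction. Setting $p := \lceil (100 L + 100\delta)/\tau_0 \rceil$, for every $k \geq p$ the quasi-axes of $a^k$ and $c^k$ are disjoint outside a uniformly bounded region, and disjoint neighborhoods $U^\pm \ni a^\pm$ and $V^\pm \ni b \cdot a^\pm$ in $\partial X$ can be chosen so that $(a^k)^{\pm 1}\bigl(V^+ \cup V^- \cup U^\mp\bigr) \subseteq U^{\pm}$ and symmetrically for $(c^k)^{\pm 1}$, yielding $\abrackets{a^k, c^k} \cong \F_2$ by Klein ping-pong.

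The main obstacle is extracting the axis-overlap constant $L$ purely in terms of $\delta$, $R$, and $N$, without any implicit dependence on $a$ or $b$. This reduces to a careful bookkeeping argument showing that the candidate elements $a^i c^{-j}$ along the fellow-traveling segment are pairwise distinct, which ultimately rests on independence plus the translation length lower bound $\tau_0$. Fujiwara's approach bundles this step through the Bestvina--Fujiwara WPD machinery, which converts the acylindricity data into effective bounds on the number of distinct conjugates of a power of $a$ whose quasi-axes pass near any fixed segment; once these effective WPD constants are in hand, the ping-pong threshold $p$ is immediate.
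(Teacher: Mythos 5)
The paper does not prove this statement: it is imported verbatim as a black-box citation of Fujiwara's Proposition~2.3(2), so there is no in-paper argument to compare against. Your outline matches the strategy of the cited source --- use acylindricity to get a uniform lower bound on translation lengths and a uniform bound on how long two distinct quasi-axes can fellow-travel (via the ``too many short isometries of two far-apart points'' count, with distinctness of the $a^i c^{-j}$ coming from independence), then run quantitative ping-pong --- and the independence step via virtually cyclic stabilizers of loxodromic fixed points is the standard deduction from $ba^nb^{-1}\neq a^{\pm n}$. The only point to tighten is the final ping-pong: phrased on $\partial X$ the choice of neighborhoods is not obviously uniform, so one should play ping-pong with half-spaces in $X$ itself determined by points at distance $\sim L$ along the axes, which is exactly what the uniform overlap constant $L$ is for.
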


We note that the requirement that $ba^nb^{-1}\neq a^{\pm n}$ for any $n$ ensures that $a^k$ and $ba^kb^{-1}$ are independent loxodromic isometries.

	\subsection{Definition of a hierarchically hyperbolic group}

We begin this subsection by recalling the definition of a hierarchically hyperbolic space as 
given in \cite{BehrstockHagenSisto:HHS2}.

\begin{defn}[Hierarchically hyperbolic space]\label{defn:HHS}
The quasigeodesic space  $(\cuco X,\dist_{_\cuco X})$ is a \emph{hierarchically hyperbolic space (HHS)} if there exists $\delta\geq0$, an index set $\mathfrak S$, and a set $\{\fontact W:W\in\mathfrak S\}$ of $\delta$--hyperbolic spaces $(\fontact W,\dist_{_W})$,  such that the following conditions are satisfied:
\begin{enumerate}

\item\textbf{(Projections.)}\label[axiom]{item:dfs_curve_complexes} 
There is
a set $\{\pi_W\colon \cuco X\rightarrow2^{\fontact W}\mid W\in\mathfrak S\}$
of \emph{projections} sending points in $\cuco X$ to sets of diameter
bounded by some $\xi\geq0$ in the various $\fontact W\in\mathfrak S$.
Moreover, there exists $K$ so that each $\pi_W$ is $(K,K)$--coarsely
Lipschitz and $\pi_W(\cuco X)$ is $K$--quasiconvex in $\fontact
W$.

 \item \textbf{(Nesting.)} \label[axiom]{item:dfs_nesting} 
 $\mathfrak S$ is
 equipped with a partial order $\nest$, and either $\mathfrak
 S=\emptyset$ or $\mathfrak S$ contains a unique $\nest$--maximal
 element; when $V\nest W$, we say $V$ is \emph{nested} in $W$.  (We
 emphasize that $W\nest W$ for all $W\in\mathfrak S$.)  For each
 $W\in\mathfrak S$, we denote by $\mathfrak S_W$ the set of
 $V\in\mathfrak S$ such that $V\nest W$.  Moreover, for all $V,W\in\mathfrak S$
 with $V$ properly nested in $W$ there is a specified subset
 $\rho^V_W\subset\fontact W$ with $\diam_{\fontact W}(\rho^V_W)\leq\xi$.
 There is also a \emph{projection} $\rho^W_V\colon \fontact
 W\rightarrow 2^{\fontact V}$. 
 
 \item \textbf{(Orthogonality.)} 
 \label[axiom]{item:dfs_orthogonal} 
 $\mathfrak S$ has a symmetric and
 anti-reflexive relation called \emph{orthogonality}: we write $V\orth
 W$ when $V,W$ are orthogonal.  Also, whenever $V\nest W$ and $W\orth
 U$, we require that $V\orth U$.  We require that for each
 $T\in\mathfrak S$ and each $U\in\mathfrak S_T$ for which
 $\{V\in\mathfrak S_T\mid V\orth U\}\neq\emptyset$, there exists $W\in
 \mathfrak S_T-\{T\}$, so that whenever $V\orth U$ and $V\nest T$, we
 have $V\nest W$.  Finally, if $V\orth W$, then $V,W$ are not
 $\nest$--comparable.
 
 \item \textbf{(Transversality and consistency.)}
 \label[axiom]{item:dfs_transversal} 
 If $V,W\in\mathfrak S$ are not
 orthogonal and neither is nested in the other, then we say $V,W$ are
 \emph{transverse}, denoted $V\trans W$.  There exists
 $\kappa_0\geq 0$ such that if $V\trans W$, then there are
  sets $\rho^V_W\subseteq\fontact W$ and
 $\rho^W_V\subseteq\fontact V$ each of diameter at most $\xi$ and 
 satisfying: $$\min\left\{\dist_{_
 W}(\pi_W(x),\rho^V_W),\dist_{_
 V}(\pi_V(x),\rho^W_V)\right\}\leq\kappa_0$$ for all $x\in\cuco X$.
 
 For $V,W\in\mathfrak S$ satisfying $V\nest W$ and for all
 $x\in\cuco X$, we have: $$\min\left\{\dist_{_W}(\pi_W(x),\rho^V_W),\diam_{\fontact
 V}(\pi_V(x)\cup\rho^W_V(\pi_W(x)))\right\}\leq\kappa_0.$$ 
 
 The preceding two inequalities are the \emph{consistency inequalities} for points in $\cuco X$.
 
 Finally, if $U\nest V$, then $\dist_{_W}(\rho^U_W,\rho^V_W)\leq\kappa_0$ whenever $W\in\mathfrak S$ satisfies either that $V$ is properly nested in \( W\) or that $V\trans W$ and $W\not\perp U$. 
 
 \item \textbf{(Finite complexity.)} \label[axiom]{item:dfs_complexity} 
 There exists $n\geq0$, the \emph{complexity} of $\cuco X$ (with respect to $\mathfrak S$), so that any set of pairwise--$\nest$--comparable elements has cardinality at most $n$.
  
 \item \textbf{(Large links.)} \label[axiom]{item:dfs_large_link_lemma} 
 Thereexist $\lambda\geq1$ and $E\geq\max\{\xi,\kappa_0\}$ such that the following holds.
Let $W\in\mathfrak S$ and let $x,x'\in\cuco X$.  Let
$N=\lambda\dist_{_W}(\pi_W(x),\pi_W(x'))+\lambda$.  Then there exists $\{T_i\}_{i=1,\dots,\lfloor
N\rfloor}\subseteq\mathfrak S_W-\{W\}$ such that for all $T\in\mathfrak
S_W-\{W\}$, either $T\in\mathfrak S_{T_i}$ for some $i$, or $\dist_{_
T}(\pi_T(x),\pi_T(x'))<E$.  Also, $\dist_{_
W}(\pi_W(x),\rho^{T_i}_W)\leq N$ for each $i$.

 \item \textbf{(Bounded geodesic image.)} \label[axiom]{item:dfs:bounded_geodesic_image} 
 There exists $E>0$ such that 
 for all $W\in\mathfrak S$,
 all $V\in\mathfrak S_W-\{W\}$, and all geodesics $\gamma$ of
 $\fontact W$, either $\diam_{\fontact V}(\rho^W_V(\gamma))\leq E$ or
 $\gamma\cap N_E(\rho^V_W)\neq\emptyset$. 
  
 \item \textbf{(Partial Realization.)} \label[axiom]{item:dfs_partial_realization} 
 There exists a constant $\alpha$ with the following property. Let $\{V_j\}$ be a family of pairwise orthogonal elements of $\mathfrak S$, and let $p_j\in \pi_{V_j}(\cuco X)\subseteq \fontact V_j$. Then there exists $x\in \cuco X$ so that:
 \begin{itemize}
 \item $\dist_{_{V_j}}(x,p_j)\leq \alpha$ for all $j$,
 \item for each $j$ and 
 each $V\in\mathfrak S$ with $V_j\nest V$, we have 
 $\dist_{_V}(x,\rho^{V_j}_V)\leq\alpha$, and
 \item if $W\trans V_j$ for some $j$, then $\dist_{_W}(x,\rho^{V_j}_W)\leq\alpha$.
 \end{itemize}

\item\textbf{(Uniqueness.)} \label[axiom]{item:dfs_uniqueness}
For each $\kappa\geq 0$, there exists
$\theta_u=\theta_u(\kappa)$ such that if $x,y\in\cuco X$ and
$\dist_{_\cuco X}(x,y)\geq\theta_u$, then there exists $V\in\mathfrak S$ such
that $\dist_{_V}(x,y)\geq \kappa$.
\end{enumerate}

\end{defn}

For ease of readability, given $U\in\mathfrak S$, we typically 
suppress the projection map $\pi_U$ when writing distances in
$\fontact U$, that is, given $x,y\in\cuco X$ and $p\in\fontact U$ we
write $\dist_{_U}(x,y)$ for $\dist_{_U}(\pi_U(x),\pi_U(y))$ and
$\dist_{_U}(x,p)$ for $\dist_{_U}(\pi_U(x),p)$. 

Heuristically, a hierarchically hyperbolic structure on a space $\X$ is a means of organizing the space by the coarse geometry of the product regions in $\X$ and their interactions.  Nesting gives a notion of sub-product regions and subspaces.  Transversality gives a notion of separate or isolated subspaces.  Orthogonality gives a notion of independent subspaces that together span a product region in $\X$.

An important consequence of being an HHS is the existence of a distance formula, which relates distances in $\mathcal X$ to distances in the hyperbolic spaces $\fontact U$. 
The notation $\ignore{x}{s}$ means include $x$ in the 
sum if and 
only if $x>s$.

\begin{theorem}[Distance formula;
    {\cite[Theorem~4.5]{BehrstockHagenSisto:HHS2}}\label{thm:distance_formula}]\label{thm:distformula}
Let $(\cuco X, \mathfrak S)$ be a hierarchically hyperbolic space.  Then
there exists $s_0$ such that for all $s\geq s_0$, there exist $C,K$ so
that for all $x,y\in\cuco X$,
$$\dist(x,y) \underset{\text{\scriptsize $K,C$}}{\asymp} \sum_{U\in\mathfrak
S}\ignore{\dist_{_U}(x,y)}{s}.$$
\end{theorem}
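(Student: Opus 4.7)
My plan is to prove the two inequalities ``$\sum \lesssim \dist(x,y)$'' and ``$\dist(x,y) \lesssim \sum$'' separately, starting with the easier upper bound on the sum. For the direction $\sum_{U\in\mathfrak S} \ignore{\dist_U(x,y)}{s} \lesssim \dist(x,y)$, I would induct on the complexity $n$ of $(\cuco X, \mathfrak S)$. Fix $s \geq s_0$ with $s_0$ taken considerably larger than the constants $E, \kappa_0, \xi$ from the axioms, and let $S$ denote the $\nest$--maximal element. Since each $\pi_U$ is $(K,K)$--coarsely Lipschitz, the top term satisfies $\dist_S(x,y) \leq K\dist(x,y) + K$. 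For the remaining terms, I would invoke the Large Links axiom: there is a collection $T_1, \ldots, T_{\lfloor N \rfloor}$ with $N = \lambda\dist_S(x,y) + \lambda$, so that every $V \in \mathfrak S_S \setminus \{S\}$ with $\dist_V(x,y) > E$ is nested in some $T_i$. Since each $\mathfrak S_{T_i}$ has strictly smaller complexity, the inductive hypothesis bounds each sub-sum $\sum_{V \nest T_i} \ignore{\dist_V(x,y)}{s}$ linearly in $\dist(x,y)$. Summing over the $N \lesssim \dist(x,y)$ pieces yields the desired bound, after absorbing a finite collection of sub-threshold error terms.

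The reverse direction is substantially more delicate and requires constructing a \emph{hierarchy path} from $x$ to $y$ whose length realizes the right-hand side. The main intermediate ingredient is a \emph{realization theorem}: any family $(p_U)_{U \in \mathfrak S}$ satisfying the consistency inequalities lies within uniformly bounded distance, in each coordinate, of $\pi_U(z)$ for some $z \in \cuco X$. I would establish this by iteratively applying Partial Realization to successively larger pairwise orthogonal subfamilies and patching the resulting points using Bounded Geodesic Image to control how projections in other domains change. Granted realization, one constructs a path from $x$ to $y$ by interpolating along $\fontact U$--geodesics in an order compatible with the nesting partial order: each unit step in some $\fontact U$ lifts via realization to a step of uniformly bounded length in $\cuco X$. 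Bounded Geodesic Image then ensures that a step along $\fontact U$ only affects projections to domains $V$ whose projection $\rho^U_V$ is near the current geodesic, so the total cost of traversing $\fontact U$ is at most $\dist_U(x,y)$ up to bounded error, and each super-threshold domain contributes roughly its projection distance.

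The main obstacle is the realization theorem combined with the choice of threshold $s_0$. One must take $s_0$ large enough that sub-threshold terms are absorbed into the additive constant while the super-threshold terms are accurately tracked by the interpolation procedure; this balancing forces $s_0$ to dominate all the interlocking constants $E, K, \kappa_0, \xi, \lambda$ appearing in the axioms. The Uniqueness axiom plays the crucial role of guaranteeing that the endpoint of the constructed hierarchy path really is near $y$ (rather than drifting away), by ensuring that any two points in $\cuco X$ with close projections in every $\fontact U$ are themselves close. The overall argument closely follows the Masur--Minsky hierarchy machinery for mapping class groups, with the HHS axioms cleanly abstracting the essential combinatorial and coarse-geometric features that drive the proof in the surface setting.
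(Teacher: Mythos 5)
The paper does not prove this theorem: it is imported verbatim as \cite[Theorem~4.5]{BehrstockHagenSisto:HHS2}, so there is no in-paper argument to compare against. Your outline does faithfully reproduce the architecture of the actual proof in Behrstock--Hagen--Sisto (itself modeled on Masur--Minsky): coarse Lipschitzness plus Large Links for the bound on the sum, and the Realization Theorem plus a hierarchy-path construction for the bound on $\dist(x,y)$, with Uniqueness pinning down the endpoint. Two caveats are worth recording.

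First, your ``easy'' direction has a genuine quantitative gap. The induction you describe, with inductive hypothesis of the form $\sum_{V\nest T}\ignore{\dist_V(x,y)}{s}\leq K_{\ell-1}\dist(x,y)+C_{\ell-1}$ for domains $T$ of level $\ell-1$, does not close linearly: Large Links produces $\lfloor N\rfloor$ link domains with $N\approx\lambda\dist_W(x,y)$, which can itself be comparable to $\dist(x,y)$, so summing the inductive bound over the $T_i$ yields $\sum_{V\nest W}\ignore{\dist_V(x,y)}{s}\lesssim \dist(x,y)^{\ell}$, i.e.\ a polynomial of degree equal to the complexity rather than the linear bound the theorem asserts. (A mapping-class-group example: $\dist_S(x,y)=n$ with $n$ component domains each of projection distance $n$ already shows the naive recursion is lossy.) Fixing this requires a \emph{local} inductive statement --- bounding $\sum_{V\nest T_i}$ by the distance between gates in the standard product region of $T_i$ and then showing those gate-distances sum to at most a multiple of $\dist(x,y)$ --- which in \cite{BehrstockHagenSisto:HHS2} is exactly what the hierarchy-path machinery delivers. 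Second, in the hard direction the two claims you defer to --- the Realization Theorem (stated in this paper as Theorem~\ref{thm: realization theorem}) and the assertion that the interpolated path has length controlled by $\sum_U\ignore{\dist_U(x,y)}{s}$ with bounded overlap between the contributions of distinct domains --- are precisely where essentially all of the work lies; as written they are named rather than proved. As a roadmap of the known proof your sketch is accurate, but it is not self-contained at either of these points.
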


We will now define the main object of this paper, hierarchically hyperbolic groups (HHG). Intuitively, a hierarchically hyperbolic group is a group whose Cayley graph is an HHS such that the action of the group on its Cayley graph is compatible with the HHS structure. The compatibility of the action is a key requirement: it is tedious but straightforward to verify that the definition of HHS is quasi-isometry invariant, whereas it is unknown if being an HHG is preserved under quasi-isometry. 
We first recall the definition of a hieromorphism.

\begin{defn}[Hieromorphism; {\cite[Definition 1.20]{BehrstockHagenSisto:HHS2}}]
	\label{item:dfs_hieromorphism}
A \emph{hieromorphism} between the hierarchically hyperbolic spaces \((\X, \mf{S})\) and \((\X', \mf{S}')\) consists of a map \(f\colon \X \to \X'\), an injection \(f^\diamond \colon \mf{S} \to \mf{S}'\) and a collection of quasi-isometric embeddings \(f^\ast_U \colon \fontact U \to \fontact f^\diamond(U)\) such that the two following diagrams uniformly coarsely commute (whenever defined).
\begin{center}
\begin{tikzcd}
\mc{X} \arrow{r}{f}\arrow{d}{\pi_U}     &     \mc{X}' \arrow{d}{\pi_{f^\diamond (U)}}   & &  \fontact U \arrow{r}{f^{\ast}_U}\arrow{d}{\rho^U_V} & \fontact f^\diamond (U)\arrow{d}{\rho_{f^\diamond V}^{f^\diamond U}} \\
\fontact U  \arrow{r}{f^{\ast}_U} & \fontact f^\diamond (U) & & \fontact V\arrow{r}{f^{\ast}_V} & \fontact f^\diamond(V)
\end{tikzcd}\end{center}
\end{defn}
As the functions $f, 
f^*(U),$ and $f^\diamond$ all have distinct domains, it is 
often clear from the 
context which is the relevant map; in that case we periodically abuse 
notation slightly by dropping the superscripts and simply calling all of the maps $f$.

Note that the definition does not have any requirement on the map \(f\). This is because the distance formula (\Cref{thm:distformula}) implies that \(f\) is determined up to uniformly bounded error by the map \(f^\diamond\) and the collection \(\{f^{\ast}_U \mid U \in \mf{S}\}\). 
The fact that a hieromorphism is coarsely determined by its action on the hierarchical structure will play a key role in the definition of a hierarchically hyperbolic group.  
\begin{defn} Let \((\mc{X},\mf{S})\) be a hierarchically hyperbolic space. 
An \emph{automorphism} of \((\mc{X}, \mf{S})\) is a hieromorphism \(f \colon \mc{X} \to \mc{X}\), such that the map \(f^\diamond \colon \mf{S} \to \mf{S}\) is a bijection,  and the maps \(f^{\ast}_U \colon \fontact U \to \fontact f^\diamond (U)\) are isometries.
Two automorphisms \(f, f'\) are \emph{equivalent} if \(f^\diamond = (f')^\diamond\) and \(\phi_U = \phi_U'\) for all \(U\). Given \(f\), we define a quasi-inverse \(\bar{f}\) by setting \(\bar{f}^\diamond = (f^\diamond)^{-1}\) and \(\bar{\phi}_{f^\diamond (U)} = \phi_U^{-1}\) (then \(\bar{f}\) is determined by the distance formula). The set of such equivalence classes forms a group, denoted \(\mathrm{Aut}(\mf{S})\).
\end{defn}
\begin{defn}
A group \(G\) is said to be \emph{hierarchically hyperbolic} if there is a hierarchically hyperbolic space \((\mc{X}, \mf{S})\) and an action \(G \to \mathrm{Aut}(\mf{S})\) such that the quasi-action of \(G\) on \(\mc{X}\) is geometric and \(\mf{S}\) contains finitely many \(G\)--orbits. 
\end{defn}

\begin{remark}
	For any hierarchically hyperbolic group $\mc{X}$ can be taken to be the Cayley graph of $G$ with respect to any finite generating set.  In this case, $G$ acts on $\mc{X}$ by isometries.  We adopt this convention for the remainder of the paper and use the notation $(G, \s)$ to denote this structure. 
\end{remark}

\begin{remark} \label{rem:uniformbound}
By the definition of a hierarchically hyperbolic group, there is finite set of domains $U_1,\dots, U_k$ such that for every $W\in\s$, there is some $i=1,\dots, k$ such that $\fontact W$ is isometric to $\fontact U_i$.  It follows that for every $W\in\s$, the diameter of $\fontact W$ is either infinite or \emph{uniformly} bounded. 
\end{remark}

\begin{remark}\label{rem:rhocommutes}
Durham, Hagen, and Sisto show in \cite[Section~2.1]{DHS:Corrigendum} that for any hierarchically hyperbolic group, we may assume without loss of generality that the diagrams  in Definition \ref{item:dfs_hieromorphism} genuinely commutes, rather than only coarsely commute.  That is, given any $U\in \s$, any $V\in \s$ so that $\rho^U_V$ is defined, and any $g,h\in G$, we have 
\[
g\pi_U(h)=\pi_{gU}(gh) \qquad \textrm{and} \qquad g\rho^U_V=\rho^{gU}_{gV}.
\]
\end{remark}

In what follows we will consider an HHG \((G, \mf{S})\) with respect to different finite generating sets. Let \(X\) and \(Y\) be two finite generating sets for a group \(G\), and suppose that an HHG structure \((G, \mf{S})\) is given, where distances in \(G\) are measured with \(\dist_{_X}\). Then the identity provides an equivariant quasi-isometry between \((G, \dist_{_X})\) and \((G, \dist_{_Y})\). Note that this provides a hierarchically hyperbolic group structure on \((G, \dist_{_Y})\), where all the constants of the hierarchy axioms are the same, except the ones that involve distances in \(G\). In particular, the only two such constants are the \(K\) of the projections of  \Cref{item:dfs_curve_complexes}, and the constant \(\theta_u\) of \Cref{item:dfs_uniqueness}.

\begin{remark}  
\label{rem:hierarchyConstant}	
	We say a constant $k$ depends only on $(G,\s)$ when $k$ depends only on the constants in the definition of the hierarchically hyperbolic structure on $G$ which are independent of the generating set.  Further, we will frequently refer to $D = \max\braces{ \delta, \xi, \kappa_0, n, E }$ as the \emph{hierarchy constant}, which is also independent of the generating set.   
\end{remark}

\begin{lemma} \label{lem:closerhoimpliestrans}
	Let $U,W,V\in\s$ be such that $U$ and $W$ properly nest into $V$.  If $\dist_{_V}(\rho^U_V,\rho^W_V)>2D$, then $U\trans W$.
\end{lemma}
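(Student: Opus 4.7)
The natural approach is to prove the contrapositive: assuming $U$ and $W$ are not transverse, we show $d_V(\rho^U_V,\rho^W_V) \leq 2D$. Since $U$ and $W$ both properly nest into $V$, the negation of $U \trans W$ splits into three cases: (i) $U = W$, (ii) one of $U,W$ is nested in the other, or (iii) $U \orth W$. I would dispatch them in that order.

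Case (i) is immediate: if $U=W$ then $\rho^U_V = \rho^W_V$, so the distance is $0$. For case (ii), assume without loss of generality $U \propnest W$. Here I would invoke the final clause of the consistency axiom (Axiom 4 of Definition \ref{defn:HHS}), which says: if $A \nest B$, then $d_C(\rho^A_C, \rho^B_C) \leq \kappa_0$ whenever $B$ is properly nested in $C$. Applying this with $A = U$, $B = W$, and $C = V$ — using $U \nest W$ together with the standing assumption that $W \propnest V$ — yields $d_V(\rho^U_V, \rho^W_V) \leq \kappa_0 \leq D$.

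Case (iii) is the one that requires slightly more work and is the main obstacle, since the consistency axiom no longer applies directly. Here I would apply Partial Realization (Axiom 8) to the pairwise orthogonal family $\{U, W\}$: choose arbitrary $p \in \pi_U(\cuco X)$ and $q \in \pi_W(\cuco X)$ and produce a realization point $x \in \cuco X$ with $d_U(x,p), d_W(x,q) \leq \alpha$. The second bullet of Partial Realization, applied to the relations $U \propnest V$ and $W \propnest V$, gives $d_V(x,\rho^U_V) \leq \alpha$ and $d_V(x,\rho^W_V) \leq \alpha$. The triangle inequality then yields $d_V(\rho^U_V, \rho^W_V) \leq 2\alpha$.

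To wrap up, I would absorb $\alpha$ into the hierarchy constant $D$ (as is standard: one may always enlarge $D$ to dominate $\alpha$, and indeed the convention in Remark \ref{rem:hierarchyConstant} is that $D$ bounds all generating-set-independent constants from the HHS axioms). Each case gives a bound of at most $2D$ on $d_V(\rho^U_V, \rho^W_V)$, contradicting the hypothesis $d_V(\rho^U_V, \rho^W_V) > 2D$. Hence $U \trans W$, completing the proof.
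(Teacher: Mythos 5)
Your proof is correct and follows essentially the same route as the paper's: the nesting case is handled by the final clause of the consistency axiom, and the orthogonality case by partial realization plus the triangle inequality, with $\alpha$ absorbed into the hierarchy constant $D$. The only cosmetic difference is that you argue by contrapositive with an explicit $U=W$ subcase, whereas the paper argues by contradiction and folds $U=W$ into the nesting case via $U\nest U$.
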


\begin{proof}
	If $U\nest W$ or $W\nest U$, then $\dist_{_V}(\rho^U_V,\rho^W_V)\leq D$ by the transversality and consistency axiom, which contradicts our assumption.  If $U\perp W$, then there is a partial realization point $x\in\X$ such that $\dist_{_V}(x,\rho^U_V)\leq D$ and $\dist_{_V}(x,\rho^W_V)\leq E$.  It follows that $\dist_{_V}(\rho^U_V,\rho^W_V)\leq 2D$, which contradicts our assumption.  Therefore $U\trans W$.
\end{proof}

\begin{defn}[Normalized HHS]
The HHS \((\X, \mf{S})\) is \emph{normalized} if there exists \(C\) such that for each \(U \in \mf{S}\) one has \(\fontact U = N_C (\pi_U(\X))\).
\end{defn}

\noindent \textbf{Standing assumption.} By \cite[Proposition~1.16]{DurhamHagenSisto:Boundaries}, we can and will assume that all hierarchically hyperbolic spaces are normalized.

	\subsection{Preliminaries on hierarchically hyperbolic groups}
In this section, we recall the classification of hierarchical automorphisms from \cite{DurhamHagenSisto:Boundaries} and related results.

\begin{defn}[Big set]
	The \emph{big set} of an element is the collection of all domains onto whose associated hyperbolic spaces the orbit map is unbounded, that is, for an element $g \in \Aut(\s)$ and base point $x \in \X$ the \emph{big set} is
	$$ \bigset{g} = \braces{U \in \s \; \middle |\; \diam_{\fontact U}\pns{\abrackets{g}.x} \text{ is unbounded} }. $$
\end{defn}
Note that this collection is independent of base point.
\begin{remark} The elements of $\bigset{g}$ must all be pairwise orthogonal.  It follows immediately that $|\bigset{g}|$ is uniformly bounded by the constant from  \Cref{item:dfs_complexity} of  \Cref{defn:HHS}.  For the rest of the paper, we denote this number by $N$.
\end{remark}

\begin{defn}  
An automorphism of a hierarchically hyperbolic space is \emph{elliptic} if it acts with bounded orbits on $\X$.  It is \emph{axial} if its orbit map induces a quasi-isometric embedding of a line in $\X$.  
\end{defn}

\begin{prop}[{\cite[Lemma 6.3, Proposition 6.4, \& Theorem 7.1]{DurhamHagenSisto:Boundaries}}]
	\label{prop:HHGisomClassification}
Let \((G, \mf{S})\) be a hierarchically hyperbolic group. Then there exists \(M = M(\mf{S})\) between $0$ and $N!$ so that for all \(g\in \operatorname{Aut}(\mf{S})\) the following hold. 
\begin{enumerate}
\item \(g\) is either elliptic or axial;
\item \(g\) is elliptic if and only if \(\bigset{g} = \emptyset\);
\item for every \(U \in \bigset{g}\), we have \({(g^\diamond)}^M (U) = U\).
\end{enumerate}
\end{prop}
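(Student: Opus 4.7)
The plan is to address the three claims in order of increasing difficulty, relying on the complexity bound on $\bigset{g}$, the distance formula, and the classification of isometries of hyperbolic spaces.

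For claim (3), the key observation is that $g^\diamond$ is a bijection of $\mf S$ that preserves the big set: by equivariance, $\pi_{g^\diamond(U)}(g^n\cdot x) = g\cdot\pi_U(g^{n-1}\cdot x)$, so the orbit $\langle g\rangle\cdot x$ has unbounded diameter in $\fontact U$ if and only if it does in $\fontact{g^\diamond(U)}$. Hence $g^\diamond$ restricts to a permutation of $\bigset{g}$, a set of cardinality at most $N$ by the orthogonality--complexity bound recorded just before the proposition. Any such permutation has order dividing $N!$, so setting $M=N!$ yields $(g^\diamond)^M(U)=U$ for each $U\in\bigset{g}$.

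For claim (2), the forward direction uses the distance formula. If $\bigset{g}=\emptyset$, then for every $U\in\mf S$ the diameter of $\langle g\rangle\cdot x$ in $\fontact U$ is bounded, and because $\mf S$ has only finitely many $G$--orbits this bound is uniform across $\mf S$. Plugging this into Theorem \ref{thm:distformula} with any threshold $s\geq s_0$ gives a uniform bound on $d_{\X}(x,g^n x)$, so $g$ is elliptic. The converse is immediate from the fact that the projections $\pi_U$ are coarsely Lipschitz, so a bounded orbit in $\X$ forces a bounded projected orbit in every $\fontact U$.

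Claim (1) is the substantive part. Replacing $g$ by $g^M$ (which changes neither the property of being axial nor the conclusion we want), we may assume that every $U\in\bigset{g}$ is fixed by $g^\diamond$. Then $g$ acts by isometries on each $\delta$--hyperbolic space $\fontact U$, $U\in\bigset{g}$, with unbounded orbit, so $g$ is either loxodromic or parabolic on $\fontact U$. Ruling out the parabolic case is the main obstacle: one must leverage the fact that $G\to\Aut(\mf S)$ is induced by a geometric action of $G$ on $\X$, so that the induced $\langle g\rangle$--action on $\fontact U$ satisfies an acylindricity-type property incompatible with the infinite-stabilizer horospheres of a parabolic isometry. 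Having excluded the parabolic case, $g$ has positive translation length on each $\fontact U\in\bigset{g}$. I would then use partial realization (Axiom \ref{item:dfs_partial_realization}) to choose a basepoint $x\in\X$ whose projection to each $\fontact U\in\bigset{g}$ lies on a quasi-axis of $g$, and conclude from the distance formula that $n\mapsto g^n x$ is a quasi-geodesic, since $\sum_{U\in\bigset{g}} d_U(x,g^n x)$ grows linearly in $n$ while all other domains contribute only uniformly bounded terms by the argument of claim (2). Thus $g$ is axial, completing the trichotomy.

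The hardest step is therefore excluding parabolic behavior on $\fontact U$; every other piece of the argument is essentially bookkeeping using the complexity bound, the distance formula, and coarse Lipschitzness of projections.
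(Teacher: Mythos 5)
The paper does not actually prove this proposition: it is imported verbatim from Durham--Hagen--Sisto \cite[Lemma 6.3, Proposition 6.4, \& Theorem 7.1]{DurhamHagenSisto:Boundaries}, so there is no internal argument to compare yours against. Judged on its own terms, your part (3) is correct, but the other two parts contain genuine gaps.

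In part (2), the step ``because $\mf{S}$ has only finitely many $G$--orbits this bound is uniform across $\mf{S}$'' does not work. For $U$ and $hU$ in the same $G$--orbit, equivariance gives $\operatorname{diam}_{\fontact {hU}}\pns{\pi_{hU}(\langle g\rangle x)} = \operatorname{diam}_{\fontact U}\pns{\pi_U(\langle h^{-1}gh\rangle h^{-1}x)}$, which is the orbit of a \emph{different} basepoint under a \emph{different} cyclic group; finiteness of the orbit set gives no uniform control over these quantities. Moreover, even a uniform per-domain bound would not suffice on its own: you must also rule out that infinitely (or unboundedly) many domains contribute terms above the distance-formula threshold as $n$ grows. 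This is precisely why the cited source devotes a separate lemma to ``$\B(g)=\emptyset$ implies elliptic''; it cannot be dismissed as bookkeeping. In part (1), you correctly isolate the exclusion of parabolic behavior on $\fontact U$ as the crux, but the mechanism you propose --- an ``acylindricity-type property'' of the induced action on $\fontact U$ --- is not available in general. This very paper stresses in Section \ref{subsec:applications} that hierarchical acylindricity is an \emph{additional} hypothesis that fails, for instance, for the Burger--Mozes groups, where the action on each tree factor is not acylindrical. The actual argument instead exploits coboundedness of the $G$--action, finiteness of $G$--orbits in $\s$, and the hierarchy axioms to obtain a positive lower bound on translation lengths on big-set domains (compare Lemma \ref{lem:translengthbound} here). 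Once $\tau_U(g)>0$ is known, your concluding step (lower bound on $d_{\X}(x,g^nx)$ from a single domain via the distance formula, upper bound from the triangle inequality) is fine; the two steps your sketch leaves unproved or routes through an unavailable hypothesis are exactly the hard content of the proposition.
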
        

\begin{remark} \label{rmk:nonemptybigset} An element $g\in G$ is finite order if and only if $\bigset{g}=\emptyset$ \cite[Lemma~1.7]{AB}.  Therefore, if $G$ is a torsion-free HHG, then every element of $G$ has a non-empty big set.
\end{remark}
    
Given an infinite order element $g\in G$ and a domain $U\in \s$ for which $g$ is loxodromic with respect to the action on $\fontact U$, we let $\tau_U(g)$
denote the (stable) translation length of $g$ in this action.

\begin{lemma}
[{\cite[Lemma~1.8]{AB}}]
\label{lem:translengthbound} 
	Let $(G,\s)$ be a hierarchically hyperbolic group.  There exists a constant $\displacementBound > 0$ such that, for every infinite order element $g\in G$ and every $U\in\B(g)$, we have $\displacement{U}{g}\geq \displacementBound$. 
\end{lemma}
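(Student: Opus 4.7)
The plan is to reduce the claim to a uniform lower bound on translation lengths of loxodromic isometries of finitely many hyperbolic spaces, and then to obtain such a bound from acylindricity of the factor actions.

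For the reduction, take $g$ of infinite order and $U\in\B(g)$, and apply Proposition \ref{prop:HHGisomClassification} to find an integer $M=M(\mf{S})$, depending only on the hierarchy, such that $(g^\diamond)^M(U)=U$. Then $g^M$ acts via the hieromorphism as a loxodromic isometry of $\fontact U$, and $\tau_U(g^M)=M\,\tau_U(g)$, so it suffices to lower-bound $\tau_U(g^M)$. Next, since $\mf{S}/G$ is finite, fix orbit representatives $U_1,\dots,U_k$; writing $U=h\cdot U_i$ the hieromorphism realizes $\fontact U$ and $\fontact U_i$ as isometric and conjugates $g^M$ to an element of the $\diamond$-stabilizer of $U_i$ with the same translation length. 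It therefore suffices to bound below, for each $i$ separately, the translation lengths of all loxodromic isometries that arise in the image of the map $\mathrm{Stab}^\diamond(U_i)\to \Isom(\fontact U_i)$ induced by the hieromorphism.

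To produce the bound, the natural strategy is to argue that each such induced action on $\fontact U_i$ is acylindrical, using the consistency, bounded geodesic image and large link axioms together with finite complexity, and then to invoke the classical fact that in an acylindrical action on a $\delta$-hyperbolic space the translation lengths of all loxodromic elements are bounded below by a constant depending only on $\delta$ and the acylindricity parameters. Taking the minimum of the resulting bounds over $i=1,\dots,k$ and dividing by $M$ gives the desired $\displacementBound$.

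The principal obstacle is establishing acylindricity of the stabilizer action on each factor $\fontact U_i$: acylindricity of the $G$-action on the top-level $\fontact S$ is standard, but propagating the analogous statement to sub-domains through the hierarchy requires care. A potential alternative, bypassing acylindricity, is to combine the distance formula with the integrality of word length in the Cayley graph of $G$, using bounded geodesic image to transfer linear progress in $\fontact U$ back to the group level and thereby rule out sublinear growth of $d_U(1, g^{nM})$ at a rate below a hierarchy-dependent positive constant.
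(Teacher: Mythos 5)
The paper does not actually prove this lemma; it is imported verbatim from \cite[Lemma~1.8]{AB}, so your proposal has to stand on its own. Your reduction is fine: passing to $g^M$ with $(g^\diamond)^M(U)=U$ via Proposition~\ref{prop:HHGisomClassification} and then to finitely many orbit representatives $U_1,\dots,U_k$ is the standard setup, and dividing the resulting bound by $M\leq N!$ at the end is harmless. The fatal problem is the step you yourself flag as the ``principal obstacle'': the induced action of $\stab{U_i}$ (or its quotient by the kernel $K_{U_i}$) on $\fontact U_i$ is \emph{not} acylindrical in general, so no amount of care with the consistency, bounded geodesic image, and large link axioms will establish it. The paper makes exactly this point in Section~\ref{subsec:applications}: \emph{hierarchical acylindricity} is introduced there as an additional hypothesis precisely because it fails for some HHGs, with the Burger--Mozes groups of Example~\ref{ex:BM} as an explicit counterexample (the restriction of the action to each tree factor is not acylindrical). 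Those tree factors do occur as big-set domains of infinite order elements, so your argument would have to apply to them and would break there. Acylindricity is available for the top-level domain $\fontact S$ by \cite[Corollary~14.4]{BehrstockHagenSisto:HHS1}, but it does not propagate to proper sub-domains.

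Your fallback sketch does not close the gap either: the distance formula is consistent with $g^{Mn}$ making linear progress in the Cayley graph while $d_U(1,g^{Mn})$ grows with an arbitrarily small positive rate, because other domains can carry the displacement; integrality of word length gives a lower bound on $d_{\X}(1,g^{Mn})$, not on $\tau_U$. The input that actually makes the lemma work is the uniformity in the Durham--Hagen--Sisto classification underlying Proposition~\ref{prop:HHGisomClassification}: for an axial element $g$ and $U\in\B(g)$, the orbit map $n\mapsto \pi_U(g^{Mn}x)$ is a quasi-isometric embedding whose constants depend only on $(G,\s)$ and not on $g$. A uniform lower quasi-isometry constant $\lambda$ gives $\displacement{U}{g^M}\geq \lambda^{-1}$ directly, without any acylindricity. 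That uniform statement is the substantive missing ingredient, and it is what \cite{AB} supplies.
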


Throughout the paper, it will be important for us to pass to certain finite index subgroups while maintaining the hierarchical structure of the group. We do this with the following lemma.  

\begin{lemma}\label{lem: passing to f.i.subgroup}
	Let $(G,\s)$ be a hierarchically hyperbolic group, and let $H$ be a finite index subgroup of $G$.  Then $(H,\s)$ is a hierarchically hyperbolic group with the same hierarchical structure as \(G\). Moreover, the property of being normalized
is preserved under passing to finite-index subgroups.
\end{lemma}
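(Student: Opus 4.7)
The plan is to retain the HHS $(\X, \s)$ underlying $G$'s HHG structure, where $\X$ is the Cayley graph of $G$, and verify that the data required to upgrade $(\X, \s)$ to an HHG structure for $H$ are all inherited from $G$. Being an HHG requires (i) an HHS, (ii) a geometric quasi-action, and (iii) finitely many orbits on $\s$. Since the HHS itself is unchanged, only (ii) and (iii) need to be checked.

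First I would verify (ii). Properness of the $H$-action on $\X$ is inherited from the $G$-action by restriction. For coboundedness, given a bounded set $B \subseteq \X$ with $G \cdot B = \X$ (which exists by coboundedness of the $G$-action) and a set of left coset representatives $\{t_1, \ldots, t_d\}$ for $H$ in $G$ where $d = [G:H]$, the set $B' = \bigcup_i t_i B$ is still bounded and satisfies $H \cdot B' = \X$, because every $g \in G$ can be written as $h t_i$ for some $h \in H$ and some representative $t_i$. Hence the quasi-action of $H$ on $\X$ is geometric.

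Next for (iii), restricting the homomorphism $G \to \Aut(\s)$ to $H$ provides the required action $H \to \Aut(\s)$. Each $G$-orbit in $\s$ is a union of at most $d$ many $H$-orbits, so $\s$ still has finitely many $H$-orbits. All of the HHS data (the spaces $\fontact U$, the projections $\pi_U$, the $\rho$-maps, and the nesting, orthogonality, and transversality relations) remain exactly as before, which justifies the phrase ``same hierarchical structure'' in the statement.

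Finally, normalization is immediately preserved: the condition $\fontact U = N_C(\pi_U(\X))$ is a property of the HHS $(\X, \s)$ alone and does not reference the group action, so it is inherited along with the HHS. I do not anticipate any serious obstacle here; the proof is essentially unpacking the HHG definition and observing that finite-index subgroups inherit both the geometric action and the finiteness of the orbit set on $\s$.
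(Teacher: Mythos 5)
Your proof is correct and follows essentially the same route as the paper: restrict the $G$-action to $H$, note that finitely many orbits, properness, and coboundedness are all inherited along a finite-index subgroup, and observe that normalization survives. The only cosmetic difference is that you keep the ambient HHS as the Cayley graph of $G$ (making normalization automatic), whereas the paper restricts the projections to $H$ itself and uses the coarse density of $H$ in $G$ to see that they remain coarsely surjective.
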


\begin{proof}
	Since \(G\) is an HHG, we have an embedding \(G \hookrightarrow \Aut(\mf{S})\), and hence an embedding \(H \hookrightarrow \Aut(\mf{S})\).	 Since \(H\) is of finite index in \(G\), we have that \(H\) still acts on \(\mf{S}\) with finitely many orbits. Moreover, since \(H\) coarsely coincides with \(G\), the uniform quasi-action of \(H\) on \(G\)	is metrically proper and cobounded. This proves that \(H\) is an HHG. 
	
	Suppose that \((G, \s)\) is normalized. 
	For each \(U \in \mf{S}\), the map \(\pi_U \colon H \to \fontact U\) is defined as the restriction of \(\pi_U \colon G \to \fontact U\). Since the latter is coarsely surjective by hypothesis, and since \(H\) coarsely coincides with \(G\), we obtain that \(\pi_U \colon H \to \fontact U\) is coarsely surjective, yielding that \(H\) is normalized. 
\end{proof}

\subsection{Hierarchical structures as coordinate systems} \label{sec:coordsys}
In this section, we will describe a product decomposition of $G$.  We begin by recalling the definition of a $\kappa$--consistent tuple.

\begin{defn}[{\cite[Definition~1.17]{BehrstockHagenSisto:HHS2}}]
\label{defn: consistent tuple}

Fix \(\kappa \geq 0\), and let \(\vec{b} \in \PiX\) be a tuple such that for each \(W \in \mf{S}\), the coordinate \(b_W\) is a subset of \(\fontact W\) of diameter at most \(\kappa\). The tuple \(\vec{b}\) is \emph{\(\kappa\)--consistent} if:
\begin{enumerate}
\item 
    \(\dist_{_W} (b_W, \pi_W (\mc{X})) \leq \kappa\) for all \(W \in \mf{S}\);
\item 
    \(\mathrm{min} \left\{\dist_{_W}(b_W, \rho_W^V), \dist_{_V}(b_V, \rho^W_V)  \right\} \leq \kappa \), whenever \(V \pitchfork W\);
\item 
    \(\mathrm{min} \left\{ \dist_{_W}(b_W, \rho_W^V, \mathrm{diam}_{\fontact V} (b_V \cup \rho_V^W (b_W))   \right\} \leq \kappa \), whenever \(V \sqsubseteq W\).
\end{enumerate}
We denote the subset of \(\PiX\) consisting of \(\kappa\)--consistent tuples by \(\Omega_\kappa\). 
\end{defn}
\begin{remark}
Note that for \(\kappa\) large enough, the first condition holds automatically if \((\mc{X}, \mf{S})\) is a normalized HHS.
\end{remark}

The goal of this section is to prove a sufficient condition on the index set \(\mf{S}\) under which the group, \(G\), quasi-isometrically decomposes as a product. This result can be deduced from discussions in \cite[Sections~3~\&~5]{BehrstockHagenSisto:HHS2}; we restate it here, along with its justification, for the sake of clarity and completeness.

\begin{prop} \label{prop:gprod}
Let \((\X, \mf{S})\) be a hierarchically hyperbolic space and let \(\bar{\mf{S}}\) consist of all \(W \in \mf{S}\) such that \(\fontact W\) has infinite diameter. Suppose that \(\bar{\mf{S}}\) can be partitioned as \(\mf{T}_1 \sqcup \cdots \sqcup \mf{T}_n\) where \(\mf{T}_i\neq \emptyset\) for all \(i\) and every element of \(\mf{T}_i\) is orthogonal to every element of \(\mf{T}_j\) for \(i \neq j\). Then there are infinite diameter metric spaces \(Y_i\) such that \(\X\) is quasi-isometric to \(Y_1 \times \cdots \times Y_n\).  Moreover, each $Y_i$ can be equipped with an HHS structure.
\end{prop}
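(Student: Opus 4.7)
The plan is to combine the distance formula (Theorem~\ref{thm:distformula}) with the realization of $\kappa$--consistent tuples in HHSes, exploiting the pairwise orthogonality between distinct $\mf{T}_i$'s in order to glue projections coming from $n$ different points of $\mc{X}$ into a single one.

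For the setup, by Remark~\ref{rem:uniformbound} there is a uniform bound $B$ on $\diam(\fontact W)$ for all $W \notin \bar{\mf{S}}$, so choosing a threshold $s \geq \max\{s_0, B\}$ in the distance formula kills every contribution from $W \notin \bar{\mf{S}}$ and the formula splits according to the partition hypothesis as
$$ d_G(g,h) \; \underset{K,C}{\asymp} \; \sum_{i=1}^{n} D_i(g,h), \qquad D_i(g,h) := \sum_{U \in \mf{T}_i}\ignore{d_U(g,h)}{s}. $$
I would let $Y_i$ be the metric quotient of the pseudo-metric space $(G, D_i)$ and equip the product $Y_1 \times \cdots \times Y_n$ with the $\ell^1$ metric; the diagonal map $\Pi\colon G \to Y_1 \times \cdots \times Y_n$, $g \mapsto ([g]_1, \ldots, [g]_n)$, is a quasi-isometric embedding by the display above. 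Each $Y_i$ has infinite diameter because $\mf{T}_i$ contains some $U$ with $\fontact U$ unbounded and $\pi_U\colon G \to \fontact U$ is coarsely surjective by the normalized standing assumption.

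The core step is coarse surjectivity of $\Pi$. Given representatives $g_1,\ldots,g_n \in G$ and a fixed basepoint $x_0 \in G$, I would define a tuple $\vec b \in \prod_{W \in \mf{S}} 2^{\fontact W}$ by setting $b_W = \pi_W(g_i)$ if $W \in \mf{T}_i$ and $b_W = \pi_W(x_0)$ if $W \notin \bar{\mf{S}}$. The crux is to show that for $\kappa$ large enough (depending only on the hierarchy constants), $\vec b$ is $\kappa$--consistent in the sense of Definition~\ref{defn: consistent tuple}. This splits into three cases: (a) when $V, W$ both lie in the same $\mf{T}_i$, consistency is immediate because the two coordinates are projections of the single point $g_i \in \mc{X}$, which satisfies Axiom~\ref{item:dfs_transversal}; (b) when $V \in \mf{T}_i$, $W \in \mf{T}_j$ with $i \neq j$, the orthogonality hypothesis forces $V \perp W$, and since orthogonal pairs are not $\sqsubseteq$--comparable, neither the transverse nor the nested consistency condition applies; (c) when at least one of $V, W$ lies outside $\bar{\mf{S}}$, the corresponding hyperbolic space has diameter at most $B$, so one term in each $\min$ of the consistency inequalities is automatically $\leq B$. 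The realization theorem of \cite{BehrstockHagenSisto:HHS2} then produces a point $x \in \mc{X}$ whose projections are uniformly close to $\vec b$, so $D_i(x, g_i)$ is uniformly bounded for every $i$; passing to a nearby vertex of the Cayley graph yields the desired coarse preimage of $([g_1]_1, \ldots, [g_n]_n)$ under $\Pi$.

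The main obstacle I foresee is the case analysis establishing $\kappa$--consistency of $\vec b$, particularly in handling mixed pairs where one domain lies outside $\bar{\mf{S}}$; every other part of the argument is essentially bookkeeping with the distance formula and a quotient construction.
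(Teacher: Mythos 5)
Your proposal is correct and follows essentially the same route as the paper: both arguments use the distance formula with a threshold exceeding the uniform bound on small domains to split the metric along the partition, and both use the realization theorem to get coarse surjectivity, with the orthogonality hypothesis guaranteeing that the glued tuple is consistent (the paper packages your consistency case analysis as the one-line observation that $\Omega_\kappa^{\bar{\mf{S}}}$ factors as $\Omega_\kappa^{\mf{T}_1}\times\cdots\times\Omega_\kappa^{\mf{T}_n}$). Your explicit three-case verification of $\kappa$--consistency, including the mixed pairs involving a bounded domain, is sound and fills in exactly the detail the paper leaves implicit.
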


The main technical ingredient to prove the proposition is to establish a connection between \(G\) and the set of consistent tuples. 
First, note that there is a map \(\pi \colon \mc{X} \to \PiX\) defined by associating to each \(x \in \mc{X}\) the tuple \(\{\pi_W(x)\}_{W \in \mf{S}}\).
The standing assumption that \((\mc{X}, \mf{S})\) is normalized yields a constant \(C\) such that all projections \(\pi_W\) are \(C\)--coarsely surjective. Thus, by setting \(\kappa_1 = \mathrm{max} \{C, \kappa_0, \xi\}\), \Cref{item:dfs_curve_complexes,item:dfs_transversal} of \Cref{defn:HHS} give that for each \(\kappa \geq \kappa_1\), the map \(\pi\) has image in \(\Omega_\kappa\). The following theorem should be thought of as saying that the projection \(\pi\) has a quasi-inverse. 
\begin{theorem}[{\cite[Theorem 3.1]{BehrstockHagenSisto:HHS2}}]\label{thm: realization theorem}
For each \(\kappa \geq 1\) there exist \(\theta_e, \theta_u \geq 0\) such that the following holds. Let \(\vec{b} \in \Omega_\kappa\) be a \(\kappa\)--consistent tuple, and for each \(W\) let \(b_W\) denote the \(\fontact W\)--coordinate of \(\vec{b}\).  Then the set \(\Psi(\vec b) \subseteq \mc{X}\) defined as all \(x \in \mc{X}\)  so that \(\dist_{_W} (b_W, \pi_W(x)) \leq \theta_e\) for all \(\fontact W \in \mf{S}\) is non empty and has diameter at most \(\theta_u\). 
\end{theorem}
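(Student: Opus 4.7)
The plan is to prove the two claims of the realization theorem separately: the diameter bound on $\Psi(\vec b)$ is a quick consequence of the uniqueness axiom, while nonemptiness requires an induction on complexity that combines most of the remaining HHS axioms.

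For the diameter bound, suppose $x,y\in\Psi(\vec b)$. For every $W\in\mathfrak S$ the triangle inequality in $\fontact W$ gives
\[
d_W(\pi_W(x),\pi_W(y))\leq d_W(\pi_W(x),b_W)+d_W(b_W,\pi_W(y))\leq 2\theta_e.
\]
If $d_{\mc X}(x,y)\geq \theta_u(2\theta_e)$, the uniqueness axiom would produce some $W$ with $d_W(\pi_W(x),\pi_W(y))\geq 2\theta_e$, a contradiction. Hence $\Psi(\vec b)$ has diameter at most $\theta_u(2\theta_e)$, which gives the desired $\theta_u$.

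For nonemptiness, I would induct on the complexity $n$ of $(\mc X,\mathfrak S)$. The base case $n=1$ is immediate: $\mathfrak S=\{S\}$, and since the HHS is normalized there exists $x\in\mc X$ with $d_S(\pi_S(x),b_S)$ controlled by the normalization constant, so choosing $\theta_e$ at least this bound suffices. For the inductive step, first use normalization to pick $x_0\in\mc X$ with $\pi_S(x_0)$ close to $b_S$, and then iteratively correct $x_0$ on the ``active'' domains, namely those $V\propnest S$ where $d_V(\pi_V(x_0),b_V)$ is large. The key observation enabling the correction is that for any such active $V$, the consistency inequality forces $\rho_S^V$ to lie within $\kappa_0$ of $\pi_S(x_0)\approx b_S$. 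I would select a $\nest$-maximal pairwise orthogonal family of active domains, apply the inductive hypothesis inside each sub-structure on $\mathfrak S_V$ (of strictly smaller complexity) to the restricted subtuple $\{b_U\}_{U\nest V}$, and combine the corrections across orthogonal families using the partial realization axiom.

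The hard part is controlling how each correction affects projections to domains $W$ that are not the one being adjusted. This is precisely what the bounded geodesic image axiom is built to handle: the displacement from $x_0$ to the corrected point projects to a path in $\fontact W$, and unless that path passes near $\rho_W^V$, the projection to $\fontact W$ is essentially unchanged. Combined with the large-link axiom, which bounds the number of ``children'' of $S$ needing correction in terms of the distance traveled in $\fontact S$, and the chain of consistency inequalities that prevents orthogonal or transverse domains from interfering with one another, the iterative procedure terminates in a number of steps controlled by the complexity, yielding a uniform constant $\theta_e$ independent of the tuple $\vec b$.
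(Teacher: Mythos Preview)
The paper does not give its own proof of this statement: it is quoted verbatim as \cite[Theorem~3.1]{BehrstockHagenSisto:HHS2} and used as a black box, so there is no in-paper argument to compare your proposal against.

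That said, your outline matches the strategy of the original proof in \cite{BehrstockHagenSisto:HHS2}. The diameter bound via the uniqueness axiom is exactly right and complete as written. Your sketch of nonemptiness---pick a point close in $\fontact S$, use large links to bound the set of domains needing correction, induct on complexity inside each $\mathfrak S_V$, glue via partial realization, and control collateral damage with bounded geodesic image---is the correct architecture. Two points you are glossing over that carry real content in the original: first, applying the inductive hypothesis ``inside $\mathfrak S_V$'' presupposes that the standard product region $\mathbf F_V$ (or an equivalent sub-HHS) is itself an HHS of strictly smaller complexity and that the restricted tuple remains consistent there, both of which require separate verification; second, the claim that the iterative correction terminates with a \emph{uniform} $\theta_e$ is delicate, since each correction can create new active domains, and the original argument handles this by a careful ordering (coloring by $\nest$-level) rather than a naive loop. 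As a proof \emph{proposal} this is accurate, but a reader would need the cited reference to fill these steps in.
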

 The reason why "\(\Psi\) is a quasi-inverse of \(\pi\)" is not a precise statement is that we did not equip \(\Omega_\kappa\) with a metric. The distance formula (\Cref{thm:distance_formula}) gives a constant \(s_0\) such that for each \(s \geq s_0\) there is a map \(f_s \colon \Omega_\kappa \times \Omega_\kappa\to \mathbb{R}\) defined as 
 \[
 	f_s(\vec a , \vec b ) = \sum_{W \in \mf{S}} \ignore{ \dist_{_W}(a_W, b_W) }{s},
 \]
 such that for every \(x,y \in \mc{X}\), the quantities \(f_s(\pi(x), \pi(y))\) and \(\dist_{_\cuco{X}} (x,y)\) are comparable. However, note that the map \(f_s\) is not a distance: it does not satisfy the triangle inequality and there exists \(\vec a \neq \vec b\) such that \(f_s (\vec a , \vec b) = 0\). To remedy this, we equip \(\Omega_\kappa\) with the subspace metric coming from \(\Psi\), which we denote by \(\dist_{_\cuco{X}}\) with an abuse of notation. 
 
The next ingredient in the proof of \Cref{prop:gprod} is to show that one needs only focus on domains whose associated hyperbolic spaces have sufficiently large diameter. We first concern ourselves with subdividing \(\mf{S}\) into blocks. Let \(\mf{S}'\subseteq \mf{S}\) be any subset. It is straightforward to see that concept of a consistent tuple (\Cref{defn: consistent tuple}) can be generalized to \(\prod_{W \in \mf{S}'}2^{\fontact W}\). Let \(\Omega_\kappa^{\mf{S}'}\) be the set of \(\kappa\)--consistent tuples of \(\prod_{W \in \mf{S}'}2^{\fontact W}\).   
 
\begin{defn}
Let \((\mc{X}, \mf{S})\) be a hierarchically hyperbolic space and suppose that a basepoint \(x \in \X \) is fixed. For \(C < \kappa_0\) consider the set \(\mf{S}_C\) consisting of all \(W \in \mf{S}\) such that \(\mathrm{diam} (\fontact W) > C\). Given \(\vec a \in \Omega_\kappa^{\mf{S}_C}\) we define \(\Psi_{\mf{S}_C}(\vec a) = \Psi (\vec b)\), where \(\vec b\) coincides with \(\vec a\) on \(\mf{S}_C\) and 
\(b_U\defeq \pi_U(x)\) for \(U \in \mf{S}-\mf{S}_C\). 
\end{defn}
\begin{remark}
  The choice of basepoint is not very important: the distance formula shows that the Hausdorff distance between 
 the images of \(\Psi_{\mf{S}_C}\) under different choices of basepoints is bounded in terms of \(C\). For this reason, we will suppress the dependence. 
 \end{remark} 
\begin{lemma}\label{lem: the distance formula only sees big domains}
Let \((\mc{X}, \mf{S})\) be a hierarchically hyperbolic space. Then for each \(0\leq C < \kappa\) the spaces \(\Omega_\kappa\) and \(\Omega_\kappa^{\mf{S}_C}\) equipped with the subspace metric are quasi-isometric.
\end{lemma}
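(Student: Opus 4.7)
The plan is to realize the quasi-isometry via a restriction-extension pair and to argue that, with an appropriate threshold, the distance formula is insensitive to coordinates indexed by domains in $\mathfrak S\setminus\mathfrak S_C$.

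First I would define a restriction map $r\colon \Omega_\kappa\to \Omega_\kappa^{\mathfrak S_C}$ by $r(\vec b)=(b_W)_{W\in\mathfrak S_C}$. Consistency of $r(\vec b)$ at the level of $\mathfrak S_C$ is immediate because the relevant inequalities only involve coordinates indexed by $\mathfrak S_C$ itself (no coordinate in $\mathfrak S\setminus\mathfrak S_C$ is ever referenced on the right-hand side of a consistency inequality among domains of $\mathfrak S_C$). In the opposite direction, I would use the extension $e\colon \Omega_\kappa^{\mathfrak S_C}\to \Omega_\kappa$ obtained (up to reparsing) by $\Psi_{\mathfrak S_C}$: given $\vec a\in\Omega_\kappa^{\mathfrak S_C}$, let $e(\vec a)$ agree with $\vec a$ on $\mathfrak S_C$ and set $e(\vec a)_U=\pi_U(x)$ on $U\in\mathfrak S\setminus\mathfrak S_C$, where $x$ is the fixed basepoint. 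Since every $U\in\mathfrak S\setminus\mathfrak S_C$ has $\operatorname{diam}(\fontact U)\le C<\kappa$, each consistency inequality involving such a $U$ is automatically satisfied by trivial diameter reasons, so $e(\vec a)$ is still $\kappa$-consistent (after possibly replacing $\kappa$ by $\max\{\kappa,C\}$, which is harmless). By construction $r\circ e=\operatorname{id}$.

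The core estimate is then the following. Fix any threshold $s\ge \max\{s_0,C\}$ where $s_0$ comes from \Cref{thm:distformula}. For each $W\in\mathfrak S\setminus\mathfrak S_C$, any pair of coordinates $b_W,b'_W\subseteq \fontact W$ satisfies $d_W(b_W,b'_W)\le \operatorname{diam}(\fontact W)\le C\le s$, so these terms are discarded by the cutoff $\ignore{\cdot}{s}$. Consequently, for any $\vec b,\vec b'\in\Omega_\kappa$,
\[
d_\cuco X(\Psi(\vec b),\Psi(\vec b'))\;\underset{K,C'}{\asymp}\;\sum_{W\in\mathfrak S}\ignore{d_W(b_W,b'_W)}{s}\;=\;\sum_{W\in\mathfrak S_C}\ignore{d_W(b_W,b'_W)}{s},
\]
and an identical computation applied to $e(r(\vec b))$ and $\vec b$ shows the right-hand sum is unchanged upon restricting to $\mathfrak S_C$-coordinates, so $e(r(\vec b))$ lies within a uniformly bounded $d_\cuco X$-distance of $\vec b$. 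Pairing this with $r\circ e=\operatorname{id}$ yields that $r$ and $e$ are coarse inverses, and the comparison above directly provides the quasi-isometric embedding constants.

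The only real subtlety to watch is that the ``metric'' on $\Omega_\kappa$ is defined via $\Psi$, so one must invoke the distance formula (rather than $f_s$) to translate coordinate sums into distances; I expect this to be the main bookkeeping obstacle, as one has to match the constant $\theta_u$ from \Cref{thm: realization theorem} against the additive constant coming from ignoring the small domains. Since $\theta_u$ depends only on $\kappa$ and $s_0$ depends only on the HHS constants, both are uniformly bounded in terms of the data, so the final quasi-isometry constants depend only on $(\cuco X,\mathfrak S)$, $\kappa$, and $C$, as claimed.
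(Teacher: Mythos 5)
Your proposal is correct and follows essentially the same route as the paper: the paper's proof is the one-line observation that choosing the cutoff $s > C$ makes the coordinates in $\mf{S} - \mf{S}_C$ invisible to the distance formula, which is exactly your core estimate. Your additional verifications (that restriction preserves consistency and that the extension via the basepoint is automatically consistent on small domains) are correct and simply make explicit what the paper leaves implicit.
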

\begin{proof}
Setting \(s > C\), the coordinates associated to the elements of \(\mf{S} - \mf{S}_C\) do not contribute to the distance formula. Thus the conclusion follows. \end{proof} 

\Cref{lem: the distance formula only sees big domains} is particularly useful when an HHS satisfies the \emph{bounded domain dichotomy}, that is, when there exists \(C\) such that for each \(U \in \mf{S}\) either \(\mathrm{diam}(\fontact U ) \leq C\) or \(\mathrm{diam} (\fontact U) = \infty\). Notably,  \Cref{rem:uniformbound} states that all HHGs satisfy the bounded domain dichotomy.  The following corollary is immediate.
\begin{coro}\label{coro: bdd domain dich implies q.surjection}
Let \(\mc{X}\) be an HHS satisfying the bounded domain dichotomy, and let \(\bar{\mf{S}}\) consist of all \(W \in \mf{S}\) such that  \(\fontact W\) has infinite diameter. Then there is a constant \(\kappa > 0\) such that \(\Psi_{\bar{\mf{S}}} \colon \Omega_{\kappa}^{\bar{\mf{S}}} \to \mc{X}\) is coarsely surjective, and so \(\Omega_{\kappa}^{\bar{\mf{S}}}\) with the subspace metric is quasi-isometric to \(\mc{X}\). 
\end{coro}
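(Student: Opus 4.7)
The plan is to combine the realization theorem (Theorem~\ref{thm: realization theorem}), Lemma~\ref{lem: the distance formula only sees big domains}, and the bounded domain dichotomy hypothesis to show that $\Psi_{\bar{\mf{S}}}$ is a quasi-isometry, where the only real work is to verify coarse surjectivity.

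First, I would choose the constants. Since $\mc{X}$ satisfies the bounded domain dichotomy, there is a constant $C_0$ such that every $U \in \mf{S}$ with $\fontact U$ of finite diameter actually satisfies $\mathrm{diam}(\fontact U) \leq C_0$. Take $C = C_0$ in the setup preceding Lemma~\ref{lem: the distance formula only sees big domains}; then $\mf{S}_C = \bar{\mf{S}}$, because any $U$ with $\mathrm{diam}(\fontact U) > C_0$ must be of infinite diameter. Next, choose $\kappa \geq \max\{C_0, \kappa_0, \xi, C\}$ large enough that $\pi(x) \in \Omega_\kappa$ for every $x \in \mc{X}$ (using normalization), and also large enough for the realization theorem to apply. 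Fix also a basepoint $x_0 \in \mc{X}$ for the definition of $\Psi_{\bar{\mf{S}}}$.

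Next I would check coarse surjectivity, which is the main step. Given $x \in \mc{X}$, let $\vec{a} \in \Omega_\kappa^{\bar{\mf{S}}}$ be the restriction of $\pi(x)$ to $\bar{\mf{S}}$. By definition $\Psi_{\bar{\mf{S}}}(\vec{a}) = \Psi(\vec{b})$, where $\vec{b}$ agrees with $\vec{a}$ on $\bar{\mf{S}}$ and is equal to $\pi_U(x_0)$ on $\mf{S} - \bar{\mf{S}}$. I compare $\Psi_{\bar{\mf{S}}}(\vec{a})$ to $x$ using the distance formula with a threshold $s > C_0$: the coordinates indexed by $\mf{S} - \bar{\mf{S}}$ have diameter at most $C_0 < s$, so they contribute nothing, while on $\bar{\mf{S}}$ the realization theorem guarantees that $\pi_W(\Psi_{\bar{\mf{S}}}(\vec{a}))$ and $\pi_W(x)$ coarsely agree. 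Hence $d_{\mc{X}}(\Psi_{\bar{\mf{S}}}(\vec{a}), x)$ is uniformly bounded, yielding coarse surjectivity.

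Finally, for the quasi-isometry, I would invoke Lemma~\ref{lem: the distance formula only sees big domains}. That lemma, applied with $C = C_0$, already tells us that the restriction map $\Omega_\kappa \to \Omega_\kappa^{\bar{\mf{S}}}$ is a quasi-isometry with respect to the subspace metrics, and Theorem~\ref{thm: realization theorem} identifies $(\Omega_\kappa, d_{\mc{X}})$ with $\mc{X}$ up to quasi-isometry via $\Psi$ and $\pi$. By definition of the subspace metric on $\Omega_\kappa^{\bar{\mf{S}}}$, the map $\Psi_{\bar{\mf{S}}}$ is a quasi-isometric embedding; combined with the coarse surjectivity from the previous step, it is a quasi-isometry. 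The one subtle point I anticipate is ensuring that the padding coordinates $\pi_U(x_0)$ in the definition of $\Psi_{\bar{\mf{S}}}$ do not spoil the distance comparison, and this is handled precisely by choosing the distance-formula threshold strictly greater than the dichotomy constant $C_0$.
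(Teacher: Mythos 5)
Your proposal is correct and follows essentially the same route as the paper, which simply declares the corollary ``immediate'' from Lemma~\ref{lem: the distance formula only sees big domains}: under the bounded domain dichotomy one takes $C$ to be the dichotomy constant so that $\mf{S}_C = \bar{\mf{S}}$, and then the realization theorem plus a distance-formula threshold exceeding that constant give coarse surjectivity and the quasi-isometry. Your elaboration of the surjectivity step (restricting $\pi(x)$ to $\bar{\mf{S}}$, padding with basepoint coordinates, and noting the small-domain terms vanish from the distance formula) is exactly the implicit content of the paper's argument.
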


We refer to $U \in \mf{S}$ as being a \emph{(un)bounded domain }when its associated hyperbolic space $\fontact U$ is (un)bounded.  The last ingredient missing to \Cref{prop:gprod} is a criterion to determine when a subspace of an HHS is itself and HHS. 

\begin{defn}\label{defn_HQC}
Let $(\mc{X}, \mf{S})$ be a hierarchically hyperbolic space. A subset $Y\in \mc{X}$ is \emph{hierarchically quasi-convex} if:
\begin{enumerate}
\item the projection $\pi_U(Y)$ are uniformly quasi-convex for all $U \in \mf{S}$;
\item For each $r$ there is an $R$ such that if $x\in \mc{X}$ satisfies $\dist_{_U}(x, \pi_U(Y))\leq r$ for all $U$, then $\dist_{_\cuco{X}}(x,Y)\leq R$.
\end{enumerate}
\end{defn}
By \cite[Proposition~5.6]{BehrstockHagenSisto:HHS2} every hierarchically quasi-convex subset of an HHS can be equipped with an HHS structure. 
We can now prove \Cref{prop:gprod}.  

\begin{proof}[Proof of \Cref{prop:gprod}]
By assumption, \(\bar{\mf{S}}\) can be partitioned as \(\mf{T}_1 \sqcup \cdots \sqcup \mf{T}_n\) where every element of \(\mf{T}_i\) is orthogonal to every element of \(\mf{T}_j\) for \(i \neq j\). By consistency (see Definition \ref{defn: consistent tuple}), the set \(\Omega^{\bar{\mf{S}}}_\kappa\) can be written as \(\Omega^{\mf{T}_1}_\kappa \times \cdots \times \Omega_\kappa^{\mf{T}_n}\). Fix a basepoint \(x \in \mc{X}\) and for each \(\Omega_\kappa^{\mf{T}_i}\) consider the map \(\Psi_{\mf{T}_i} \colon  \Omega_\kappa^{\mf{T}_i} \to \mc{X}\) defined by \(\Psi_{\mf{T}_i}(\vec a) = \Psi_{\bar{\mf{S}}} (\vec b)\), where \(\vec b\) coincides with \(\vec a\) on \(\mf{T}_i\) and is defined to be \(\pi_U(x)\) otherwise. Let \(Y_i\) denote the resulting metric space. The distance formula yields that \(Y_1 \times \cdots \times Y_n\) is quasi-isometric to \(\Psi_{\bar{\mf{S}}}\left(\Omega_\kappa^{\bar{\mf{S}}}\right)\). By \Cref{coro: bdd domain dich implies q.surjection}, the latter coarsely coincides with \(G\).  We are left with proving that each $Y_i$ is hierarchically quasi-convex. By definition, $\pi_W(Y_i)$ coarsely coincides with $\pi_W(\mc{X})$ for $W \in \mf{T}_i$ and it coarsely coincides with $\pi_W(x)$ otherwise. This proves the first item of Definition~\ref{defn_HQC}. For the second, let $y\in \mc{X}$ be such that $\dist_{_W}(y,x)\leq r$ for all $W \not\in\mf{T}_i$. Let $z\in Y_i$ be the realization point of the tuple defined as $\pi_W(y)$ for $W\in\mf{T}_i$ and as $\pi_W(x)$ for $W\not\in \mf{T}_i$. Note that by definition of $\mf{T}_i$ such a tuple is consistent. By the distance formula, we can bound the distance between $z$ and $y$ in terms of $r$, which shows the second item of hierarchical quasi-convexity.
\end{proof}

	\section{Structural results}
	\label{sec:quasilineStructure}

In this section, we give several structural results which will be useful in the proof of \Cref{thm:mainthm}.
 
\begin{lemma} \label{lem:infinitediameter}
	Let $(G,\s)$ be a hierarchically hyperbolic group.  Suppose $\mc U$ is a $G$--invariant collection of pairwise orthogonal domains such that $\fontact U$ has infinite diameter for each $U\in\mc U$. If there exists a domain $V\not \in \mc U$ with $\operatorname{diam}(\fontact V)=\infty$, then for any $U\in\mc U$, we have $U\not\nest V$. 
\end{lemma}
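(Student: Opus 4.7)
I argue by contradiction: suppose some $U \in \mc U$ is properly nested in some $V \notin \mc U$ with $\diam(\fontact V) = \infty$. The plan is to produce a single element $g \in G$ that fixes both $U$ and $V$ under $g^\diamond$ while acting loxodromically on $\fontact V$. Once such a $g$ is in hand, the commuting square for $\rho$--maps in the hieromorphism definition (\Cref{item:dfs_hieromorphism}) gives
\[
g^{\ast}_V(\rho^U_V) \;=\; \rho^{g^\diamond U}_{g^\diamond V} \;=\; \rho^U_V,
\]
so $g^\ast_V$ preserves the subset $\rho^U_V \subseteq \fontact V$, which has diameter at most $\xi$, setwise. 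Any $p \in \rho^U_V$ then has $\{(g^\ast_V)^n(p)\}_{n \in \Z} \subseteq \rho^U_V$, so its orbit is bounded. This contradicts $\tau_V(g) \geq \tau_0 > 0$, which holds for $V \in \B(g)$ by \Cref{lem:translengthbound}.

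To construct $g$, I first use $G$--invariance together with the uniform bound $|\mc U| \leq N$ coming from pairwise orthogonality (the same bound that is noted to apply to $|\B(\cdot)|$): this gives that $\text{Stab}_G(U)$ has finite index in $G$. I then invoke the standard HHG fact that $\text{Stab}_G(V)$ acts coboundedly on $\fontact V$, which follows from cocompactness of $G \curvearrowright \X$ combined with the finiteness of $G$--orbits on $\s$. Since $\fontact V$ has infinite diameter, this cobounded action forces some $g_0 \in \text{Stab}_G(V)$ to have an unbounded $\langle g_0 \rangle$--orbit on $\fontact V$; equivalently, $V \in \B(g_0)$, and $g_0$ has infinite order by \Cref{rmk:nonemptybigset}. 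Finally, since the orbit $\langle g_0 \rangle \cdot U \subseteq \mc U$ is finite, some power $g := g_0^p$ also fixes $U$; this $g$ still fixes $V$ and still satisfies $V \in \B(g)$, so it is the desired element.

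The main technical hurdle is producing $g_0 \in \text{Stab}_G(V)$ with unbounded orbit on $\fontact V$; this is the step where the HHG hypotheses (cocompactness of $G \curvearrowright \X$ and finitely many $G$--orbits on $\s$) genuinely enter. Everything else is a clean assembly of \Cref{prop:HHGisomClassification}, \Cref{lem:translengthbound}, and the equivariance of hieromorphisms on the $\rho$--maps.
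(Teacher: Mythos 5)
Your closing contradiction is sound: an element $g$ stabilizing both $U$ and $V$ and acting loxodromically on $\fontact V$ would coarsely preserve the uniformly bounded set $\rho^U_V$ (note the equivariance $g^\ast_V(\rho^U_V)=\rho^{g U}_{g V}$ holds only up to a uniform constant, but applying it to each power $g^n$ directly, rather than iterating, still bounds the orbit of $\rho^U_V$), which is impossible. The gap is in producing $g$. You assert as ``standard'' that $\operatorname{Stab}_G(V)$ acts coboundedly on $\fontact V$, deducing it from cocompactness of $G \curvearrowright \X$ plus finiteness of the $G$--orbits on $\s$. That deduction does not go through: coarse surjectivity of $\pi_V$ and cocompactness give, for any two points of $\fontact V$, an element $h\in G$ carrying a coarse $\pi_V$--preimage of one to a coarse $\pi_V$--preimage of the other, but $h$ sends $V$ to $hV$ rather than to $V$, so it contributes nothing to the $\operatorname{Stab}_G(V)$--orbit in $\fontact V$. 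Coboundedness of domain stabilizers on their associated hyperbolic spaces is not an HHG axiom and is not known to follow from the axioms for a general domain; it is automatic for the maximal domain (where the stabilizer is all of $G$) and is the content of the $G$--invariant case treated in \Cref{prop:G_inv_implies_corasely_surj}, but your $V$ is an arbitrary infinite-diameter domain whose $G$--orbit may be infinite, so you have no control over $\operatorname{Stab}_G(V)$: a priori it could act on $\fontact V$ with bounded orbits, and the element $g_0$ your argument requires need not exist. (The subsequent step, that a cobounded action on an unbounded hyperbolic space admits an element with unbounded orbit, also needs an external citation, but that is minor by comparison.)

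The paper's proof avoids stabilizers of $V$ entirely. It fixes a partial realization point $p$ for the whole orthogonal family $\mc U$, so that $d_V(p,\rho^U_V)\leq\alpha$ and, crucially, $d_{W'}(p,\rho^{W}_{W'})\leq\alpha$ for \emph{every} $W\in\mc U$ nested in any $W'$. For an arbitrary $g\in G$, the element $pg^{-1}$ carries $U\nest V$ to $pg^{-1}U\nest pg^{-1}V$ with $pg^{-1}U$ still in $\mc U$ (this is where $G$--invariance of $\mc U$ enters), so the same partial realization point $p$ also projects $\alpha$--close to $\rho^{pg^{-1}U}_{pg^{-1}V}$; chasing the equivariance back through the isometry $\fontact V\to\fontact{(pg^{-1}V)}$ shows that $d_V(p,g)$ is uniformly bounded for every $g$, contradicting $\operatorname{diam}(\fontact V)=\infty$. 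To salvage your approach you would need to either prove coboundedness of $\operatorname{Stab}_G(V)\curvearrowright\fontact V$ (a genuine additional hypothesis) or restructure the argument, as the paper does, so that it only uses arbitrary elements of $G$ together with equivariance of the $\rho$--maps and the $G$--invariance of $\mc U$.
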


\begin{proof}
	Suppose by way of contradiction that there exists a domain $U\in\mc U$ such that $U\nest V$.  For each $W\in \mc U$, fix any point $p_W\in \fontact W$, and let $p\in G$ be given by partial realization (\Cref{item:dfs_partial_realization} of \Cref{defn:HHS}).   Pick any $g\in G$ and consider the points $\pi_V(g)$ and $\pi_V(p)$.  
	By the choice of $p$, \[\dist_{_V}(p,\rho^U_V)\leq \alpha.\]  Now apply the isometry $\phi_{pg^{-1}}\colon \fontact V\to\fontact (pg^{-1}V)$ induced by $pg^{-1}$.  It follows that \[\dist_{_{pg^{-1}V}}(\phi_{pg^{-1}}(\pi_V(p)),\phi_{pg^{-1}}(\rho^U_V))\leq \alpha.\]  
Since $\phi_{pg^{-1}}(\rho^U_V)$ uniformly coarsely coincides with $\rho^{pg^{-1}U}_{pg^{-1}V}$, we have that $\phi_{pg^{-1}}(\pi_V(p))$ uniformly coarsely coincides with $\rho^{pg^{-1}U}_{pg^{-1}V}$.  

As the action of $G$ on $\s$ fixes $\mc U$ setwise, it follows that $pg^{-1}U\in\mc U$.  Moreover, $pg^{-1}U\nest pg^{-1}V$.  
Thus, by using partial realization as above, we have that $\pi_{pg^{-1}V}(p)$ uniformly coarsely coincides with $\rho^{pg^{-1}U}_{pg^{-1}V}$, and so $\phi_{pg^{-1}}(\pi_V(p))$ uniformly coarsely coincides with $\pi_{pg^{-1}V}(p)$, as well.  Moreover, $\pi_{pg^{-1}V}(p)=\pi_{pg^{-1}V}(pg^{-1}g)$, hence applying the inverse isometry $\phi_{gp^{-1}}$ shows that the distance between $\pi_V(p)$ and $\pi_V(g)$ is uniformly bounded.  Since $g$ was arbitrary and $\pi_V$ is coarsely surjective, it follows that $\fontact V$ has finite diameter, which contradicts our assumption on $V$.
\end{proof}

The following proposition shows that any \(G\)--invariant domain whose associated hyperbolic space is a quasi-line that contains the axis of a loxodromic must be nest minimal.

\begin{prop}
	\label{quasilinesNestMinimal}
	Let $(G,\domains)$ be a hierarchically hyperbolic group, and suppose there exists $U\in \s$ such that $G. U=U$ and $\fontact U$ is $Q$--quasi-isometric to $\R$.  If $G$ contains an element acting by translation on $\fontact U$, then for all $V\propnest U$, $\operatorname{diam}(CV)<\infty$.
\end{prop}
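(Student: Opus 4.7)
My plan is to argue by contradiction: assume $V \propnest U$ with $\operatorname{diam}(\fontact V) = \infty$ and show this forces $\fontact V$ to actually be bounded. The argument combines the quasi-line structure on $\fontact U$ with the translating element $g$ to propagate boundedness through the consistency and bounded geodesic image axioms.

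The first step is a structural observation on $\rho^U_V$: since $\fontact U$ is $Q$--quasi-isometric to $\R$ and $\rho^V_U$ has diameter at most $\xi$, the complement of a sufficiently large neighborhood of $\rho^V_U$ in $\fontact U$ splits into two unbounded components $P^+$ and $P^-$. After enlarging the constant $E$ from bounded geodesic image (Axiom \ref{item:dfs:bounded_geodesic_image}) in terms of $\delta$ and $Q$, any geodesic connecting two points on the same side stays outside $N_E(\rho^V_U)$; bounded geodesic image then bounds the $\rho^U_V$--image of each such geodesic by $E$, so $\rho^U_V(P^+)$ is contained in a uniformly bounded subset $R^+\subseteq \fontact V$, and similarly $\rho^U_V(P^-)\subseteq R^-$.

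For the main step, fix any $p_V \in \fontact V$ and apply partial realization (Axiom \ref{item:dfs_partial_realization}) to the singleton $\{V\}$ to obtain $p\in G$ with $d_V(p,p_V)\leq\alpha$ and $d_U(p,\rho^V_U)\leq\alpha$. Let $T>0$ be the stable translation length of $g$ on $\fontact U$ and choose an integer $n$ depending only on $(G,\s)$ and $g$ so that $nT$ exceeds $2\alpha + \operatorname{diam}(\rho^V_U)+ 2\kappa_0 + 2E$. Then $\pi_U(g^n p)=g^n \pi_U(p)$ lies in $N_\alpha(\rho^{g^n V}_U)\subseteq P^+$ (say), well outside $N_E(\rho^V_U)$. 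The consistency axiom applied to the pair $(g^n p,V)$ now forces $\pi_V(g^n p)\in N_{\kappa_0}(\rho^U_V(\pi_U(g^n p)))\subseteq N(R^+)$, a bounded subset of $\fontact V$ independent of $p_V$.

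To close the argument I would appeal to Lemma \ref{lem:closerhoimpliestrans}: since $d_U(\rho^V_U,\rho^{g^n V}_U) \approx nT > 2D$, the domains $V$ and $g^n V$ are transverse. The transversality form of consistency for the triple $(g^n p, V, g^n V)$, combined with the equivariance $\pi_{g^n V}(g^n p)=(g^n)^{\ast}_V(\pi_V(p))$ and $d_{g^n V}\!\left((g^n)^{\ast}_V(\pi_V(p)),\,(g^n)^{\ast}_V(p_V)\right)\leq \alpha$, lets us pin $(g^n)^{\ast}_V(p_V)$ (and hence, via the isometry $(g^n)^{\ast}_V$, the point $p_V$ itself) to a bounded neighborhood of one of $\rho^{g^n V}_V$ or $\rho^V_{g^n V}$. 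The main obstacle I anticipate is precisely this last transfer: a naive application of coarse Lipschitzness of $\pi_V$ gives only $d_V(p,g^n p)\leq K d_G(p,g^n p)+K$, and $d_G(p,g^n p)$ equals the length of the conjugate $p^{-1}g^n p$, which grows with the word length of $p$ and hence with $d_V(p_V,\pi_V(e))$. The transversality route is designed to sidestep this by bounding $p_V$ directly from the two projections of the single element $g^n p$, yielding a bound independent of $p_V$, and hence $\fontact V$ bounded, contradicting our hypothesis.
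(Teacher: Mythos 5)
Your setup (splitting $\fontact U \smallsetminus N(\rho^V_U)$ into two unbounded halves $P^{\pm}$, using bounded geodesic image to bound $R^{\pm}=\rho^U_V(P^{\pm})$, and using the translating element together with Lemma \ref{lem:closerhoimpliestrans} to get $V\pitchfork g^nV$) matches the paper's, but the endgame has a genuine gap. The transversality consistency inequality is a disjunction: for the point $g^np$ it says that \emph{at least one} of $d_V(\pi_V(g^np),\rho^{g^nV}_V)\leq\kappa_0$ and $d_{g^nV}(\pi_{g^nV}(g^np),\rho^V_{g^nV})\leq\kappa_0$ holds. Only the second branch constrains $p_V$ (via $\pi_{g^nV}(g^np)\approx g^n p_V$); the first branch involves only $\pi_V(g^np)$ and $\rho^{g^nV}_V$ and says nothing about $p_V$. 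Worse, with your choice of point and domains the first branch is essentially always the one that holds: your own nested-consistency step places $\pi_V(g^np)$ in $N_{\kappa_0}(R^+)$, while $\rho^{g^nV}_U\subseteq P^+$ and the standard coarse identity $\rho^{g^nV}_V\approx\rho^U_V(\rho^{g^nV}_U)$ place $\rho^{g^nV}_V$ in $R^+$ as well. The two sets you would need to be far apart in order to force the second branch both sit in the same uniformly bounded set, so the consistency inequality is coarsely satisfied on the uninformative side and no bound on $p_V$ follows.

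What is missing is the freedom to move the basepoint and the domain \emph{before} running the consistency argument. The paper first replaces $V$ by a translate so that $\pi_V(1)$ is \emph{far} from $\rho^U_V(A_{\pm})$ (its Claim 1, by coarse surjectivity of $\pi_V$ plus equivariance), and then --- this is the real work --- produces a single element $h$ such that $\pi_V(h)$ is still far from $\rho^U_V(A_{\pm})$ \emph{and} $\rho^{hV}_U$ is far from $\rho^V_U$ (its Claim 2). That second claim uses the index-two subgroup fixing $\partial\fontact U$ pointwise and the fact that an isometry of a quasi-line fixing both ends and moving one point a little moves every point a uniformly bounded amount; a candidate $h'$ is then corrected by a large power $t^M$ of the translating element without destroying the first property. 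With both properties in hand, $\pi_V(h)$ is far from $\rho^{hV}_V$ and, applying $h$ to Claim 1, $\pi_{hV}(h)$ is far from $\rho^V_{hV}$, so \emph{both} branches of the consistency inequality fail at once, giving the contradiction. Your instinct that a naive Lipschitz estimate on $d_V(p,g^np)$ cannot work is correct, but the transversality dichotomy as you invoke it does not close the argument.
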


\begin{figure}[h]
	\centering
	\includegraphics[width=\textwidth, trim=90 120 80 140, clip]{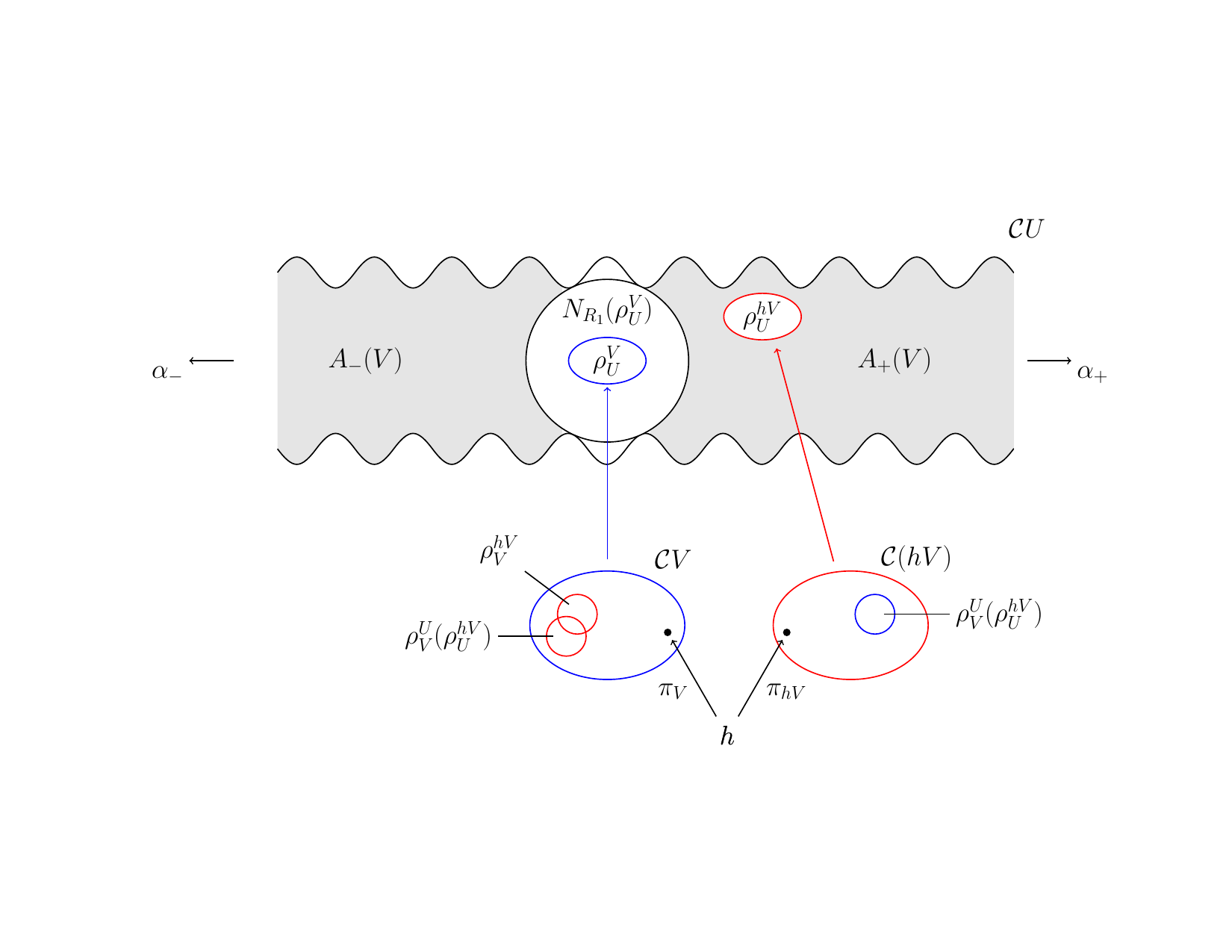}
	\caption{A schematic of the spaces and projections in the proof of \Cref{quasilinesNestMinimal}.}
	\label{fig:Prop32}
\end{figure}

\begin{proof}
	We remark that since we are solely concerned with understanding the spaces \(\fontact W\) for \( W \in \mf{S}\), we can fix an arbitrary generating set to work with for the proof of this proposition.  This assumption is only needed to prove \Cref{eq:Claim2} below.
	 
	Let \(D\) be the hierarchy constant introduced in \Cref{rem:hierarchyConstant}, and let $\kappa_1$ be the constant from \cite[Proposition~1.8]{BehrstockHagenSisto:HHS2}.  Let $\partial\fontact U = \braces{\alpha_{+}, \alpha_{-}}$.  For any domain $V$ which properly nests into $U$, the nesting axiom (\Cref{item:dfs_nesting}) gives that $\diam_{\fontact U}(\rho^V_U) \leq D$.  The hyperbolic space $\fontact U$ is a $Q$--quasi-line for some constant $Q$; we  may assume without loss of generality that $Q > 1/\sqrt{2}$. It follows that there is a constant $R_1 > 2D+\kappa_1$ such that the neighborhood $N_{R_1}(\rho^V_U)$ disconnects $\fontact U$.  Let $A_+(V)$ and $A_-(V)$ be the two connected components of $\fontact U \smallsetminus N_{R_1}(\rho^V_U)$ containing $\alpha_{+}$ and $\alpha_{-}$, respectively, and let $A_\pm(V) = A_{+}(V) \bigcup A_{-}(V)$ denote their union.  See Figure~\ref{fig:Prop32}.  Since $\fontact U$ is a path connected $Q$--quasi-line by assumption, we have  $\diam_{\fontact U}\pns{\fontact U \smallsetminus (A_\pm(V))} \leq 2\pns{Q^2R_1 + Q^2 + Q}$.   
	Let $R_2=2(Q^2R_1 + Q^2 + Q)$, and note that $R_2>D+\kappa_1$.  
	The bounded geodesic image axiom (\Cref{item:dfs:bounded_geodesic_image}) states that every geodesic segment in $A_{+}$ or $A_{-}$ projects to $\fontact V$ with diameter at most $D$.  Thus $\diam_{\fontact V}(\rho^U_V(A_{+}(V))) \leq 2D$ and $\diam_{\fontact V}(\rho^U_V(A_{-}(V))) \leq 2D$.  
	 Since the map $\rho^V_U$ is $G$--equivariant, we have $A_\pm(V')=hA_\pm(V)$ whenever $V'=hV$. 
	
	The proof follows by contradiction using the following two claims, each relying on the assumption that there is a domain properly nested into $U$ whose curve graph has infinite diameter.  

	\begin{description}
		\item[Claim 1] 
			If $V' \propnest U$ and $\fontact V'$ is unbounded, then for all $L > 0$ there is an unbounded domain $V \propnest U$ such that
			\begin{equation}\label[claim]{eq:Claim1}
				\dist_{_V}\pns{1, \rho^U_V(A_\pm(V))} > L.
			\end{equation}
		\item[Claim 2] 	
			If $V \propnest U$ and $\fontact V$ is unbounded, then for all $L > 0$ there is an element $h \in G$ such that 
			\begin{equation}\label[claim]{eq:Claim2}
				\dist_{_V}\pns{h, \rho^U_V(A_\pm(V))} > L \qquad \tand \qquad \dist_{_U}\pns{\rho^{hV}_U, \rho^V_U} > L.
			\end{equation}
	\end{description}

	We complete the proof assuming the claims, which will be addressed later.  Take $L= R_2$, and suppose there is a domain $V$ that properly nests into $ U$ such that $\fontact V$ has unbounded diameter.  By \Cref{eq:Claim1}, we may assume without loss of generality that $V$ satisfies \eqref{eq:Claim1}.  
The second statement of (\ref{eq:Claim2}) and	 \Cref{lem:closerhoimpliestrans}   give that $V \pitchfork hV$.  Since $G. U = U$, every element $g \in G$ acts on $\fontact U$ by isometries.  
We have $\diam_{\fontact U}(\fontact U \smallsetminus A_\pm(V)) = \diam_{\fontact U}(\fontact U \smallsetminus hA_\pm(V))  \leq R_2$,  and consequently the second statement of \Cref{eq:Claim2} implies that $\rho^{hV}_U \subset A_\pm(V)$ and $\rho^V_U \subset hA_\pm (V)$.  
	
	 The second statement of \Cref{eq:Claim2} and \cite[Proposition~1.8]{BehrstockHagenSisto:HHS2} show that $\diam_{\fontact V}(\rho^{hV}_V\cup \rho^U_V(\rho^{hV}_U))\leq \kappa_1$.  As $\rho^U_V(\rho^{hV}_U)\subseteq \rho^U_V(A_\pm(V))$,
	we thus have
\begin{equation}\label{eqn:Vdist}
	    \dist_{_V}\pns{h, \rho^{hV}_V} \geq \dist_{_V}\pns{ h, \rho^U_V(A_\pm(V)) } - \kappa_1
	    > R_2- \kappa_1 \geq D,
	    \end{equation}
 where the second to last inequality follows from the first statement of \Cref{eq:Claim2}.
Applying the fact that the projections $\rho$ are $G$--equivariant (Remark \ref{rem:rhocommutes}) to  \eqref{eq:Claim1} yields 
	$$ \dist_{_{hV}}\pns{h,\rho^U_{hV}(hA_\pm (V)) } > L = R_2. $$
	    Since $\rho_U^V \subseteq hA_{\pm}(V)$ and $\dist_{_V}(1, \rho_V^{U}(A_{\pm}(V)) =  \dist_{_{hV}}(h, \rho_{hV}^{U}(hA_{\pm}(V)) > R_2$, an analogous argument yields
\begin{equation}\label{eqn:hVdist}
	    \dist_{_{hV}}\pns{h, \rho^V_{hV}} \geq \dist_{_{hV}}\pns{ h, \rho^U_{hV}(hA_\pm(V)) } - \kappa_1 > R_2 - \kappa_1 \geq D.
\end{equation}

	However, the  inequalities \eqref{eqn:Vdist} and \eqref{eqn:hVdist} contradict the transversality and consistency axiom (\Cref{item:dfs_transversal}) applied to $h$ projected to $V$ and $hV$, which states that 
	$$ \min\braces{\dist_{_V}\pns{h, \rho^{hV}_V}, \dist_{_hV}\pns{h, \rho^V_{hV}} } \leq D. $$
	It remains to prove the two claims.  
	
	\begin{description}
		\item[Proof of \Cref{eq:Claim1}] 
			Let $L > 0$ be fixed, and consider \(A_{\pm}(V')\). We have that $\rho^U_{V'}(A_\pm(V'))$ has bounded diameter. 
			If $\dist_{_{V'}}\pns{ 1, \rho^U_{V'}(A_\pm(V'))} > L$ then we are done by taking $V = V'$.  Otherwise, $\dist_{_{V'}} \pns{1, \rho^U_{V'}(A_\pm(V')) } \leq L$.
			Since $\rho^U_{V'}(A_\pm(V'))$ is bounded and $\pi_{V'}$ is $D$--coarsely surjective there is an element $g^{-1} \in G$ so that 
			$ \dist_{_{V'}}\pns{ g^{-1}, \rho^U_{V'}(A_\pm(V'))} > L + D $.
			By equivariance, we can apply $g$ to get
			$ \dist_{_{gV'}}\pns{ 1, \rho^U_{gV'}(gA_\pm(V')) }>L+D. $
			Taking $V = gV'$ completes the claim.  
			
		\item[Proof of \Cref{eq:Claim2}]
			
			Let $L > D$ be fixed exceeding the hierarchy constant and $t \in G$ be an element acting by translation on $\fontact U$, which exists by assumption. Let \(\gamma\) be any isometry of $\fontact U$ that fixes the endpoints and 
			moves some point $x_0 \in \fontact U$ less than $L$.   Then there is a constant $\bar{L} \geq L$  depending only on the quasi-line constants of $\fontact U$ (and not on the choice of $\gamma$) such that $\gamma$ moves every point of $\fontact U$ by at most $\bar{L}$.
			\\
			Let \(\hat{G} \leq G\) be the index 2 subgroup of \(G\) that fixes \(\partial \fontact U\) pointwise. Note that $t$ acts as translation, and so  $t \in \hat{G}$.  Moreover, since \(G\) coarsely surjects onto \(\fontact U\), so does \(\hat{G}\). 
			Pick $M > 0$ so that $M\tau_0 > 2\bar{L} + D$, where $\tau_0$ is as in \Cref{lem:translengthbound}.  As before, coarse surjectivity guarantees the existence of an element $h' \in \hat G$ satisifying $$\dist_{_U} (h', \rho^U_V(A_\pm(V))) > \bar{L} + KM\vert t \vert + K, $$ where $\pi_V$ is $K$--coarsely Lipschitz and \(\vert t \vert\) is the word length of \(t\) in the fixed generating set.  If $\dist_{_U} (\rho^{h'V}_U, \rho^V_U) > L $ then we are done by taking $h = h'$, so assume $\dist_{_U} (\rho^{h'V}_U, \rho^V_U) \leq L$.
			
			Consider $h = h't^M$.  Using the fact  that  \(\pi_V\) is Lipschitz and the triangle inequality, we have
			\begin{align*}
				\dist_{_V}\pns{h, \rho^U_V(A_\pm(V)) }
					&\geq \dist_{_V}\pns{h', \rho^U_V(A_\pm(V)) } - \dist_{_V}\pns{h't^M, h'}
					\\
					&\geq (\bar L + KM\vert t \vert + K) - (KM\vert t \vert + K)
					\\
					&\geq \bar L 
					 \geq L.
			\intertext{Thus the first statement of the claim holds.  By the choice of $\bar{L}$, we have that \(\dist_{_U} (x, h'x) \leq \bar{L}\) for all \(x \in \fontact U\). Thus}
				\dist_{_U}\pns{ \rho^V_U, \rho^{hV}_U }
					&\geq \dist_{_U}\pns{\rho^{V}_U, h't^M \rho^{V}_U } - D \\
					& \geq 
					\dist_{_U}\pns{\rho^V_U, t^M\rho^{V}_U } - \dist_{_U} \pns{t^M\rho^{V}_U, h't^M\rho^{V}_U }- D
					\\
					&\geq (2\bar{L} + D) - \bar{L} - D
					\\
					& \geq \bar L \geq L,
			\end{align*} completing the proof of \Cref{eq:Claim2}.  \qedhere
	\end{description}
\end{proof}

Next, we give a sufficient condition for when a collection of pairwise orthogonal domains have associated hyperbolic spaces that are quasi-lines.

\begin{prop} \label{prop:quasilines}
Let \((G,\s)\) be a hierarchically hyperbolic group and $U\in \s$ a domain such that there is a pair of points $\alpha,\beta\in \partial \mc CU$ which are fixed pointwise by $G$.  Then $\fontact U$ is quasi-isometric to a line.
\end{prop}
\begin{proof}
	Let \(\gamma\) be a geodesic between the points \(\alpha, \beta \in \partial \fontact U\), and let \(h\in G\). We want to uniformly bound \(\dist_{_U}(h, \gamma)\). Since there exists \(C= C(\mf{S})\) such that \(\pi_U\) is \(C\)--coarsely surjective, this would prove the result. 
	Let \(g\in G\) be such that \(\dist_{_U}(g, \gamma)\leq C\), and consider \(hg^{-1} \gamma\). Since all the generators fix \(\alpha, \beta \in \partial \fontact U\), we have that \(hg^{-1}\gamma\) is a geodesic of \(\fontact U\) with the same endpoints as \(\gamma\). By the hyperbolicity of $\fontact U$, the Hausdorff distance between $\gamma$ and $hg^{-1}\gamma$ is uniformly bounded. Moreover, by equivariance of the map \(\pi_U\) we have \(\dist_{_U}(h, hg^{-1}\gamma)=\dist_{_U}(g,\gamma) \leq C\), which implies that $\dist_{_U}(h,\gamma)$ is uniformly bounded, concluding the proof.
\end{proof}

We end this section by describing domains which are transverse to a $G$--invariant domain whose associated hyperbolic space has infinite diameter.  

\begin{prop}\label{prop:G_inv_implies_corasely_surj}
Let \((G, \mf{S})\) be a hierarchically hyperbolic group and suppose  there is a \(G\)--invariant domain \(U \in \mf{S}\) such that \(\mathrm{diam} (\fontact U ) = \infty\).  For any $W\in\s$ satisfying \(W \pitchfork U\), the space \(\fontact W\) has uniformly bounded diameter. 
\end{prop}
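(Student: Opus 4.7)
The plan is to adapt the partial realization plus equivariance argument from the proof of \Cref{lem:infinitediameter}, replacing the role of nesting there with transversality here. The conceptual point is that Partial Realization (Axiom~\ref{item:dfs_partial_realization}) applied to the \emph{singleton} orthogonal family $\{U\}$ produces a single point $p \in \mc X$ whose projection to $\fontact V$ is uniformly close to $\rho^U_V$ \emph{for every} $V$ transverse to $U$. Combined with $G$-invariance of $U$, this lets us use the hierarchical isometry induced by $pg^{-1}$ to compare $\pi_W(g)$ to $\pi_W(p)$ uniformly in $g$, so $\pi_W(G)$ is bounded. Coarse surjectivity of $\pi_W$ (normalization) then forces $\fontact W$ to have bounded diameter.

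Concretely, I would first invoke Partial Realization with the orthogonal family $\{U\}$ at any basepoint $p_U \in \pi_U(\mc X)$ to produce $p \in G$ with $d_V(p, \rho^U_V) \leq \alpha$ for every $V \pitchfork U$. In particular $d_W(p, \rho^U_W) \leq \alpha$. Fix an arbitrary $g \in G$. Since $U$ is $G$-invariant, $pg^{-1}U = U$, and hence $pg^{-1}W$ is still transverse to $U$. Applying Partial Realization at $p$ again with $V = pg^{-1}W$ gives $d_{pg^{-1}W}(p, \rho^U_{pg^{-1}W}) \leq \alpha$.

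Next, I would apply the hieromorphism $\phi_{pg^{-1}} \colon \fontact W \to \fontact (pg^{-1}W)$ (an isometry, since $pg^{-1}$ acts by automorphisms of $\mathfrak S$) to the inequality $d_W(p, \rho^U_W) \leq \alpha$. Coarse equivariance of the $\pi$'s and $\rho$'s, together with $pg^{-1}U = U$, yields
\[
 d_{pg^{-1}W}\bigl(\pi_{pg^{-1}W}(pg^{-1}p),\ \rho^U_{pg^{-1}W}\bigr) \leq \alpha + O(1).
\]
Combining with the partial realization bound on $d_{pg^{-1}W}(p, \rho^U_{pg^{-1}W})$ via the triangle inequality and applying the inverse isometry $\phi_{gp^{-1}}$, and using $gp^{-1} \cdot pg^{-1}p = p$ and $gp^{-1} \cdot p = g$, one obtains $d_W(\pi_W(p), \pi_W(g)) \leq 2\alpha + O(1)$.

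Since the bound is independent of $g$, $\pi_W(G)$ lies in a bounded neighborhood of $\pi_W(p)$, and normalization gives the desired bound on $\operatorname{diam}(\fontact W)$; uniformity in $W$ follows because all constants in play ($\alpha$, the equivariance slack, and the normalization constant) depend only on the HHG structure and not on $W$, or equivalently from \Cref{rem:uniformbound}. The main piece of subtlety is simply the bookkeeping of coarse equivariance constants as one passes back and forth between $\fontact W$ and $\fontact (pg^{-1}W)$ via the hierarchical isometry, but this is exactly the same kind of bookkeeping already carried out in the proof of \Cref{lem:infinitediameter}, so no new ideas are required.
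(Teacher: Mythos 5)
Your proof is correct, but it takes a different route from the paper's. The paper works entirely inside the coordinate-system machinery of \Cref{sec:coordsys}: it considers the subset $Y \subseteq \Omega_\kappa$ of consistent tuples whose $W$--coordinate equals $\rho^U_W$ for every $W \pitchfork U$, observes that $Y$ (hence $\Psi(Y)$) is nonempty and $G$--invariant because $U$ is, and then uses coboundedness of the $G$--action on itself to conclude that $\Psi(Y)$ coarsely coincides with $G$; thus every point of $G$ projects close to $\rho^U_W$ and normalization finishes the argument. Your argument replaces this soft invariance-plus-coboundedness step with the explicit equivariance computation from \Cref{lem:infinitediameter}: a single partial realization point $p$ for the singleton family $\{U\}$ satisfies $d_V(p,\rho^U_V)\leq\alpha$ for \emph{all} $V\pitchfork U$ (third bullet of Axiom \ref{item:dfs_partial_realization}), and conjugating by $pg^{-1}$, which fixes $U$, pins $\pi_W(g)$ to within $2\alpha + O(1)$ of $\pi_W(p)$ for every $g$. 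The bookkeeping you describe is exactly the kind already carried out in \Cref{lem:infinitediameter} and goes through without issue; all constants are uniform in $W$, so the conclusion is uniform. What your version buys is self-containedness (it needs only the partial realization axiom, not the realization theorem or the metric on $\Omega_\kappa$); what the paper's version buys is brevity and the reusable principle that a nonempty $G$--invariant coarse subset of $G$ is coarsely all of $G$. Incidentally, neither argument actually uses the hypothesis $\operatorname{diam}(\fontact U)=\infty$ in an essential way, so its absence from your reasoning is not a gap.
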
 

\begin{proof}
Let $\Omega_\kappa\subset \Pi_{W\in\s}2^{\fontact W}$ and $\Phi\colon \Omega_\kappa\to 2^\X$ be as in \Cref{sec:coordsys}.
Let \(\kappa \geq \kappa_1\) and let \(Y\) be the subset of \(\Omega_\kappa\) consisting of all tuples whose \(W\)--coordinate is \(\rho_W^U\) for each \(W \pitchfork U\). Since \(\fontact U\) has infinite diameter, \(\Phi(Y)\) is an infinite diameter subset of \(G\). Moreover, since \(U\) is \(G\)--invariant, so are \(Y\) and \(\Phi(Y)\). Since \(G\) acts coboundedly on itself, we have that \(\Phi (Y)\) coarsely coincides with \(G\). Since \(\Phi\) is a quasi-isometry, we conclude that \(Y\) coarsely coincides with \(\Omega_\kappa\). Thus, the spaces \(\fontact W\) are uniformly bounded for every \(W \pitchfork U\).
\end{proof}

\section{Proof of Main Theorem}
\label{sec:proof}

\Cref{thm:mainthm} follows immediately from \Cref{shortFreeImpliesUEG} and the following theorem.  Recall that for any generating set $\genset$ and any \(n \geq 1\), we denote by $\genset^n$ the ball of radius $n$ about the identity in the Cayley graph of $G$ with respect to $\gener$.

\begin{theorem}
	\label{thm:technicalmainthm}
	Let $(G,\s)$ be a virtually torsion-free hierarchically hyperbolic group.  Then there exists a constant $M > 0$ depending only on $(G,\s)$  such that one of the following occurs.
	\begin{enumerate}[(a)]
	\item 
		$G$ is virtually abelian.
		
	\item 
		For any generating set $\genset$, there are elements $u,w \in \Ball{M}$ which form a basis for a free sub-semigroup.
		
	\item 
		There is a $G$--invariant collection $\overline{\mc B}$ of pairwise orthogonal domains such that  $G$ is quasi-isometric to $\Z^{|\overline{\mc B}|} \times E$, where $E$ is a non-elementary space.
		Moreover, $G$ has a generating set all of whose members act elliptically on $E$.
	\end{enumerate}
\end{theorem}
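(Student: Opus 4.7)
I would first pass to a finite-index torsion-free subgroup via \Cref{lem: passing to f.i.subgroup}, so that \Cref{rmk:nonemptybigset} guarantees every element has a nonempty big set; by \Cref{finiteIndexWordLength} any of the three outcomes for the subgroup lifts to $G$ after enlarging $M$. Given an arbitrary generating set $X$, the plan is to fix a uniform power $M_1$ (depending only on $(G,\s)$, the complexity bound $N$ of \Cref{prop:HHGisomClassification}, and $\tau_0$ from \Cref{lem:translengthbound}) so that for every $x \in X$ and every $U \in \B(x)$ the element $x^{M_1}$ setwise fixes $U$ and has translation length on $\fontact U$ exceeding the $10000\delta$ threshold of \Cref{BreuillardFujiwara:freeSemigroup}.

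\emph{Case 1:} there exist $x,y \in X$ and $U \in \B(x^{M_1})$, $V \in \B(y^{M_1})$ that are either distinct and non-orthogonal or equal with $x^{M_1},y^{M_1}$ having independent axes in $\partial \fontact U$. The aim is to play ping-pong in the Cayley graph. When $U = V$ with independent axes, \Cref{BreuillardFujiwara:freeSemigroup} applies directly on $\fontact U$. When $U$ and $V$ are distinct and non-orthogonal (transverse or nested), I would use the consistency and bounded geodesic image axioms to force the $\rho$--projections of the two axes to diverge, then apply a single short conjugation to separate fixed-point sets on the common hyperbolic space, again invoking \Cref{BreuillardFujiwara:freeSemigroup}. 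Both situations yield outcome (1). I expect \textbf{the main obstacle} to be exactly here: ensuring that the conjugator and the required powers remain uniformly bounded in $(G,\s)$ rather than in $X$, which is precisely what the uniform $\tau_0$ of \Cref{lem:translengthbound} and the uniform choice of $M_1$ are designed to enable.

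\emph{Case 2:} Case 1 fails, so $\overline{\mc B} := \bigcup_{x \in X} \B(x^{M_1})$ is pairwise orthogonal, and for each $U \in \overline{\mc B}$ all generators acting loxodromically on $\fontact U$ share the same pair of endpoints in $\partial \fontact U$; \Cref{prop:quasilines} then makes each such $\fontact U$ a uniform quasi-line. After passing to a further finite-index subgroup $G_0$ individually stabilizing every $U \in \overline{\mc B}$ (using that $\s$ has finitely many $G$--orbits), \Cref{lem:infinitediameter}, \Cref{quasilinesNestMinimal}, and \Cref{prop:G_inv_implies_corasely_surj} combine to force every unbounded domain to lie in $\overline{\mc B}$ or be orthogonal to all of $\overline{\mc B}$. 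Since $S$ is $\nest$--maximal, hence orthogonal to nothing, either $\overline{\mc B} = \{S\}$ -- in which case $\fontact S$ is a quasi-line and $G$ is virtually $\Z$, giving outcome (2) -- or $\fontact S$ is bounded. In the bounded case, \Cref{prop:gprod} applied to the partition $\overline{\mc B} \sqcup \mc R$ of unbounded domains (with $\mc R$ the part orthogonal to $\overline{\mc B}$) yields $G$ quasi-isometric to $Y_1 \times \cdots \times Y_{|\overline{\mc B}|} \times E$; the distance formula restricted to $\{U_i\}$ together with $\fontact U_i$ being a quasi-line forces each $Y_i$ to be a quasi-line, so $G$ is quasi-isometric to $\Z^{|\overline{\mc B}|} \times E$. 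Since each generator has its big set inside $\overline{\mc B}$, it acts elliptically on $E$, giving the moreover clause of outcome (3); when $E$ is bounded we collapse to outcome (2). The remaining configuration $|\overline{\mc B}| = 1$ with unique element $\neq S$ (so $\fontact S$ would still be unbounded) is ruled out by the structural constraints above, or else handled by constructing uniformly short independent loxodromic isometries of $\fontact S$ to obtain outcome (1).
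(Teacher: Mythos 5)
Your overall architecture mirrors the paper's (pass to a torsion-free finite-index subgroup, dichotomize on whether the big sets of short elements produce non-orthogonal domains, and in the orthogonal case run the quasi-line/product machinery), but there is a genuine gap at the hinge of the dichotomy. In your Case 2 you assert that the failure of Case 1 makes $\overline{\mc B}:=\bigcup_{x\in X}\B(x^{M_1})=\bigcup_{x\in X}\B(x)$ a $G$--invariant collection, and you propose to repair any failure of invariance by ``passing to a further finite-index subgroup $G_0$ individually stabilizing every $U\in\overline{\mc B}$ (using that $\s$ has finitely many $G$--orbits).'' Neither step works. Pairwise orthogonality of $\bigcup_{x\in X}\B(x)$ says nothing about invariance, and the stabilizer of a single domain is in general of \emph{infinite} index (finitely many $G$--orbits on $\s$ means the orbit space is finite, not that orbits are finite); think of the stabilizer of a subsurface in a mapping class group. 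Since every subsequent structural input you invoke --- \Cref{lem:infinitediameter}, \Cref{quasilinesNestMinimal}, \Cref{prop:G_inv_implies_corasely_surj}, and the well-definedness of the $\Sym$--kernel --- requires a genuinely $G$--invariant collection, Case 2 collapses without it. The paper's fix is the content of \Cref{prop: basic combinatorics}: one must saturate, setting $\overline{\mc B}=X^N.\mc B$, and observe that if this saturated set is not $X$--invariant then $X^k.\mc B\subsetneq X^{k+1}.\mc B$ for all $k\le N$, forcing $|X^N.\mc B|\ge N+1$, which is incompatible with pairwise orthogonality. The price is that the ``short elements'' of Case 1 are conjugates $g s g^{-1}$ with $|g|_X\le N$, hence of length $2N+1$ rather than $1$; your Case 1, which quantifies only over powers of generators, does not capture these conjugates, so its negation is strictly weaker than what Case 2 needs.

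A second, smaller gap: to apply \Cref{prop:quasilines} you need the endpoint pair $\{s_U^+,s_U^-\}$ to be preserved by \emph{all} generators of the finite-index subgroup, not merely by those acting loxodromically on $\fontact U$. An elliptic generator can move the endpoints, and your Case 1 does not detect this. The paper handles it explicitly: if some generator $t$ fails to preserve $\{s_U^+,s_U^-\}$, then $t^{-1}s_U^Kt$ is an independent loxodromic on $\fontact U$ of uniformly bounded length and uniformly bounded-below translation length, and \Cref{BreuillardFujiwara:freeSemigroup} returns outcome (1). You would need to add this branch. Finally, your treatment of the case $S\in\overline{\mc B}$ is vaguer than the paper's \Cref{prop:SinB}, which uses acylindricity of the action on $\fontact S$ together with \Cref{Fujiwara:freeSubgroup} and the Dahmani--Guirardel--Osin criterion to conclude ``free subgroup or virtually cyclic''; but this is a matter of detail rather than a wrong idea.
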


For the remainder of the paper, we fix a finitely generated torsion-free hierarchically hyperbolic group $G$ and a generating set \(\gener\) for \(G\), with the convention that \(\gener\) contains the identity. 

Recall that \(N\) is the maximal number of pairwise orthogonal domains of \(G\).
Let
\[  
	\mc B = \bigcup\limits_{s\in \gener}\B(s)
\]
be the collection of domains onto whose associated hyperbolic spaces the axes of the generators have unbounded projection 
and let
\begin{equation}\label{eqn:Bbar}
\bar{\mc B} = \gener^N . \mc B
\end{equation}
be the set of images of these domains under words of length at most $N$.  Note that since $\genset$ is finite and $|\B(s)|\leq N$ for all $s\in\genset$, it is always the case that $\overline{\mc B}$ is a finite set.  Moreover, since $G$ is torsion-free, $\bigset{s}$ is non-empty for every $1 \neq s\in X$ (see \Cref{rmk:nonemptybigset}), and therefore $\mc B\neq\emptyset$.

After first passing to a torsion-free finite index subgroup, the proof of \Cref{thm:technicalmainthm} will be divided into two main cases using the following proposition. 

\begin{prop}\label{prop: basic combinatorics}
  	Let \(G\) be a torsion-free hierarchically hyperbolic group, \(\gener\)  a generating set for \(G\) containing the identity, and
  	\(N \)  the maximal number of pairwise orthogonal domains. Then one of the following holds. 
   	\begin{enumerate}
		\item 
		\label{item: combinatorial cases, find non orthogonal}
		There are elements \(s,t \in \gener^{2N+1}\) such that \(\bigset{s}\) and \(\bigset{t}\) contain two non-orthogonal elements;
		\item 
		\label{item: combinatorial cases, fixed set}
		The set $\bar{\mc B}$ defined in (\ref{eqn:Bbar}) is a finite collection of pairwise orthogonal domains stabilized by $G$ in the action on $\domains$.
		
   		\end{enumerate}
		Moreover, if Item \ref{item: combinatorial cases, fixed set} holds, then there is a finite index subgroup, $\hat{G} \leq G$, of index at most $N!$ fixing $\bar{\mc B}$ pointwise.
\end{prop}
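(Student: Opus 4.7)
I would prove the dichotomy by contrapositive: assuming Case~\ref{item: combinatorial cases, find non orthogonal} fails, I would derive Case~\ref{item: combinatorial cases, fixed set} together with the moreover clause.

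The moreover clause is the easier half.  Granting that $\bar{\mc{B}}$ is a pairwise orthogonal $G$--stabilized collection, the bound $|\bar{\mc{B}}| \le N$ is immediate from the definition of $N$ as the maximum size of a pairwise orthogonal family in $\s$.  The $G$--action on the finite set $\bar{\mc{B}}$ then induces a homomorphism $\varphi \colon G \to \operatorname{Sym}(\bar{\mc{B}})$, and $\hat{G} := \ker \varphi$ is the required finite-index subgroup, of index at most $|\bar{\mc{B}}|! \le N!$, fixing $\bar{\mc{B}}$ pointwise by construction.

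For the main dichotomy, suppose Case~\ref{item: combinatorial cases, find non orthogonal} fails, so that for every $s,t \in \gener^{N}$ the sets $\bigset{s}$ and $\bigset{t}$ contain no non-orthogonal pair.  Since $\gener \subseteq \gener^{N}$, the initial collection $\mc{B} = \bigcup_{s \in \gener} \bigset{s}$ is already pairwise orthogonal, hence $|\mc{B}| \le N$; also $\mc{B}$ is nonempty because $G$ is torsion-free and Remark~\ref{rmk:nonemptybigset} applies.  I would then analyze the nondecreasing filtration $\mc{B}_k := \gener^{k} . \mc{B}$, so that $\mc{B}_0 = \mc{B}$ and $\mc{B}_N = \bar{\mc{B}}$, with the goal of showing that each $\mc{B}_k$ remains pairwise orthogonal.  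The argument combines two observations: first, $G$ preserves the orthogonality relation on $\s$, so the image of any pairwise orthogonal family under a group element is itself pairwise orthogonal; second, any putative non-orthogonal pair inside $\mc{B}_k$ can be translated by an appropriate group element back to a non-orthogonal pair in $\bigset{s} \cup \bigset{t}$ for some $s, t \in \gener^{N}$, contradicting the failure of Case~\ref{item: combinatorial cases, find non orthogonal}.  Once each $\mc{B}_k$ is pairwise orthogonal, the bound $|\mc{B}_k| \le N$ holds uniformly.  The nondecreasing integer sequence $|\mc{B}_k|$ can strictly increase at most $N-1$ times, so $\mc{B}_{k^{*}} = \mc{B}_{k^{*}+1}$ for some $k^{*} \le N - 1$; iterating monotonicity then yields $\gener . \mc{B}_{k^{*}} = \mc{B}_{k^{*}}$ and hence $G . \mc{B}_{k^{*}} = \mc{B}_{k^{*}}$.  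In particular $\bar{\mc{B}} = \mc{B}_N = \mc{B}_{k^{*}}$ is $G$--invariant, establishing Case~\ref{item: combinatorial cases, fixed set}.

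The main obstacle will be executing the pullback argument precisely.  Writing an element of $\bar{\mc{B}}$ as $w.U$ with $w \in \gener^{N}$ and $U \in \bigset{s}$, $s \in \gener$, places it inside $\bigset{wsw^{-1}}$, but the conjugate $wsw^{-1}$ has $\gener$--word length up to $2N+1$, which exceeds $N$ and so falls outside the direct scope of the hypothesis.  The key technical point is to use $G$--equivariance of orthogonality together with a careful choice of inductive representatives along the filtration to pull any non-orthogonality witnessed deep in $\mc{B}_k$ back to the level of $\mc{B}$, where it produces the required non-orthogonal pair of big sets indexed by elements of $\gener^{N}$.
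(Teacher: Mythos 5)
Your overall architecture is the same as the paper's: the ``moreover'' clause via the permutation representation $G\to\Sym(\bar{\mc B})$ is identical, and your stabilization argument for the nondecreasing chain $\gener^k.\mc B$ is the same pigeonhole count the paper runs (the paper phrases it in the other direction: if $\bar{\mc B}$ is not $G$--invariant then every inclusion $\gener^k.\mc B\subseteq\gener^{k+1}.\mc B$ is strict, forcing $|\bar{\mc B}|\geq N+1$ and hence a non-orthogonal pair in $\bar{\mc B}$; it does not need to check that the intermediate sets $\gener^k.\mc B$ are pairwise orthogonal, only the final one).

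The ``obstacle'' you flag at the end, however, should not be attacked the way you propose. No choice of representatives will pull a non-orthogonal pair $V_1=g_1U_1$, $V_2=g_2U_2$ back to big sets of elements of $\gener^{N}$: translating by $g_1^{-1}$ only makes the second conjugate longer ($g_1^{-1}g_2s_2g_2^{-1}g_1$ has length up to $4N+1$), and the orthogonality relation gives you no way to shorten. The resolution is that the bound ``$s,t\in\gener^{N}$'' in Item \ref{item: combinatorial cases, find non orthogonal} is not what is actually proven or needed: the paper's proof takes $s=g_1s_1g_1^{-1}$ and $t=g_2s_2g_2^{-1}$ with $V_i\in\bigset{g_is_ig_i^{-1}}$ and $|g_is_ig_i^{-1}|_\gener\leq 2N+1$, and every downstream use of Item \ref{item: combinatorial cases, find non orthogonal} (Propositions \ref{case:TransverseDomains} and \ref{case:ProperNesting}) assumes only $s,t\in\gener^{2N+1}$. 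So you should simply state and prove the dichotomy with the bound $2N+1$; with that adjustment your contrapositive runs correctly (the failure of Item \ref{item: combinatorial cases, find non orthogonal} at level $2N+1$ does make every $\gener^k.\mc B$, $k\leq N$, pairwise orthogonal, since its elements lie in big sets of conjugates of length at most $2k+1$). One further small point: in passing from $\gener.\mc B_{k^*}=\mc B_{k^*}$ to $G$--invariance you should note that each generator acts injectively on $\frakS$, so by finiteness of $\mc B_{k^*}$ it acts bijectively on it and the inverse generators preserve it as well.
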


\begin{proof}
Suppose that Item~\ref{item: combinatorial cases, fixed set} does not hold. Then either $\overline{\mc B}$ contains two non-orthogonal elements or $\overline{\mc B}$ is not a $G$--invariant set.
Suppose that $\overline{\mc B}$ is not $G$--invariant.  Then $X$ does not fix $\overline{\mc B}=X^N.\mc B$ setwise, and thus $X$ does not  fix $X^k.\mc B$ setwise for any $1\leq k\leq N$.  Hence
for each \(1\leq k \leq N\), 
\[
\gener^{k}. \mc{B} \neq \gener^{k+1}. \mc{B}.
\] 
Since the identity is contained in \(\gener\), we have
\[ \gener^{k}. \mc{B} \subseteq \gener^{k+1}. \mc{B}.\]
In particular, since $\mc B\neq\emptyset$, this implies that \(|\gener^{N}. \mc{B} | \geq  N +1\). However, this is a contradiction, as there can be at most \(N\) pairwise orthogonal elements. We conclude that if Item \ref{item: combinatorial cases, fixed set} does not hold, then there must be non-orthogonal domains \(V_1, V_2 \in \overline{\mc B}\). Thus for $i=1,2$ there are generators \(s_i \in \gener\), domains \(U_i \in \bigset{s_i}\) and elements \(g_i \in \gener^{N}\) such that 
\[V_i = g_i U_i.\]
This implies that \(V_i \in \bigset{g_i s_i g_i^{-1}}\). If we denote the word length with respect to the generating set $X$ by \(| \cdot |_\gener\), we have \[|g_i s_i g_i^{-1}|_\gener \leq |g_i|_\gener + |s_i|_\gener + |g_i^{-1}|_\gener \leq 2N +1,\] and  Item~\ref{item: combinatorial cases, find non orthogonal} follows by setting $s=g_1s_1g_1^{-1}$ and $t=g_2s_2g_2^{-1}$. 

Finally, suppose  Item~\ref{item: combinatorial cases, fixed set} holds, that is, suppose that $\bar{\mc B}$ is a finite collection of pairwise orthogonal domains stabilized by $G$ in the action on $\domains$.  By definition of \(N\), we have $|\bar{\mc B}| \leq N$.  This induces a map to the symmetric group $G \to \Sym(N)$ whose kernel is a subgroup of $G$ of index at most $N!$ fixing $\bar{\mc B}$ pointwise, which establishes the final statement of the proposition.
\end{proof}

We address the two cases of \Cref{prop: basic combinatorics} in separate subsections. 
		\subsection{Case 1}\label{sec:case1}
Assume that  Item \ref{item: combinatorial cases, find non orthogonal} of \Cref{prop: basic combinatorics} holds, that is, there exist elements $s,t\in\Ball{2N+1}$ and domains $U\in\B(s)$ and $V\in\B(t)$ such that $U\not\perp V$. 
There are two possibilities in this case: either $U\trans V$ or $U\propnest V$ (the case $V\propnest U$ is completely analogous).  We deal with each possibility in a separate proposition and will demonstrate that in each case there are uniform powers of $s$ and $t$ which generate a free subgroup.

\begin{prop} 
	\label{case:TransverseDomains}
		Let $s$ and $t$ be a pair of axial HHS automorphisms with domains $U\in \B(s)$ and $V\in\B(t)$ such that $U\trans V$.  There exists a constant $k_1$ depending only on $(G,\s)$ such that $
	\abrackets{ s^{k_1},t^{k_1} } \isom F_2$.
\end{prop}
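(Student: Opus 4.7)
The plan is a ping-pong argument on the Cayley graph of $G$. First I would invoke Proposition~\ref{prop:HHGisomClassification}(3) to replace $s$ and $t$ by the powers $s^M$ and $t^M$ with $M \leq N!$, so that $s^\diamond U = U$ and $t^\diamond V = V$. After this replacement the elements still have word length bounded by $(2N+1)N!$, a constant depending only on $(G,\s)$. By equivariance, $s$ and $t$ now induce isometries $s^{\ast}_U$ of $\fontact U$ and $t^{\ast}_V$ of $\fontact V$; since $U \in \B(s)$ and $V\in\B(t)$, these isometries are loxodromic, with stable translation length bounded below by a constant depending only on $(G,\s)$ by Lemma~\ref{lem:translengthbound}.

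Next I would build four pairwise disjoint ping-pong sets in $G$ using the consistency inequality for the transverse pair $U \trans V$. Let $D$ be the hierarchy constant, and let $\alpha \subset \fontact U$, $\beta \subset \fontact V$ be quasi-axes for $s^{\ast}_U$ and $t^{\ast}_V$ with boundary endpoints $\alpha_\pm$, $\beta_\pm$. Fix $R \gg D$; using $\delta$--hyperbolicity of $\fontact U$, the complement $\fontact U \setminus N_R(\rho^V_U)$ decomposes into two coarsely disjoint regions $H^+, H^-$, each containing a cofinal subray of $\alpha$ converging to $\alpha_\pm$, and analogously define $K^\pm \subset \fontact V$ relative to $\rho^U_V$. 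Set
\[
Y_s^\pm = \{g \in G \ :\ \pi_U(g) \subset H^\pm\}, \qquad Y_t^\pm = \{g \in G \ :\ \pi_V(g) \subset K^\pm\}.
\]
The consistency inequality $\min\{d_U(g, \rho^V_U),\ d_V(g, \rho^U_V)\} \leq \kappa_0$ of Axiom~4 in Definition~\ref{defn:HHS}, together with $R > \kappa_0$, guarantees that $Y_s^+ \cup Y_s^-$ is disjoint from $Y_t^+ \cup Y_t^-$; within each pair disjointness is immediate from the construction of $H^\pm$ and $K^\pm$.

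Finally I would choose $k_1$, depending only on $(G,\s)$, large enough that each of $s^{\pm k_1}$ and $t^{\pm k_1}$ sends the complement of its repelling set into its attracting set. By equivariance, $\pi_U(s^{k_1} g) = (s^{\ast}_U)^{k_1} \pi_U(g)$; for $g \notin Y_s^-$, the point $\pi_U(g)$ lies in $H^+ \cup N_R(\rho^V_U)$, a region whose nearest-point projection to $\alpha$ is uniformly bounded on the $\alpha_-$ side, so sufficiently many iterates of the loxodromic translation $s^{\ast}_U$ push $\pi_U(g)$ deep into $H^+$, placing $s^{k_1} g \in Y_s^+$. The symmetric statements for $s^{-k_1}$ and $t^{\pm k_1}$ are identical, and the ping-pong lemma then yields $\langle s^{k_1}, t^{k_1} \rangle \cong F_2$. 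The main obstacle is quantitative: the decomposition $H^\pm$ is only coarse, so producing an explicit $k_1$ that works uniformly over every starting point in the bounded region $N_R(\rho^V_U)$ requires careful use of $\delta$--hyperbolicity of $\fontact U$ together with the uniform translation length bound from Lemma~\ref{lem:translengthbound}, and verifying that the resulting $k_1$ depends only on the hierarchy constants of $(G,\s)$.
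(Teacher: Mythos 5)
Your overall strategy is the right one and is essentially the paper's: stabilize $U$ and $V$ by passing to a uniformly bounded power, get a uniform lower bound on translation length from Lemma~\ref{lem:translengthbound}, and play ping-pong in the Cayley graph using the consistency inequality for the transverse pair $U \pitchfork V$. However, there is a genuine gap in your construction of the ping-pong sets. You assert that $\fontact U \setminus N_R(\rho^V_U)$ ``decomposes into two coarsely disjoint regions $H^+, H^-$''. This is false for a general $\delta$--hyperbolic space: removing a bounded neighbourhood of the bounded set $\rho^V_U$ from, say, a heavily branching tree or the hyperbolic plane does not separate it into two pieces, and there is no reason for $\rho^V_U$ to lie anywhere near the axis $\alpha$ of $s$. (The paper does use exactly this halfspace construction in Proposition~\ref{quasilinesNestMinimal}, but only under the hypothesis that $\fontact U$ is a quasi-line.) The failure propagates to the key dynamical step: your claim that $g \notin Y_s^-$ forces $\pi_U(g) \in H^+ \cup N_R(\rho^V_U)$ requires $H^+ \cup H^- \cup N_R(\rho^V_U)$ to cover $\fontact U$, which it does not, so you cannot control where $s^{k_1}$ sends an arbitrary point outside the repelling set.

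The four-set Tits ping-pong, with its attracting and repelling directions, is not needed here, and dropping it removes the gap. The paper uses only two sets, $Y_s = \{g \in G : d_U(g,\rho^V_U) > \kappa_0\}$ and $Y_t = \{g \in G : d_V(g,\rho^U_V) > \kappa_0\}$. Consistency (Axiom~4 of Definition~\ref{defn:HHS}) makes them disjoint, and for $g \in Y_s$ it forces $d_V(g,\rho^U_V) \leq \kappa_0$; since $t$ acts on $\fontact V$ with translation length at least $\tau_0$, \emph{every} nonzero power $t^{nk}$ with $k \geq 2\kappa_0\tau_0^{-1}$ moves $\pi_V(g)$ by at least $2\kappa_0$ and hence sends $g$ into $Y_t$, and symmetrically for $s$. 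Because this displacement estimate holds for all nonzero powers simultaneously, the two-set ping-pong already yields a free group of rank two, with no need to track which end of an axis a point is pushed toward. If you wanted to keep your halfspace picture, you would have to redefine $H^\pm$ via nearest-point projection to $\alpha$ and intersect with the set $\{d_U(\cdot,\rho^V_U)>\kappa_0\}$ to retain disjointness from $Y_t^\pm$, at which point you are doing strictly more work than the two-set argument for no gain.
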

\begin{proof}
	By passing to a uniform power $(2N+1)!$, 
	we may assume that $\B(s)$ and $\B(t)$ are fixed pointwise by $s$ and $t$, respectively. 
	
	Let $\kappa_0$ be the constant from \Cref{item:dfs_transversal} (Transversality) of \Cref{defn:HHS}.  We will apply the ping-pong lemma to the following subsets of $G$:
	\[
		Y_s = \braces{ x\in G : \dist_{U}\pns{\pi_U(x), \rho^V_U} > \kappa_0 }
	\qquad
	\tand  
	\qquad
		Y_t = \braces{ x\in G :  \dist_{V}\pns{\pi_V(x), \rho^U_V} > \kappa_0 }.
	\]

	Transversality and consistency imply that these sets are disjoint.  Note that for all $W,T \in \s$, the projection map $\pi_W: G \to \fontact W$ is coarsely surjective and $\rho^T_W$ is a bounded subset of $\fontact W$ whenever $T\trans W$. Since $\fontact U$ and $\fontact V$ are infinite diameter, this implies that $Y_s$ and $Y_t$ are non-empty.
	
	Let $\displacementBound$ be the minimal translation length from \Cref{lem:translengthbound}.  Fix a constant $k \geq 2\kappa_0\displacementBound^{-1}$ and a point $x \in Y_s$.  By  transversality and consistency, we have $\dist_{_V}(x,\rho^U_V)\leq \kappa_0$. Using this fact in addition to \Cref{lem:translengthbound} and the triangle inequality, we have
	\begin{align*}
    	\dist_{_V}\pns{\rho^U_V, t^{k(2N+1)!}. x} 
    	&\geq \dist_{_V}\pns{x, t^{k(2N+1)!}. x} - \dist_{_V}\pns{x, \rho^U_V} \\
    	&\geq \displacementBound\abs{k} - \dist_{_V}\pns{x, \rho^U_V}\\
    	&\geq 2\kappa_0 - \kappa_0  \\
    	&= \kappa_0
	\end{align*}

Thus $t^{k(2N+1)!}. x\in Y_t$, and so $t^{k(2N+1)!}(Y_s)\subseteq Y_t$. Observe that the only requirement for \(k\) was \(k \geq 2\kappa_0\displacementBound^{-1}\). In particular, the conclusion holds for multiples of \(k\). The same argument works with negative powers, so $t^{-k(2N+1)!}(Y_s)\subseteq Y_t$.  By a symmetric argument, it follows that $s^{\pm k(2N+1)!}(Y_t)\subseteq Y_s$.  Thus, by the ping-pong lemma $\abrackets{s^{k(2N+1)!}, t^{k(2N+1)!}} \isom F_2$. Setting $k_1=2\kappa_0\tau_0^{-1}(2N+1)!$ completes the proof.
\end{proof}

We note that in the previous proposition (and in many of the later results), if we allow $s$ and $t$ to have different exponents, then we can find smaller constants $k_{1,s}$ and $k_{1,t}$ such that $\abrackets{ s^{k_{1,s}},t^{k_{1,t}} } \isom \F_2$.  In particular, we may take $k_{1,s}=2\kappa_0\displacementBound^{-1}m_s$ and $k_{1,t}=2\kappa_0\displacementBound^{-1}m_t$, for some $m_s,m_t\leq N$. Also, the stabilization power $(2N+1)!$ is not optimal since it is given by the kernel of a map from a copy of $\Z$ to a cyclic subgroup of $\Sym(2N+1)$, which can have size at most $LCD(1,2, \dotsc, 2N+1) $, which grows slower than factorial.
For ease of notation, however, we choose to use the larger uniform exponent.

We now turn to the second possibility in Case 1.

\begin{prop} 
	\label{case:ProperNesting}
	Let $s$ and $t$ be a pair of axial HHS automorphisms with domains $U\in \B(s)$ and $V\in \B(t)$ such that $U$ is properly nested in $V$.
	There exist constants $k_2$ and $n_0$ depending only on $(G,\s)$ such that $\abrackets{ s^{k_2}, t^{n_0} s^{k_2} t^{-n_0} } \isom \F_2$. 
\end{prop}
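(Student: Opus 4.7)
The strategy is to reduce this case to the transverse case already handled in Proposition \ref{case:TransverseDomains} by conjugating $s$ by a power of $t$ to produce a second loxodromic whose associated domain is transverse to $U$. The underlying geometric picture: $t$ acts loxodromically on $\fontact V$, so pushing $U$ along $V$ by $t^{n_0}$ moves the projection $\rho^U_V$ far from its original position; once that displacement exceeds the threshold of Lemma \ref{lem:closerhoimpliestrans}, the pushed copy $t^{n_0} U$ becomes transverse to $U$, and we can ping-pong as in the previous proposition.

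First, I pass to a uniform power. By Proposition \ref{prop:HHGisomClassification}, there is a constant $M \leq N!$ such that $s^M$ fixes $\B(s)$ pointwise and $t^M$ fixes $\B(t)$ pointwise; replacing $s$ by $s^M$ and $t$ by $t^M$, every power of the new $t$ fixes $V$, and by Lemma \ref{lem:translengthbound}, $s$ and $t$ act on $\fontact U$ and $\fontact V$ respectively with translation length at least $\tau_0$ (after absorbing $M$). Next, I exploit the loxodromic action of $t$ on $\fontact V$: since $U \propnest V$, the set $\rho^U_V \subset \fontact V$ has diameter at most $D$, and by equivariance of the hieromorphism (Definition \ref{item:dfs_hieromorphism}), the projection $\rho^{t^n U}_V$ coarsely coincides with the image of $\rho^U_V$ under the isometry of $\fontact V$ induced by $t^n$. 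Since this isometry translates by at least $\tau_0$ per step, there is an $n_0$ depending only on $D$ and $\tau_0$ (hence only on $(G,\s)$) with
$$d_V\!\left(\rho^U_V, \rho^{t^{n_0} U}_V\right) > 2D.$$
Lemma \ref{lem:closerhoimpliestrans} then yields $U \trans t^{n_0} U$.

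Finally, I apply the argument of Proposition \ref{case:TransverseDomains} to the pair $(s, \, t^{n_0} s t^{-n_0})$: the first element acts loxodromically on $\fontact U$ with translation length at least $\tau_0$, and the second acts loxodromically on $\fontact{t^{n_0} U}$ with the same translation length, and these two domains are transverse by the previous step. The ping-pong argument produces a uniform power $k_2$ (depending only on $(G,\s)$) such that
$$\abrackets{s^{k_2},\, (t^{n_0} s t^{-n_0})^{k_2}} = \abrackets{s^{k_2},\, t^{n_0} s^{k_2} t^{-n_0}} \isom \F_2.$$

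The main point of care is Step 2: one has to correctly invoke the equivariance $\rho^{t^n U}_V \approx t^n \cdot \rho^U_V$ through the hieromorphism axioms (the left side is a projection, while the right side uses the induced isometry of $\fontact V$), and one must choose $n_0$ in terms of the hierarchy constants $D$ and $\tau_0$ rather than in terms of the individual element $t$, so that $n_0$ is indeed uniform. A secondary observation is that the proof of Proposition \ref{case:TransverseDomains} goes through unchanged when one of the inputs is a conjugate of bounded form rather than an element of $\genset^{2N+1}$: its conclusion only depends on having loxodromic actions with translation length $\geq \tau_0$ on a transverse pair of domains, after stabilizing the respective big sets by a uniform power.
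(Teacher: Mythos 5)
Your proposal is correct and follows essentially the same route as the paper: use the loxodromic action of $t$ on $\fontact V$ together with the coarse equivariance $\rho^{t^{n}U}_V \approx t^{n}\rho^U_V$ to choose a uniform $n_0$ (the paper takes $n_0 \geq 10D\tau_0^{-1}$) making $d_V(\rho^U_V,\rho^{t^{n_0}U}_V)$ large, invoke Lemma \ref{lem:closerhoimpliestrans} to get $U\trans t^{n_0}U$, and then feed the pair $(s,\,t^{n_0}st^{-n_0})$ into Proposition \ref{case:TransverseDomains}. Your explicit passage to the power $M\leq N!$ so that $t$ genuinely fixes $V$ (needed for $\rho^{t^{n}U}_V$ to make sense) is a point the paper glosses over, and your closing remark that the transverse argument only needs bounded word length and translation length $\geq\tau_0$ matches the paper's "replace $2N+1$ with $2n_0+1$" adjustment.
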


\begin{proof}
	Since $U$ is properly nested into $V$, the projection $\rho^U_V$ in $\fontact V$ satisfies $\diam_{\fontact V}\pns{\rho^U_V} \leq D$.  
	Recall that  \(\dist_{_V}(t^i. \rho^U_V,\rho^{t^i. U}_V)\leq \kappa_0\) for all $i$.  By \Cref{lem:translengthbound}, there is a uniform power $n_0$ of $t$ such that \(\dist_{_V}(\rho^{t^{n_0} . U}_V, \rho_V^U)\geq 10D\).  In particular, we can take any $n_0 \geq 10D\displacementBound^{-1}$.  By \Cref{lem:closerhoimpliestrans}, this implies that  $(t^{n_0}. U) \trans U$.  Applying \Cref{case:TransverseDomains} to the pair \(s, t^{n_0}st^{-n_0}\) and replacing $2N+1$ with $2n_0+1$ yields the desired constant $k_2$, which completes the proof. 
\end{proof}

		\subsection{Case 2}\label{sec:case2}
		Recall that   
 \[\mc B=\bigcup_{s\in \genset} \B(s),\qquad \overline{\mc B}=\genset^N. \mc B,\] and \[\hat{G}= \ker \pns{ G \to \Sym(N) }.\]
We now suppose that Item \ref{item: combinatorial cases, fixed set} of \Cref{prop: basic combinatorics} holds, that is, $\overline{\mc B}$ is a finite collection of pairwise orthogonal domains which is stabilized by the action of $G$ on $\s$ and fixed pointwise by the action of $\hat G$ on $\s$.

\begin{prop} \label{prop:SinB}
Suppose $\fontact S$ has infinite diameter.  Then either there exists a constant $k_3$ depending only on $(G,\s)$ and elements $s,t\in X$ 
such that $\langle s^{k_3},ts^{k_3}t^{-1}\rangle\cong F_2$ or $G$ is virtually cyclic.
\end{prop}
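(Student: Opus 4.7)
\emph{Plan.} I plan to leverage the Case 2 hypothesis together with $\fontact S$ being unbounded to pin down $\bar{\mc B} = \{S\}$, from which every generator acts loxodromically on $\fontact S$. After that, either a short pair of generators witnesses non-commensurable axes, in which case Fujiwara's theorem produces the desired free subgroup of rank two, or every generator normalizes a common cyclic subgroup, which forces $\fontact S$ to be a quasi-line and $G$ to be virtually cyclic.

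\emph{Pinning down $\bar{\mc B}$.} The domain $S$ is $G$-invariant as the unique $\nest$-maximal element, and every other domain is properly nested into it. I would apply \Cref{lem:infinitediameter} with $\mc U = \bar{\mc B}$ to the unbounded domain $S$: if $S \notin \bar{\mc B}$, the lemma would force $U \not\nest S$ for every $U \in \bar{\mc B}$, contradicting maximality of $S$. So $S \in \bar{\mc B}$, and pairwise orthogonality of $\bar{\mc B}$ together with $S$ being $\nest$-comparable to every other domain gives $\bar{\mc B} = \{S\}$. Then $\mc B = \{S\}$ as well (it is nonempty by \Cref{rmk:nonemptybigset} since $G$ is torsion-free), so every generator $x \in X$ has $\B(x) = \{S\}$ and acts loxodromically on $\fontact S$ with translation length at least $\tau_0$ by \Cref{lem:translengthbound}.

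\emph{Dichotomy via Fujiwara or quasi-line structure.} Using that $G$ acts acylindrically on $\fontact S$ (a standard consequence of the HHG axioms), I would split on whether some pair of generators witnesses non-commensurable axes. If there exist $s, t \in X$ with $tst^{-1} \neq s^{\pm n}$ for every $n \neq 0$, then \Cref{Fujiwara:freeSubgroup} immediately yields a uniform $k_3 = k_3(G, \s)$ so that $\langle s^{k_3}, t s^{k_3} t^{-1}\rangle \cong F_2$. Otherwise, every pair $s, t \in X$ satisfies $tst^{-1} = s^{\pm n}$, and equality of stable translation lengths on $\fontact S$ forces $n = \pm 1$. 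Fixing any $s_0 \in X$, the whole group $G$ then normalizes $\langle s_0 \rangle$ and hence preserves the (unordered) pair of endpoints of the axis of $s_0$ in $\partial \fontact S$; combined with coboundedness of the $G$-action on $\fontact S$, this forces $\fontact S$ to be a quasi-line. Now \Cref{quasilinesNestMinimal}, applied with $U = S$ and with $s_0$ acting by translation, gives $\operatorname{diam}(\fontact V) < \infty$ for every $V \propnest S$, upgraded to a uniform bound by \Cref{rem:uniformbound}. Thresholding the distance formula (\Cref{thm:distformula}) above this uniform bound shows $G$ is quasi-isometric to $\fontact S \simeq \mathbb{R}$, whence $G$ is virtually cyclic.

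\emph{Main obstacle.} The principal subtlety is guaranteeing acylindricity of the $G$-action on $\fontact S$ with constants depending only on $(G, \s)$, since \Cref{Fujiwara:freeSubgroup} needs uniform acylindricity constants; this is a standard property of HHGs but must be cited (or verified) carefully. An alternative route is to replace Fujiwara's theorem by an explicit ping-pong argument along quasi-axes in $\fontact S$, using the uniform translation-length bound $\tau_0$ of \Cref{lem:translengthbound} to control how far short conjugates push the relevant shadows, at the cost of a longer and more hands-on argument.
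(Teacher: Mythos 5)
Your argument is correct and follows the same skeleton as the paper's proof: both first use \Cref{lem:infinitediameter} to place $S$ in $\bar{\mc B}$ and thereby obtain a uniformly short loxodromic $s$ on $\fontact S$, both rely on acylindricity of the action of $G$ on $\fontact S$, and both then split on whether some generator $t$ lies outside the elementary subgroup $E(s)$, applying \Cref{Fujiwara:freeSubgroup} in the affirmative case. The one genuinely different step is the virtually cyclic branch: the paper concludes in one line from Dahmani--Guirardel--Osin (if every generator lies in $E(s)$ then $G\leq E(s)$ is virtually cyclic), whereas you rederive this structurally — all generators preserve the endpoint pair, so $\fontact S$ is a quasi-line by the mechanism of \Cref{prop:quasilines}, all properly nested domains are uniformly bounded by \Cref{quasilinesNestMinimal}, and the distance formula then shows $G$ is quasi-isometric to $\R$. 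Your route is longer but self-contained and reuses the paper's own structural results, so it is a legitimate alternative. Two small points to tighten: the hypothesis of \Cref{Fujiwara:freeSubgroup} is $ts^nt^{-1}\neq s^{\pm n}$ for all $n\neq 0$, not $tst^{-1}\neq s^{\pm n}$; in the torsion-free acylindrical setting the two are equivalent (each says $t\notin E(s)$, and $E(s)\cong\Z$ here, which is also what justifies your reduction to $n=\pm 1$), but this equivalence should be stated rather than elided. And the acylindricity you flag as the main obstacle is exactly what the paper cites, namely \cite[Corollary~14.4]{BehrstockHagenSisto:HHS1}, so no new verification is needed.
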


\begin{proof}
	
Let $U\in \overline{\mc B}$.  Then, by definition, there exists $h\in G$ with $|h|_X\leq N$, a generator  $x\in\genset$, and a domain $W\in\B(x)$ such that $U=h.W$.  As $\fontact W$ has infinite diameter and $h$ acts as an isometry on the associated hyperbolic spaces, $\fontact U$ must have infinite diameter, as well.

 Since $\fontact S$ has infinite diameter by assumption and $U\nest S$, it follows from \Cref{lem:infinitediameter} applied with $\mc U=\overline{\mc B}$ that $S\in\overline{\mc B}$.    By definition, $S \in \overline{\mc B}$ implies that $S = g.V$ for some $g\in G$ with $\abs{g}_X \leq N$ and $V \in \bigset{s}$ for some $s \in X$.  However, hierarchical automorphisms preserve the $\nest$--levels of elements of $\s$, and $S$ is the unique $\nest$--maximal domain in $\s$. Thus, $S=g.V$ if and only if $V$ has the same level as $S$, and we conclude that $V = S$.  This implies that $S\in\bigset{s}$. (In fact, this implies that $S=\bigset{s}$ by \cite[Lemma~6.7]{DurhamHagenSisto:Boundaries}, but we will not need this stronger statement.)
 
	The action of $G$ on $\fontact S$ is cobounded and acylindrical by \cite[Corollary~14.4]{BehrstockHagenSisto:HHS1}.  Let $E(s)$ denote the stabilizer of the endpoints of the axis of $s$ in $\partial \fontact S$.  If for every generator $r\in\genset$ we have $r\in E(s)$, then $G$ is virtually cyclic by \cite[Lemma~6.5]{DahmaniGuirardelOsin}. 
	
	Otherwise, there exists a generator $t \in X\setminus\{s\}$ such that $t\not\in E(s)$, and hence $t$ does not stabilize the endpoints of the axis of $s$ in $\partial\fontact S$. In particular, $|\partial\fontact S|\geq 3$, that is,  $\fontact S$ is a non-elementary hyperbolic space.  

By \cite[Corollary~6.6]{DahmaniGuirardelOsin}, $t\not\in E(s)$ if and only if $ts^nt^{-1} \neq s^{\pm n}$ for any $n \neq 0$.  Therefore, with the above choice of $s$ and $t$, \Cref{Fujiwara:freeSubgroup} guarantees the existence of a constant $k_3$ such that $\abrackets{s^{k_3}, ts^{k_3}t^{-1}} \cong F_2$.
\end{proof}

In particular, the proof of \Cref{prop:SinB} shows that whenever $\fontact S$ has infinite diameter there exist two uniformly short elements which are independent loxodromic elements with respect to the action on $\fontact S$.

We are now ready to prove \Cref{thm:technicalmainthm}.

\begin{proof}[Proof of \Cref{thm:technicalmainthm}]

Consider the finite-index torsion-free subgroup $H$ of $G$.  Then $(H,\s)$ is a normalized HHG by \Cref{lem: passing to f.i.subgroup}, and by \Cref{finiteIndexWordLength}, there is a generating set $X'$ for $H$ all of whose elements have $X$--length at most $2d - 1$, where $d=[G:H]$. This means that if we can prove the desired trichotomy for \(H\), it will follow for \(G\).  Thus, we can and will assume that $G$ is torsion-free.

Let $k_1$ be the constant from \Cref{case:TransverseDomains}, 
$k_2$ and $n_0$  the constants from \Cref{case:ProperNesting},  
$k_3$  the constant from \Cref{prop:SinB}, $\delta$  the hyperbolicity constant of $\fontact U$ for any $U\in \s$, and $\tau_0$  the constant from  \Cref{lem:translengthbound}.  Also let 
\[
	k_4 = \ceil{10000\delta\tau_0^{-1}},
\]
and
\[
	M \geq \max\{k_1, 2n_0+k_2, k_3+2, 3(k_4+2)(N+1)! \}.
\]
 We recall that our goal is to show that one of the following occurs:  \begin{enumerate}[(a)]
 \item \label{itema} \(G\) is virtually abelian; 
\item \label{itemb} there exist two words of length at most \(M\) that generate a free semigroup; or 
\item \label{itemc} \(G\) is quasi-isometric to a product \(\mathbb{Z^{|\overline{\mathcal B}|}}\times E\), where \(E\) has infinite diameter and is not quasi-isometric to \(\mathbb{Z}^n\). 
\end{enumerate}
Let $X$ be an arbitrary generating set for $G$. Then one of the two cases of \Cref{prop: basic combinatorics} must occur. If the hypotheses of Case 1 are satisfied, then (\ref{itemb}) holds by either \Cref{case:TransverseDomains} or \Cref{case:ProperNesting}. So, suppose Case 2 occurs and the set 
\[
	\bar{\mc B} = \genset^N.\mc{B}
\]
defined in (\ref{eqn:Bbar}) is  fixed setwise by \(G\). 

If \(S \in \bar{\mc{B}}\), then $\fontact S$ has infinite diameter, and so (\ref{itema}) or (\ref{itemb}) holds by \Cref{prop:SinB}. If \(S\not\in \bar{\mc{B}}\), then \(\mathrm{diam} (\fontact S) < \infty\) (in particular, it is uniformly bounded) by applying \Cref{lem:infinitediameter} with $\mc U=\overline{\mc B}$. 

By passing to a further finite index subgroup, we can assume
that \(\bar{\mc{B}}\) is fixed pointwise by \(G\). Indeed, consider the subgroup $\hat{G} = \ker(G \to \Sym(\bar{\mc B}) ) $ of index at most $N!$ which fixes \(\bar{\mc{B}}\) pointwise. By \Cref{finiteIndexWordLength}, there is a generating set $Y'$ for $\hat{G}$ all of whose elements have $X$--length at most $2N! - 1$. This means that if we can prove the desired trichotomy for \(\hat{G}\), it will follow for \(G\).
By definition, every domain $U \in \bar{\mc B}$ supports the axis of at least one element in $X^{2N + 1}$.  Observe also that, by \Cref{prop:HHGisomClassification} there is a constant $K$ between 0 and $N!$ such that $g^K \in \hat{G}$.  Expand the generating set for $\hat{G}$ to be
\[
	Y = Y' \bigcup \braces{g^K : g \in X^{2N+1}}.
\]
Elements of $Y$ have $X$--length at most $(2N+1)N! < 3(N+1)!$.
Since each domain of $\bar{\mc{B}}$ was in the big set of some element of $X^{2N+1}$, each domain is also in the big set of some element of $Y$.

For the rest of the proof, we restrict our attention to $\hat G$, which acts on \(\fontact U\) for each \(U \in \bar{\mc{B}}\). For each \(U \in \bar{\mc{B}}\), there exists an element \(s_U\in G\) with $|s_U|_X\leq 2N+1$ that acts loxodromically on \(\fontact U\). Thus $s_U^K\in \hat G$ also acts loxodromically on $\fontact U$, and  $|s_U^K|_Y=1$, by the definition of $Y$.  Let \(s_U^\pm\) be the fixed point of \(s_U^K\) on \(\partial \fontact U\). 
We claim that either (\ref{itemb}) holds or all the generators fix \(\{s_U^+, s_U^-\}\) setwise. Indeed, if \(t\) is an element of $Y$ that does not fix \(\{s_U^+, s_U^-\}\), the conjugate $t^{-1}s_U^Kt$ is an independent loxodromic with respect to the action on $\fontact U$. 
By \Cref{lem:translengthbound} there is a uniform lower bound on the translation length of $s_U^K$ (which is equal to the translation length of $t^{-1}s_U^Kt$) with respect to the action on $\fontact U$.   
Therefore, \Cref{BreuillardFujiwara:freeSemigroup} implies that for $k_4$ defined as above, some pair in $\{(s_U^K)^{\pm k_4},t^{-1}(s_U^K)^{\pm k_4}t\}$ generates a free semigroup, and hence (\ref{itemb}) holds. 

Thus, we may assume that for each \(U \in \bar{\mc{B}}\), the set \(\{s_U^+, s_U^-\}\) is \(\hat G\)--invariant. By  \Cref{prop:quasilines}, we conclude that \(\fontact U\) is a quasi-line for each \(U \in \bar{\mc{B}}\). Let $\overline{\mc B}=\{U_1,\dots, U_n\}$ for some $n$, and let \(\bar{\mf{S}} = \{V \in \mf{S} \mid \mathrm{diam}( \fontact V) = \infty\}\). We claim that \(W \bot U_i\) for each \(W \in \overline{\mf{S}} - \bar{\mc{B}}\) and for all \(i\). To see this, suppose that \(\fontact W\) is unbounded. Then \Cref{lem:infinitediameter} and \Cref{quasilinesNestMinimal} imply that for each $i$, either \(W \bot U_i\) or \(W \pitchfork U_i\). Since \(U_i\) is \(\hat G\)--invariant, by \Cref{prop:G_inv_implies_corasely_surj}, we must have \(W \bot U_i\). 
Thus, we can partition \(\bar{\mf{S}}\) into pairwise orthogonal sets as follows:
\[\bar{\mf{S}} = \{U_1\} \sqcup \cdots \sqcup \{U_n\} \sqcup \left(\bar{\mf{S}} - \bar{\mc{B}} \right).\]

Let $\Omega_{\kappa}^{\s}$ be as in \Cref{sec:coordsys}.
By \Cref{prop:gprod}, we conclude that \(\hat G\) (and therefore $G$) is quasi-isometric to \(\mathbb{Z}^{|\bar{\mc{B}}|} \times \Omega_{\kappa}^{\bar{\mf{S}}- \bar{\mc{B}}}.\)  If $\Omega_{\kappa}^{\bar{\mf{S}}- \bar{\mc{B}}}$ is quasi-isometric to $\Z^m$ for some $m$, then (\ref{itema}) holds.  
Otherwise, (\ref{itemc}) holds with respect to the initial generating set, $\genset$, and $E = \Omega_{\kappa}^{\bar{\mf{S}}- \bar{\mc{B}}}$, completing the proof.  
\end{proof}

\Cref{thm: the non ueg case} follows immediately from the proof of \Cref{thm:technicalmainthm}.

\subsection{Applications} 
\label{subsec:applications}
We begin by proving Corollaries \ref{cor:ac}, \ref{cor:qcx}, and \ref{cor:CSnonelem} from the introduction, whose statements we recall for the convenience of the reader.

\coroAsyCone

\begin{proof}[Proof of \Cref{cor:ac}]
Let $G$ be a non-virtually cyclic virtually torsion-free hierarchically hyperbolic group. 
It follows from \cite[Proposition 1.1]{DMS}  that having a cut-point in an asymptotic cone of $G$ is equivalent to $G$ having super-linear divergence.  However, this cannot occur if $G$ is quasi-isometric to a product with unbounded factors, and therefore $G$ has uniform exponential growth by \Cref{thm:mainthm}.  

The second statement follows from 
\cite[Proposition~3.24: (1)$\iff$(2)]{DMS}, which show that if a geodesic metric space $X$ has an unbounded Morse quasi-geodesic, then every asymptotic cone of $X$ has a cut-point.  
\end{proof}

\coroQuasiConvex

\begin{proof}[Proof of \Cref{cor:qcx}]
Let $G$ be a non-virtually cyclic virtually torsion-free hierarchically hyperbolic group, and let $H\leq G$ be an infinite quasi-convex subgroup of infinite index.  If $G$ is quasi-isometric to a product with unbounded factors, then either $H$ is quasi-isometric to the Cayley graph of $G$ or $H$ has bounded diameter in the Cayley graph of $G$.  In the first case, we reach a contradiction with the fact that $H$ is infinite index, and in the second case we reach a contradiction with the fact that $H$ is infinite.  Then $G$ has uniform exponential growth by \Cref{thm:mainthm}.
\end{proof}

\coroNonelem

\begin{proof}[Proof of \Cref{cor:CSnonelem}]
Let $(G,\s)$ be a virtually torsion-free hierarchically hyperbolic group such that $\fontact S$ is a non-elementary hyperbolic space.  The result follows immediately from  \cite[Theorem~9.14]{DurhamHagenSisto:Boundaries} and \Cref{prop:SinB}.
\end{proof}

We now turn our attention to the quantitative Tits alternative described in \Cref{thm:qtits}.  Under the additional assumption that $(G, \s)$ is hierarchically acylindrical our proof of \Cref{thm:technicalmainthm} can be adjusted to generate free subgroups rather than free semigroups. Hierarchical acylindricity was introduced by Durham, Hagen, and Sisto in \cite{DurhamHagenSisto:Boundaries}
to generalize the following property of mapping class groups: for any subsurface $\Sigma \subseteq S $,  the subgroup $MCG(\Sigma) \leq MCG(S)$ acts acylindrically on domains corresponding to $\Sigma$.  

	To make this precise in the HHG setting, let \[\stab{U} = \braces{g \in G : g^{\diamond}U = U}.\] 
	By definition of HHG, \(\stab{U}\) acts on \(\fontact U\). Let \(K_U\) be the kernel of the action, namely the subgroup \(\{g \in \stab{U} \mid g. x = x \quad \forall x \in \fontact U\}\).
	
\begin{defn}
	A hierarchically hyperbolic group is  \emph{hierarchically acylindrical} if $\stab{U}/ K_U$ acts acylindrically on \(\fontact U\), for all \(U \in \mf{S}\).  
\end{defn}
	
	For example, mapping class groups are hierarchically acylindrical because reducible subgroups of the mapping class group act acylindrically on the curve graph corresponding to a subsurface. Similarly, right-angled Artin groups are also hierarchically acylindrical because parabolic subgroups act acylindrically on the contact graph corresponding to the associated subgraph of the defining graph. 

However, not all hierarchically hyperbolic group structures are hierarchically acylindrical. An example is given by the group constructed by Burger--Mozes (see Example \ref{ex:BM}); for further discussion see \cite{DHS:Corrigendum}.

\propQuanTits

\begin{proof}[Proof of \Cref{thm:qtits}]
	Fix constants as in the proof of \Cref{thm:technicalmainthm}.  The only time that free semigroups are produced in the proof of \Cref{thm:technicalmainthm} is when Item \ref{item: combinatorial cases, fixed set} of \Cref{prop: basic combinatorics} holds and $\fontact S$ is an elementary hyperbolic space.  Equivalently, this occurs when two elements have independent axes in an infinite diameter domain that properly nests into $S$.  In this case, we pass to a subgroup $\hat G$ with finite generating set $Y$ which fixes $\overline{\mc B}$ pointwise, and find elements $s,t \in Y$ such that $s$ and $t^{-1}st$ are independent loxodromic isometries of $\fontact U$ for some $U\propnest S$. 
	By hierarchical acylindricity, $\hat G / K_U$ acts nonelementarily and acylindrically on $\fontact U$. Let \(\bar{s}\) and \(\bar{t}\) be the images of \(s\) and \(t\) in the quotient. 
	Applying \Cref{Fujiwara:freeSubgroup}, there exists a constant $k_5$ such that $\abrackets{\bar{s}^{k_5}, \bar{t}\bar{s}^{k_5}\bar{t}^{-1}} \cong \F_2$ in \(\hat G / K_U\).  Since free groups are Hopfian, this lifts to a free subgroup of $\hat{G}$.  In particular, the constant $M$ in  \Cref{thm:technicalmainthm} can be updated to be
	\[
		M = M \geq \max\{k_1, 2n_0+k_2, k_3+2,3(k_5+2)(N+1)!\}.
		\qedhere
	\]
\end{proof}

\begin{remark}
The proof of  \Cref{thm:qtits} shows that the conclusion of  \Cref{thm:qtits} also holds in slightly more generality.  In particular, it holds for any virtually torsion-free HHG in which Item \ref{item: combinatorial cases, find non orthogonal} of  \Cref{prop: basic combinatorics} holds for every finite generating set $X$.
\end{remark}

\renewcommand{\bar}{\overline}
\newcommand*{\C}{\mathcal{C}}
\newcommand*{\lhalf}[1]{\overleftarrow{#1}}

\newcommand{\hp}[1]{\marginpar{\color{ForestGreen}\tiny #1 -- hp}}

%
\appendix

\section{Uniform exponential growth for cocompactly cubulated groups \\  {\small By Radhika Gupta and Harry Petyt}}

The aim of this appendix is to show that cubical groups that admit a \emph{factor system} (see Section~\ref{section:prelim}) either have uniform exponential growth or are virtually abelian. This extends the work of the main body of the paper, in which this is shown to hold when the cube complex does not split as a direct product (Theorem \ref{thm:mainthm}). The arguments here show that this irreducibility assumption can be dropped. 

\begin{theorem} \label{thm:main}
Let $G$ be a group virtually acting freely cocompactly on a locally finite, finite dimensional CAT(0) cube complex $X$, and assume that $X$ has a factor system. Then either $G$ has uniform exponential growth or $G$ is virtually abelian. 
\end{theorem}

Since the arguments of Section \ref{sec:proof} require that the groups involved are (virtually) torsion-free, we consider free actions only, as any proper action of a torsion-free group on a CAT(0) cube complex is free.
\medskip

The class to which Theorem~\ref{thm:main} applies is very large---it is conjectured that all cocompact cube complexes have factor systems \cite{HagenSusse}. More specifically, it includes: compact special groups \cite{BehrstockHagenSisto:HHS1}; the Burger-Mozes group, and more generally BMW-groups in the sense of \cite{Caprace}; certain Artin groups \cite{haettel:virtually}; and any graph product of these \cite{BerlyneRussel:GraphProd}. We get a new proof of uniform exponential growth for some of these groups:

\begin{itemize}
\item   \emph{Compact special groups}. These groups are linear \cite{haglundwise:special,davisjanuszkiewicz:right}, and hence satisfy a UEG alternative by \cite{EMO} and the Tits alternative \cite{sageevwise:tits}.
\item   \emph{The mapping class group of the genus-two handlebody}. It is a subgroup of the mapping class group of the closed surface of genus two, which is linear by \cite{bigelowbudney, korkmaz:linearity}, and hence has UEG by \cite{EMO}. Note that in this case, linearity didn't come from virtual specialness. This group acts geometrically on a CAT(0) cube complex with a factor system by \cite{miller:stable}.
\item   \emph{Burger--Mozes group}: It acts freely on a 2-dimensional CAT(0) cube complex, so a UEG alternative follows by \cite{KarSageev}. It also appears as Example \ref{ex:BM}, but rather more machinery is needed there. Our proof does not depend on the 2--dimensionality, and we need little technology.
\end{itemize}

\subsubsection*{Acknowledgements}
We are very grateful to Mark Hagen for suggesting the strategy to prove Lemma~\ref{lem:nonelementary}. 

\subsection{Preliminaries} \label{section:prelim}
Let us briefly record a few definitions and recall some lemmas. 

\begin{lemma} \label{lem:product_ueg}
Let $G = G_1 \times G_2$ be a finitely generated group. If $G_1$ has uniform exponential growth, then $G$ has uniform exponential growth. 
\end{lemma}

\begin{proof}
This is a special case of the simple fact that if $G\to H$ is a surjective homomorphism and $H$ has uniform exponential growth, then so does $G$.
\end{proof}
\subsubsection*{CAT(0) cube complexes}
We refer the reader to \cite{sageev:cat(0)} for an introduction to CAT(0) cube complexes and groups acting on them. We highlight two points for later use; also see \cite[\S2]{BehrstockHagenSisto:HHS1}. Firstly, each hyperplane $h$ has two associated combinatorial hyperplanes. These are parallel copies of $h$ in the carrier of $h$ that are as far apart as possible. In particular, they are subcomplexes, unlike $h$ itself. However, like $h$, they are convex. Secondly, if $Y$ is a convex subcomplex of a CAT(0) cube complex $X$ and $x\in X^{(0)}$, then the \emph{gate} of $x$ in $Y$, denoted $\mf g_Y(x)$, is the unique closest vertex of $Y$ to $x$.

The \emph{contact graph} of a CAT(0) cube complex $X$, denoted $\C X$, is defined to be the graph whose $0$--skeleton is the set of hyperplanes of $X$, with an edge $(h,h')$ whenever the carriers of $h$ and $h'$ intersect. See \cite{hagen:geometry,hagen:weak} for more information on contact graphs, including a proof that they are quasitrees, and, in particular, hyperbolic.

\begin{lemma} \label{lem:separation}
Let $h_1$, $h_2, h_3$ be distinct hyperplanes of a CAT(0) cube complex $X$, such that $h_2$ separates $h_1$ from $h_3$. Then any path $P$ in $\C X$ from $h_1$ to $h_3$ passes through the 1--neighbourhood of $h_2$.
\end{lemma}

\begin{proof}
The hyperplanes $h_1$ and $h_3$ lie in different components of $X\smallsetminus h_2$, so $P$ must contain a hyperplane whose carrier intersects the carrier of $h_2$. 
\end{proof}

Note that an action on a CAT(0) cube complex induces an action on the contact graph.

\begin{lemma}[Double-skewering, {\cite[p.853]{capracesageev:rank}, \cite[Lem.~2.11, Cor.~4.5]{hagen:large}}] \label{lem:skew}
Suppose $G$ acts essentially cocompactly on a locally finite, finite dimensional CAT(0) cube complex $X$. If $w$ and $v$ are disjoint hyperplanes with halfspaces $\lhalf w\subsetneq\lhalf v$, then there is a hyperbolic isometry $g\in G$ such that $\lhalf{gv}\subsetneq\lhalf w$. Moreover, if $\dist_{\C X}(w,v)>2$, then $g$ can be taken to act loxodromically on $\C X$.
\end{lemma}

The contact graph comes with a coarsely Lipschitz projection map from $X$ to $\C X$ that sends each point $x\in X$ to the diameter--$1$ subset of $\C X$ consisting of all hyperplanes whose carriers contain $x$. More generally, if $Y$ is a convex subcomplex of $X$, then projection from $X$ to $\C Y$ is defined as the composition of the projection $Y\to\C Y$ with $\mf g_Y$.

\subsubsection*{Factor systems} 
A \emph{factor system} for a CAT(0) cube complex $X$ is the data of a collection of convex subcomplexes satisfying certain conditions. Factor systems were introduced in \cite{BehrstockHagenSisto:HHS1}, and any factor system for $X$ gives it the geometry of a hierarchically hyperbolic space. In general, $X$ may have many different factor systems, which allow for greater flexibility. However, if $X$ has a factor system, then it always has a ``simplest'' factor system, called the \emph{hyperclosure} \cite{HagenSusse}. Therefore, when $X$ has a factor system we shall assume that it is the hyperclosure of $X$. 

\begin{definition}[Hyperclosure]
For a CAT(0) cube complex $X$, the hyperclosure of $X$ is the intersection $\cal F$ of all sets $\cal F'$ of convex subcomplexes of $X$ satisfying the following conditions.
\begin{itemize}
\item   $\cal F'$ contains $X$ and every combinatorial hyperplane of $X$. 
\item   If $Y,Y'\in\cal F'$, then $\mf g_Y(Y')\in\cal F'$.
\item   If $Y\in\cal F'$ and $Y'$ is parallel to $Y$, in the sense that any hyperplane crossing one crosses both, then $Y'\in\cal F'$.
\end{itemize}
\end{definition}

If there exists an $N>0$ such that for all $x \in X$, there are at most $N$ subcomplexes $F \in \cal F$ with $x \in F$, then the hyperclosure $\cal F$ is a factor system. Moreover, if $\cal F$ is not a factor system, then $X$ does not have any factor system \cite[Rem.~1.15]{HagenSusse}.  It is not known whether the hyperclosure can fail to be a factor system when $X$ admits a proper cocompact group action.

Given a factor system for $X$, there is an associated hierarchically hyperbolic structure on $X$. This structure includes a hyperbolic space $\hat\C Y$ for each convex subcomplex $Y$ in the factor system. The space $\hat\C Y$ is obtained from the contact graph $\C Y$ by coning off certain subgraphs, so any projection to $\C Y$ induces a projection to $\hat\C Y$. (Note that the notation here disagrees slightly with that of the main body of the paper, where the hyperbolic spaces associated to a hierarchically hyperbolic space are always denoted $\C Y$. For cube complexes it is standard to reserve that notation for contact graphs.)

When the factor system is the hyperclosure of $X$, the hyperbolic space $\hat\C X$ associated to $X$ is quasi-isometric to the contact graph $\C X$ \cite[Rem.~8.18]{BehrstockHagenSisto:HHS1}.  More generally, if $X=\prod_{i=1}^nX_i$ is a direct product, then elements $Y$ of the hyperclosure come in two types: either $Y$ is a parallel copy of a subcomplex of some $X_i$, or $Y$ is the direct product of (the elements of) a subset of $\{X_i:1\le i\le n\}$. In the latter case, $\hat\C Y$ is a cone-off of a bounded graph, and hence is bounded. In other words, the hierarchical structure of $X$ is obtained by taking the disjoint union of the structures of the $X_i$ and introducing a finite number of elements whose associated hyperbolic spaces are bounded.

\subsection{Main result}
In what remains we shall prove Theorem~\ref{thm:main}. We begin with a specialisation of Theorem \ref{thm:technicalmainthm} to a certain kind of cocompactly cubulated group. 

\begin{prop} \label{prop:ueg}
Let $X=\prod_{i=1}^n X_i$ be a direct product of finite dimensional CAT(0) cube complexes. If $X$ has a factor system and the contact graph of each $X_i$ is unbounded but not quasiline, then any group acting freely cocompactly on $X$ has uniform exponential growth. 
\end{prop}

\begin{proof}
As discussed in Section~\ref{section:prelim}, any element of the hyperclosure $\cal F$ of $X$ whose associated hyperbolic space is unbounded is a subcomplex of some $X_i$. Moreover, $\hat{\C} X_i$ is not a quasiline, as it is quasi-isometric to $\C X_i$. Any group $G$ acting properly cocompactly on $X$ by cubical automorphisms is a hierarchically hyperbolic group, and the hierarchy structure of $G$ is the same as that of $X$. Note that $\{X_1,\dots,X_n\}$ is necessarily $G$--invariant. 

Let $S$ be a generating set of $G$, and let $N=\dim X$. Recall from  Section \ref{sec:proof} that $\bar{\cal B}=S^N\cal B\subset\cal{ F}$, where $\mc{B}=\bigcup_{s\in S}\B(s)$. By  Propositions~\ref{prop: basic combinatorics}, \ref{case:TransverseDomains}, and~\ref{case:ProperNesting}, either $G$ has uniform exponential growth, or $\bar{\cal B}$ is a $G$--invariant set of subcomplexes such that $\prod_{B\in\bar{\cal B}}B$ is a subcomplex of $X$, and each $\hat\C B$ is unbounded. In this latter case, $\bar{\cal B}\subset\{X_1,\dots,X_n\}$ by Lemma~\ref{lem:infinitediameter}, so none of the $\hat\C B$ are quasilines. This suffices, because the argument of Theorem~\ref{thm:technicalmainthm} shows that if none of the $\hat\C B$ are quasilines, then $G$ has uniform exponential growth. 
\end{proof}

\begin{lemma} \label{lem:nonelementary}
Suppose that a group $G$ acts essentially and cocompactly on a locally finite, finite dimensional, irreducible CAT(0) cube complex $Y$. Then the contact graph $\C Y$ of $Y$ is unbounded, and if $\C Y$ is a quasiline, then so is $Y$.
\end{lemma}
\begin{proof}
The contact graph $\C Y$ is unbounded by \cite[Cor.~4.7]{hagen:large}. Suppose that $\C Y$ is a quasiline. Let $h$ and $h'$ be two hyperplanes of $Y$ that are at distance at least 3 in $\C Y$. The ``moreover'' statement of Lemma~\ref{lem:skew} provides a corresponding hyperbolic isometry $g\in G$ that acts loxodromically on $\C Y$. Perhaps after subdividing $Y$, an application of \cite[Thm~1.4]{haglund:isometries} shows that $g$ is combinatorially hyperbolic on $Y$, and is therefore rank-one by \cite[Thm~4.1]{hagen:large}. Let $A$ be an axis of $g$ in $Y$, and let $C$ be the cubical convex hull of $A$. By \cite[Lem.~4.8]{hagen:large}, $C$ is at finite Hausdorff distance from $A$, and thus it is a quasiline. Moreover, since $g$ is loxodromic on the quasiline $\C Y$, the projection of $C$ to $\C Y$ is coarsely onto. 

It suffices to show that $C=Y$. Since $C$ is convex, it is enough to prove that every hyperplane of $Y$ crosses $C$. Suppose $h_1$ is a hyperplane of $Y$ that does not cross $C$. By the ``implication $(3)\implies(4)$'' part of the proof of \cite[Thm~4.1]{hagen:large}, the gate $\mf g_C(h_1)$ of $h_1$ to $C$ has uniformly bounded diameter. Since the projection map $Y\to\C Y$ is coarsely Lipschitz and since $\C Y$ is a quasiline, there is a ball $B\subset \C Y$ of uniformly bounded diameter that contains the projection of $\mf g_C(h_1)$ and disconnects $\C Y$. 

Let $h_2$ and $h_3$ be hyperplanes of $Y$ that cross $C$, have $\dist_{\C Y}(h_i,B)>1+\diam(B)$, and lie on opposite sides of $B$ inside $\C Y$. Since $h_1$ does not cross $C$, it cannot separate $h_2$ and $h_3$. It follows from Lemma~\ref{lem:separation} that neither $h_2$ nor $h_3$ separates the other from $h_1$. Thus, the $h_i$ form a facing triple that are pairwise at distance greater than 3 in $\C Y$. By Lemma~\ref{lem:skew} (Double skewering), there exists an isometry $g$ that acts loxodromically on $\C Y$ and has the property that $h_1$ separates $g^nh_2$ from $h_2$, and hence from $h_3$, for all positive $n$. From Lemma~\ref{lem:separation}, we see that $\C Y$ must have at least three ends, a contradiction. Thus every hyperplane of $Y$ crosses $C$, so $Y=C$.
\end{proof}

We are now in a position to prove Theorem~\ref{thm:main}. We restate it for convenience, minus the word ``virtually'', as that generalisation is immediate from \cite{ShalenWagreich}.

\numberedtheorem{Theorem~\ref{thm:main}}{}{Let $G$ be a group acting freely cocompactly on a locally finite, finite dimensional CAT(0) cube complex $X$, and assume that $X$ has a factor system. Then either $G$ has uniform exponential growth or $G$ is virtually abelian}

\begin{proof}
By passing to the essential core, we may assume that $X$ is essential \cite[\S3.2]{capracesageev:rank}, which implies that the action of $G$ on $X$ is essential, by cocompactness. Now, by \cite[Prop.~2.6]{capracesageev:rank}, there is a decomposition $X=X^1\times X^2$, where $X^1=\prod_{i=1}^nX_i$ is a product of irreducible non-euclidean complexes, and $X^2$ is euclidean. Since $X$ is essential, if $X^1$ is bounded then it is trivial. In this case, $G$ acts properly cocompactly on a flat, and so is virtually abelian.

Otherwise, \cite[Cor.2.8]{nevosageev:poisson} provides a finite index subgroup of $G$ that splits as $G_1\times G_2$, where $G_j$ acts properly cocompactly on $X^j$. By Lemma~\ref{lem:product_ueg}, it suffices to show that $G_1$ has uniform exponential growth, and by \cite{ShalenWagreich} we may pass to a finite index subgroup $G_1'$ of $G_1$ that fixes the factors of $X^1$. According to Lemma~\ref{lem:nonelementary}, the contact graph of each $X_i$ is unbounded but not a quasiline. The conditions of Proposition~\ref{prop:ueg} are therefore met by $X^1$, on which $G_1'$ is acting properly cocompactly. Thus $G_1'$ has uniform exponential growth, completing the proof.
\end{proof}



\bibliographystyle{alpha}
\bibliography{researchbib}


\end{document}